\renewcommand{\C}{\mathcal{C}}
\renewcommand{\S}{\mathcal{S}}
\renewcommand{\P}{\mathcal{P}}
\newcommand{\cht}{\text{coht}}
\let\svwidehat\widehat
\renewcommand\widehat[1]{\ifx\relax#1\relax\svwidehat{\phantom{x}}\else\svwidehat{#1}\fi}
\renewcommand{\hat}{\widehat}
\title{Structure of Spectra of Precompletions}
\author{Erica Barrett, Emil Graf, S. Loepp, Kimball Strong, and Sharon Zhang}
\begin{document}
\maketitle

\begin{abstract}
    Let $T$ be a complete local (Noetherian) ring, and let $A$ be a local subring of $T$ such that the completion of $A$ with respect to its maximal ideal is $T$. We investigate the possible structures of the partially ordered set $\Spec (A)$. Specifically, we explore the minimal prime ideals of $A$ and their formal fibers, the maximal chains of prime ideals in $A$, and the number of prime ideals in $A$ containing combinations of minimal prime ideals of $A$.
\end{abstract}

\section{Introduction}
\par Complete local rings are fully characterized by Cohen's Structure Theorem, but we have a poor understanding of local rings which are not complete. Thus, the relationship between a local ring and its completion is of prime importance in understanding the ring itself.
In studying this relationship, we start with a complete local ring $T$ and explore the properties of rings whose completion is $T$. In other words, we ask: given a complete local ring $T$, when does there exist a local ring $A$ such that the completion of $A$ is $T$ and $A$ satisfies a specific property?
We call this local ring $A$ a ``precompletion" of $T$. Note that a particular complete local ring $T$ can, and often does, have multiple precompletions, and each of these precompletions can satisfy a variety of different properties.
\par The question of precompletion properties has already been answered for some particular properties. For example, in \cite{lech}, Lech characterizes complete local rings that are completions of domains, and in \cite{heitmann93}, Heitmann does the same for completions of Unique Factorization Domains. In this paper, we focus on properties concerning the prime spectrum of a precompletion $A$ of $T$.
\par The prime spectrum (or simply spectrum) of any ring can be viewed as a partially-ordered set with respect to inclusion. 
We can (informally) organize the structure of the spectrum in two ways---\textit{horizontally}, which focuses on those prime ideals which are incomparable, and \textit{vertically}, which focuses on chains of prime ideals that contain one another. 
For instance, the cardinality of $\Min(A)$ is a horizontal property, whereas dimension is vertical. 
With these structural aspects in mind, we explore the following type of question: given a complete local ring $T$ and a potential property of a partially ordered set, when is there a precompletion $A$ of $T$ such that $\Spec(A)$, when viewed as a partially ordered set, has this property?  
For example, if $T$ is the complete local ring $\frac{\mathbb{Q}[[x,y,z]]}{(xyz)}$, we ask if $T$ can be the completion of a local ring $A$ such that $A$ has exactly two minimal prime ideals $q_1$ and $q_2$, and exactly two height one prime ideals that contain both $q_1$ and $q_2$.  In fact, in example Example \ref{standardexample}, we show that such a ring $A$ exists, and that the spectrum of the ring $A$ can be countable or it can be uncountable.
\par First, we study the horizontal structure of spectra. Given a local ring $R$ and prime ideal $P$ of  $R$, we will consider the formal fiber of $R$ at $P$ to be the set of prime ideals $Q \in \Spec(\hat{R})$ such that $Q \cap R = P$ (here $\hat{R}$ denotes $I$-adic completion for some ideal $I$ of $R$). While this is not quite the technical definition of a formal fiber, it suffices for our purposes.
We build off of results from \cite{small09} and \cite{byron} to examine the formal fibers of certain prime ideals of precompletions, paying particular attention to the formal fibers of the minimal prime ideals. We show the existence of an upper bound on the cardinalities of specific sets of prime ideals in the spectra of possible precompletions. Moreover, by adapting the constructions in \cite{small09} and \cite{byron}, we construct a precompletion in which this upper bound is achieved.
\par In the subsequent section, we examine the vertical structure of precompletion spectra. Specifically, we consider the relationship between the lengths of maximal chains of prime ideals in $\Spec(T)$ and the lengths of maximal chains of prime ideals in $\Spec(A)$, where $A$ is a precompletion of $T$.
Referencing the work of \cite{small17} and the construction from the previous section, we show that the possible lengths of maximal chains in $\Spec(A)$ are determined by the lengths of maximal chains in $\Spec(T)$ and that we can construct a precompletion of $T$ with maximal chains having lengths equal precisely to the lengths of the maximal chains in $\Spec(T)$.
\par All rings we consider are commutative with unity element $1$.  Throughout this paper, ``local" describes a Noetherian ring with exactly one maximal ideal, and ``quasi-local" describes a ring (not necessarily Noetherian) with one maximal ideal. The ordered pair $(A, M)$ denotes a local ring $A$ with maximal ideal $M$.  The specific ordered pair $(T,M)$ denotes a complete local ring $T$ with maximal ideal $M$.  Finally, if $(A, M)$ is a local ring, we use $\widehat{A}$ to denote the completion of $A$ at $M$.

\section{Preliminaries}

In Sections Two and Three, we extend a result from \cite{byron} that generalizes results from \cite{small09}. Both results in  \cite{byron} and \cite{small09} are concerned with prescribing the formal fibers of the minimal prime ideals of a reduced local ring $A$ whose completion is $T$. We generalize this result so that $A$ need not be reduced, though in doing so some of our conditions devolve from ``necessary and sufficient'' to simply ``sufficient.'' 
\par The setting of the main result consists of a complete local ring $T$, a set of prime ideals $\C$ of $T$ and a partition $\{ \C_i \}_{i = 1}^{m}$ on $\C$ that simultaneously partitions the minimal prime ideals of $T$.
We then find a precompletion $A$ of $T$ such that, for all $i$ and for all $P,P' \in \C_i$, we have $P \cap A = P' \cap A$ and $P \cap A$ is a minimal prime ideal of $A$.
In addition, our construction allows for some control of the prime ideals (of height at least one) containing various combinations of minimal prime ideals.
\par Throughout this section and the next one we will use the following example to illustrate many concepts we introduce. \\

\begin{ex} \label{spec diagram ex}
We consider the complete local ring $\frac{\mathbb{Q}[[x,y,z]]}{(xyz)}$, and view its prime spectrum as a partially ordered set:
\begin{center}
    \begin{tikzpicture}[scale = 1.3]
        \node[label=above:{$(x,y,z)$}] (top) at (9, 3) {};
        
        \node[shape=rectangle,draw=black,minimum size=0.5cm,label=center:{$\mathfrak{c}$}] (mid1) at (7, 1) {};
        \node[shape=rectangle,draw=black,minimum size=0.5cm,label=center:{$\mathfrak{c}$}] (mid2) at (9, 1) {};    \node[shape=rectangle,draw=black,minimum size=0.5cm,label=center:{$\mathfrak{c}$}] (mid3) at (11, 1) {};
        
        \node[shape=rectangle,draw=black, minimum width=1.0cm, minimum height=.5cm,label=center:{\small$(x,y)$}] (shared1) at (8,1) {};
        \node[shape=rectangle,draw=black,minimum width=1.0cm,minimum height = .5cm, label=center:{\small$(y,z)$}] (shared2) at (10,1) {};
        \node[shape=rectangle,draw=black,minimum width=1.0cm, minimum height=.5cm, label=center:{\small{}$(x,z)$}] (shared3) at (12,1) {};
        
        \node[label=below:{\small$(x)$}] (min1) at (7, -1) {};
        \node[label=below:{\small$(y)$}] (min2) at (9, -1) {};
        \node[label=below:{\small$(z)$}] (min3) at (11,-1) {};
        
        \foreach \n in {top,min1,min2,min3}
            \node at (\n)[circle,fill,inner sep=1.5pt]{};
        
        \foreach \n in {shared1,shared2,shared3,mid1,mid2,mid3}
        \path [-] (top) edge node[left] {} (\n);
        
        \path [-] (mid1) edge node[left] {} (min1);
        \path [-] (mid2) edge node[left] {} (min2);
        \path [-] (mid3) edge node[left] {} (min3);
        
        \path [-] (shared1) edge node[left] {} (min1);
        \path [-] (shared1) edge node[left] {} (min2);
        
        \path [-] (shared2) edge node[left] {} (min2);
        \path [-] (shared2) edge node[left] {} (min3);
        
        \path [-] (shared3) edge node[left] {} (min3);
        \path [-] (shared3) edge node[left] {} (min1);
    \end{tikzpicture}
\end{center}
Here the boxes denote collections of prime ideals; e.g.~there are $\mathfrak{c}$ (the cardinality of the real numbers) prime ideals $P$ with $( x ) \subsetneq P \subsetneq (x,y,z)$, there are exactly two prime ideals which contain both $(x)$ and $(y)$ (the maximal ideal and $(x,y)$), etc.
\end{ex}

\par In both \cite{small09} and \cite{byron}, the authors use what they call a \textit{feasible partition} to control the formal fibers of the minimal prime ideals of $A$. 
We alter their definition slightly. We require that conditions (i) and (ii) are satisfied for elements of $\Min(T)$ instead of elements of $\Ass(T)$, and therefore refer to our partition as a ``minfeasible partition." In addition, the definition is modified (with respect to \cite{small09}) such that the set $\C$ is allowed to be countably infinite. 
In \cite{byron}, it is shown that all the lemmas still hold when $\C$ is countably infinite, so we only have to consider the first modification.

\par Note that in \cite{small09} and \cite{byron}, the authors only consider partitions with $m \ge 2$ subcollections because the case of one subcollection was already fairly studied (domains being the main example), but all the proofs work the same with one subcollection except for a trivial modification to the proof of Lemma \ref{Byron 2.12}, and one subcollection is useful for our work, so we will allow this.

\begin{dfn}\label{minfeasible}
Let $(T, M)$ be a complete local ring. Let $\C$ be a countable collection of incomparable non-maximal prime ideals of $T$, which we partition into $m \geq 1
$ subcollections $\C_1, \ldots, \C_m$. We call $\mathcal{P} = (\C, \{ \C_i \}_{i = 1}^{m})$ a \textbf{minfeasible partition on $\C$} (or simply a \textbf{minfeasible partition}) if, for each $Q$ in $\Min(T)$, $\mathcal{P}$ satisfies the following conditions:
\begin{enumerate}[label={(\roman*)}]
    \item $Q \subseteq P$ for at least one $P \in \C$.
    \item There exists exactly one $k$ such that, whenever $Q \subseteq P$ with $P \in \C$, we have $P \in \C_k$.
\end{enumerate}
We will say that a prime ideal $\overline{Q} \in \Spec(T)$ is \textbf{under} $\C_i$ if $\overline{Q} \subseteq P $ for some $P \in \C_i$, and similarly $\overline{Q} \in \Spec(T)$ is \textbf{over} $\C_i$ if $\overline{Q} \supsetneq P$ for some $P$ under $\C_i$ and if in addition $\overline{Q}$ itself is not under $\C_i$. 
\end{dfn}
\noindent Conditions (i) and (ii) of Definition \ref{minfeasible} can be rephrased as ``each $Q$ in $\Min(T)$ is under exactly one $\C_k$.'' In this way, our partition on $\C$ also partitions the minimal prime ideals of $T$.

\begin{ex} \label{minfeasible ex}
Using the complete local ring $\frac{\mathbb{Q}[[x,y,z]]}{( xyz )}$ from Example \ref{spec diagram ex}, we can create a minfeasible partition $\mathcal{P} = (\C, \{ \C_1, \C_2 \})$, where $\C =  \{ ( x ), ( y ), ( z ) \}$, $\C_1 =  \{ ( x ), ( y ) \}$, and $\C_2 =  \{( z ) \}$.
\par In this example, the prime ideal $(x, y)$ is over $\C_1$ but not over $\C_2$, and the prime ideals $(x, z)$ and $(y, z)$ are both over $\C_1$ and $\C_2$. The minimal prime ideals $(x)$ and $(y)$ are under $\C_1$, and the minimal prime ideal $(z)$ is under $\C_2$.
\end{ex}

Given a minfeasible partition, we want to construct a precompletion $A$ of $T$ such that $A$ has exactly $m$ minimal prime ideals $q_1,q_2, \ldots ,q_m$ and such that if $P \in \Spec(T)$ is under $\C_i$ then $P \cap A = q_i$.  A Min-T subring of $T$ is a subring that satisfies our desired intersection properties for elements of our minfeasible partition. Of main importance to us will be junior Min-T subrings, which are Min-T subrings with an additional cardinality restriction.
We will construct the precompletion $A$ as a junior Min-T or Min-T subring of $T$. Note that in order to control the minimal prime ideals in $A$, we will at minimum need a subring for which conditions (ii) and (iii) in the following definition hold. \\

\begin{dfn}\label{Min-T subring}
    Let $(T, M)$ be a complete local ring, and $(\C, \{ \C_i \}_{i = 1}^m)$ a minfeasible partition. A \textbf{semi-junior Min-T subring} (or semi-JMT subring) of $T$ is a quasi-local subring $(R, R \cap M)$  of $T$ such that the following conditions hold:
    \begin{enumerate}[label={(\roman*)}]
        \item $R$ is infinite
        \item For each $P \in \C$, $P \cap R = Q \cap R$ for any $Q \in \Min(T)$ with $Q \subseteq P$
        \item For each $\C_i$, if $P, P' \in \C_i$, then $P \cap R = P' \cap R$
        \item If $\C_i \neq \{ Q^*\}$ where $Q^* \in \Min(T)$, then for each $P \in \C_i$ and $r \in P \cap R$, we have $\Ann_T(r) \nsubseteq P$ 
        \item $|R| < |T|$
    \end{enumerate}
    A semi-JMT subring of $T$ is called a \textbf{junior Min-T subring} of $T$, or JMT subring, if it also satisfies the converse of condition (iii) above. 
    That is, for any $P, P' \in \C$, if $P \cap R = P' \cap R$ then $P, P' \in \C_i$ for some $i$. 
    Throughout this paper, we will usually consider this property as a part of condition (iii) in the definition of a JMT subring.
    Lastly, if $R$ satisfies properties (i)-(iv), but not (v), of the definition of a JMT subring, then we call $R$ simply a \textbf{Min-T subring} of $T$.
\end{dfn}

Since the definitions of minfeasible partitions, JMT subrings and semi-JMT subrings are very similar to the definitions of feasible partitions, SIP-subrings and semi-SIP-subrings in \cite{small09}, many of our lemmas are analogs of lemmas in \cite{small09} and \cite{byron} and can be proved in the exact same way, occasionally substituting $\Min(T)$ for $\Ass(T)$ where appropriate. 
There are two differences we need to consider. One is that we allow $\Ann_T(r) \subseteq P$ for $r \in P \cap R$ if $\C_i = \{Q^*\}$ for some $Q^* \in \Min (T)$. This modification is trivial, as condition (iv) is only used to ensure that prime ideals under a certain $\C_i$ have the same intersection with our JMT subrings as we adjoin more elements; since there is only one prime ideal under such a $\C_i$, this will always be the case (we will still note this when necessary, specifically in Lemma \ref{Byron 2.8}). 
The only other difference is that there may be $Q \in \Ass(T)$ such that $Q \nsubseteq P$ for all $P \in \C$, an event which is allowed in our construction but violates the definition of a feasible partition in \cite{small09} and \cite{byron}. We will therefore need to consider reproving lemmas in \cite{small09} and \cite{byron} that use the following condition: 
\[
  \text{For all } Q \in \Ass(T), \, Q \subseteq P \text{ for some } P \in \C.\tag{$\ast$}
\]
The main consequence of this change is that Min-T subrings are not necessarily reduced. \\

\begin{rmk}
Note that given some $\C_k$ in a minfeasible partition $\{ \C, \{ \C_i \}_{i = 1}^m \}$ and any semi-JMT or JMT subring $A$ (or any subring satisfying conditions (ii) and (iii) of Definition \ref{Min-T subring}), it is true that $Q \cap A = Q' \cap A$ for any $Q, Q'$ under $\C_k$. 
In order to simplify our proofs, we maintain the following abuse of notation found in \cite{small09} and \cite{byron}: given any prime ideal $P$ of $T$, we write $P \cap A \supseteq \C_k \cap A$ in place of $P \cap A \supseteq Q \cap A$ for all $Q$ under  $\C_k$.
\end{rmk}

\par The following two lemmas describe restrictions on the partial order of prime ideals in $\Spec(T)$ with respect to inclusion, given their relationship in a local ring $A$ where $\widehat{A} = T$. \\

\begin{lem}\label{prelim}
Let $(T, M)$ be the completion of a local ring $(A, A \cap M)$ and $P$ a prime ideal of $A$. Then for $Q \in \Min(PT)$, we have $Q \cap A = P$.
\end{lem}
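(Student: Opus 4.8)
The plan is to reduce the statement to the standard fact that completion behaves well with respect to faithful flatness and contraction of primes. First I would record the two inclusions we need: since $P \subseteq PT$ and $PT \subseteq Q$, contracting back to $A$ gives $P = P T \cap A \subseteq Q \cap A$, so the nontrivial direction is $Q \cap A \subseteq P$. For that, the key input is that $T = \widehat{A}$ is faithfully flat over $A$, so the going-down property holds for the map $A \to T$, and more importantly $PT \cap A = P$ for every prime (indeed every ideal that is its own radical, but we only need primes) — this is the standard ``$\widehat{A}$ faithfully flat implies $I\widehat{A}\cap A = I$'' applied to $I = P$.

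The main step is then a localization argument at $P$. Consider the multiplicative set $S = A \setminus P$. Localizing the inclusion $A \to T$ at $S$ gives a flat map $A_P \to S^{-1}T$, and $S^{-1}T$ is a (nonzero) ring in which $PA_P$ generates an ideal contained in the Jacobson-type radical coming from $P$; more concretely, the primes of $T$ lying over $P$ correspond exactly to the primes of $S^{-1}T$, and among those, the ones minimal over $PS^{-1}T$ correspond to $\Min(PT)$ with the property that they contract into $P$. The cleanest phrasing: if $Q \in \Min(PT)$ but $Q \cap A \supsetneq P$, pick $a \in (Q\cap A)\setminus P$; then $a$ is a unit in $A_P$, hence in $S^{-1}T$, yet $a \in Q S^{-1} T$, which would force $Q S^{-1}T = S^{-1}T$, i.e. $Q$ meets $S = A\setminus P$ — contradicting $Q \cap A \subseteq$ ... wait, that is circular.

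So instead I would argue directly: let $a \in Q \cap A$; we want $a \in P$. Since $Q$ is minimal over $PT$, in the Noetherian ring $T$ there is some $n$ with $Q^{n} \subseteq$ (the $PT$-primary component), and more usefully, $a$ is nilpotent in $T_Q/PT_Q$, so $a^{k} \in PT$ for some $k$ after possibly multiplying by an element of $T \setminus Q$ — this is where I must be careful. The robust route avoids minimality subtleties: it suffices to show $Q \cap A \subseteq P$ for any prime $Q$ of $T$ with $PT \subseteq Q$ and $Q$ minimal with this property, and here I invoke that $\operatorname{Spec}(T) \to \operatorname{Spec}(A)$ restricted to $V(PT) \to V(P)$ is a closed surjection whose generic fibers over the minimal primes of $A/P$ — no. Let me just commit to the flat local-ring argument: base change $A \to T$ to $\kappa(P) = A_P/PA_P \to \kappa(P)\otimes_A T = S^{-1}T / P S^{-1}T =: \bar T$. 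This $\bar T$ is nonzero (faithful flatness), and $\operatorname{Spec}\bar T$ is exactly the fiber over $P$, whose elements are the primes $Q'$ of $T$ with $Q' \cap A = P$. The minimal primes of $PT$ that I care about are those $Q$ with $Q\cap A = P$; the content of the lemma is that *every* $Q \in \Min(PT)$ has this property, which follows because any minimal prime of $PT$ restricts to a prime containing $P$, and by faithful flatness a minimal prime over $PT$ cannot contract to something strictly larger than $P$ — precisely, $Q \cap A \supseteq P$ and $Q\cap A$ is a prime of $A$; if it were $\supsetneq P$, then $Q$ would contain $(Q\cap A)T \supsetneq PT$, and since $(Q \cap A)T$ still has a minimal prime $Q'' \subseteq Q$ lying over $Q \cap A \supsetneq P$ by the same faithful-flatness contraction fact, we get $PT \subseteq Q'' \subsetneq Q$, contradicting minimality of $Q$ over $PT$.

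The main obstacle is exactly pinning down that last chain: showing $(Q \cap A)T \neq PT$ strictly enough to produce $Q''\subsetneq Q$, which is where I use that $IT \cap A = I$ for ideals $I$ of $A$ (faithful flatness), applied to $I = Q\cap A$ versus $I = P$ — if $(Q\cap A)T = PT$ then contracting gives $Q \cap A = P$, done; otherwise $PT \subsetneq (Q\cap A)T \subseteq Q$ and any minimal prime $Q''$ of $(Q\cap A)T$ inside $Q$ lies strictly below $Q$ (it cannot equal $Q$ since $Q$ is minimal over the smaller ideal $PT$), contradiction. I expect the write-up to be short once the faithful-flatness lemma ($IT\cap A = I$, valid since $\widehat A$ is faithfully flat over $A$) is cited; the only care needed is to state that a minimal prime of $IT$ contracts to a minimal prime of $I$ over $A$, or just to run the contradiction above without naming fibers.
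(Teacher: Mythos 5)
The easy inclusion $P \subseteq Q \cap A$ and the faithful-flatness fact $IT \cap A = I$ are fine, but the final contradiction does not go through. You take $Q \in \Min(PT)$, assume $Q \cap A \supsetneq P$, observe $PT \subsetneq (Q\cap A)T \subseteq Q$, choose $Q''$ minimal over $(Q\cap A)T$ inside $Q$, and claim $Q'' \subsetneq Q$ ``since $Q$ is minimal over the smaller ideal $PT$.'' That reasoning is backwards: minimality of $Q$ over $PT$ says exactly that no prime sits strictly between $PT$ and $Q$, so from $PT \subseteq (Q\cap A)T \subseteq Q'' \subseteq Q$ you are forced to conclude $Q'' = Q$, not $Q'' \subsetneq Q$. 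No contradiction appears, and the argument collapses at precisely the step you flagged as the ``main obstacle.''

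What is missing is going-down for the flat map $A \to T$, which you mention in passing early on but never actually use. Apply going-down to the chain $P \subsetneq Q \cap A$ in $\Spec(A)$ and the prime $Q$ lying over $Q\cap A$: this yields a prime $Q'$ of $T$ with $Q' \subseteq Q$ and $Q' \cap A = P$. Then $Q' \neq Q$ because their contractions differ, and $PT = (Q'\cap A)T \subseteq Q'$, so $PT \subseteq Q' \subsetneq Q$ contradicts the minimality of $Q$ over $PT$. The paper packages the same idea differently: it passes to $A/P \hookrightarrow T/PT = \widehat{A/P}$, notes that the minimal primes of $T/PT$ are the images of the $Q \in \Min(PT)$, and uses that minimal primes of a faithfully flat extension of the domain $A/P$ contract (by going-down) to the unique minimal prime $(0)$ of $A/P$. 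Either route works, but going-down is essential; the contraction identity $IT \cap A = I$ on its own is not enough to rule out $Q \cap A \supsetneq P$.
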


\begin{proof}
Consider the domain $A/P$, and note that $\widehat{A/P} = T/PT$. The minimal prime ideals of $T/PT$ are precisely the images of $Q \in \Min(PT)$ in $T/PT$, and their intersection with $A/P$ must be the zero ideal of $A/P$. Thus $Q \cap A = P$ for all $Q \in \Min(PT)$.
\end{proof}

\begin{lem}\label{General Poset Property}
Let $A$ be a local ring, with $\widehat{A} = T$. For any $P_1, P_2 \in \Spec(T)$, $P_1 \cap A \supseteq P_2 \cap A$ if and only if $P_1$ contains a prime ideal $Q$ of $T$ such that $Q \cap A = P_2 \cap A$.
\end{lem}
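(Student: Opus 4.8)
The plan is to prove the two directions separately; the reverse implication is immediate, and the forward implication is where Lemma~\ref{prelim} does the real work.

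For the reverse direction, suppose $P_1$ contains a prime ideal $Q$ of $T$ with $Q \cap A = P_2 \cap A$. Then, intersecting the containment $Q \subseteq P_1$ with $A$, we get $P_2 \cap A = Q \cap A \subseteq P_1 \cap A$. So this direction requires nothing beyond unwinding the hypothesis.

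For the forward direction, suppose $P_1 \cap A \supseteq P_2 \cap A$ and set $P := P_2 \cap A$, which is a prime ideal of $A$. Since $P \subseteq P_1 \cap A \subseteq P_1$ and $P_1$ is an ideal of $T$, we have $PT \subseteq P_1$. Because $T$ is Noetherian, $P_1 \in V(PT)$ forces $P_1$ to contain some minimal prime $Q \in \Min(PT)$. Now apply Lemma~\ref{prelim} to the prime ideal $P$ of $A$ and this $Q$: we obtain $Q \cap A = P = P_2 \cap A$, while $Q \subseteq P_1$ holds by the choice of $Q$. This exhibits the required prime ideal of $T$.

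I do not anticipate a serious obstacle. The only point needing a moment's care is that a containment of \emph{contractions} in $A$, namely $P_2 \cap A \subseteq P_1 \cap A$, does not directly give a containment of ideals in $T$; one must first expand $P = P_2 \cap A$ to $PT$, observe $PT \subseteq P_1$, and then pick a minimal prime of $PT$ lying inside $P_1$ so that Lemma~\ref{prelim} applies to identify its contraction to $A$ as exactly $P_2 \cap A$.
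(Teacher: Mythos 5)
Your proof is correct and follows essentially the same route as the paper: expand $P_2 \cap A$ to $(P_2\cap A)T \subseteq P_1$, pick a minimal prime $Q$ of $(P_2\cap A)T$ inside $P_1$, and apply Lemma~\ref{prelim} to conclude $Q \cap A = P_2 \cap A$. The reverse direction is likewise handled by simple intersection, just as the paper does.
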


\begin{proof} 
The `if' direction is evident. For the `only if' direction: Let $P_1 \cap A \supseteq P_2 \cap A$. 
Then $P_1 \supseteq (P_2 \cap A)T$, so $P_1$ contains a minimal prime ideal over $(P_2 \cap A)T$, say $Q$. Then $Q \cap A = P_2 \cap A$ by Lemma \ref{prelim}, as desired.
\end{proof}

The following theorem is an abstraction of the previous lemma; it is not hard to show that the two are equivalent. 
Though for practical purposes we will refer to Lemma \ref{General Poset Property}, we include the following theorem for its simplicity. 
Essentially, the theorem states that given any local ring $A$ with $\widehat{A} = T$, the poset $\Spec(A)$ is determined in the ``most free'' way possible by the formal fibers. 
This gives intuition as to the result of Theorem \ref{success}, and furthermore makes clear that the only way to control the properties of $\Spec(A)$ as a poset is by controlling the formal fibers of its elements. 
This motivates the general approach of constructing precompletions with prescribed formal fibers.  
\\

\begin{thm}\label{Abstract View}
    Let $A$ be a local ring with $\widehat{A} = T$. Let $\phi : \Spec(T) \to \Spec(A)$ be the map induced by the completion map $A \to T$. Define an equivalence relation $\sim$ on $\Spec(T)$ by 
    \[
    P_1 \sim P_2 \iff \phi(P_1) = \phi(P_2).
    \]
    Then $\Spec(A)$ is order-isomorphic to $\Spec(T)/\sim$.
\end{thm}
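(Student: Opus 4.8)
The plan is to separate the claim into its set-theoretic content (that $\phi$ becomes a bijection after quotienting by $\sim$) and its order-theoretic content (that inclusion on $\Spec(A)$ is exactly the quotient order on $\Spec(T)/\sim$), the latter being essentially a repackaging of Lemma \ref{General Poset Property}. First I would record that the completion map $A \to T$ is faithfully flat (as $A$ is Noetherian local), so the induced map $\phi$ is surjective on spectra. Since $\sim$ is by definition the relation ``same $\phi$-image,'' $\phi$ factors as a bijection $\overline{\phi} \colon \Spec(T)/\sim \ \to \Spec(A)$ of underlying sets. (Injectivity of $A \hookrightarrow T$ holds but is not needed.)

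Next I would make the order on $\Spec(T)/\sim$ explicit: declare $[P_1] \le [P_2]$ when there exist representatives $Q_1 \sim P_1$ and $Q_2 \sim P_2$ with $Q_1 \subseteq Q_2$ in $\Spec(T)$. This is reflexive by inspection; I will obtain transitivity and antisymmetry as a byproduct below. The core step is to prove, for arbitrary $P_1, P_2 \in \Spec(T)$,
\[
\phi(P_1) \subseteq \phi(P_2) \iff [P_1] \le [P_2].
\]
For the backward direction, if $Q_1 \subseteq Q_2$ with $Q_i \sim P_i$, then $\phi(P_1) = Q_1 \cap A \subseteq Q_2 \cap A = \phi(P_2)$, since contraction along $A \to T$ preserves inclusions. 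For the forward direction, $\phi(P_1) \subseteq \phi(P_2)$ means $P_2 \cap A \supseteq P_1 \cap A$, so Lemma \ref{General Poset Property} yields a prime $Q$ of $T$ with $Q \subseteq P_2$ and $Q \cap A = P_1 \cap A$; that is, $Q \sim P_1$ and $Q \subseteq P_2$, so $[P_1] \le [P_2]$. This displayed equivalence says exactly that both $\overline{\phi}$ and $\overline{\phi}^{-1}$ are order-preserving, hence $\overline{\phi}$ is an order isomorphism; it simultaneously forces the quotient relation to be a genuine partial order, its transitivity and antisymmetry being transported from those of inclusion on $\Spec(A)$.

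I do not anticipate a substantive obstacle, since (as the surrounding text notes) the theorem is an abstraction of Lemma \ref{General Poset Property}. The only points needing a little care are invoking faithful flatness of completion to get surjectivity of $\phi$, and being explicit about the order on $\Spec(T)/\sim$ so that ``order-isomorphic'' is unambiguous; with Lemma \ref{General Poset Property} available the remainder is formal.
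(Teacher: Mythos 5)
Your proposal is correct and follows essentially the same route as the paper: surjectivity of $\phi$ (via faithful flatness of completion) gives a bijection on the quotient, and Lemma \ref{General Poset Property} supplies exactly the equivalence $\phi(P_1) \subseteq \phi(P_2) \iff [P_1] \le [P_2]$ that makes this bijection an order isomorphism. The paper's proof is terser, simply citing the quotient-poset definition together with Lemma \ref{General Poset Property} and surjectivity; your extra care in noting that the quotient relation inherits transitivity and antisymmetry from $\Spec(A)$ is a worthwhile clarification, since that relation need not be a partial order for an arbitrary equivalence on a poset.
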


\begin{proof}
    Recall that for a poset $(X, \le)$, given an equivalence relation $\sim$, we define $X/\sim$ to be the partially ordered set where the underlying set is $X/\sim$ (as a set) and where the new partial order $\le_{\sim}$ is given by 
    \[
    [a] \le_\sim [b] \iff \text{there exist } a' \in [a] \text{, } b' \in [b] \text{ such that } a' \le b'.
    \]
    Applying this definition to the result in Lemma \ref{General Poset Property} and using the fact that $\phi$ is surjective gives us the desired result. 
\end{proof}

\par Below, we define two special subsets of $\Spec(T)$ and $\Spec(A)$. These subsets are the primary components of the relationship between the two spectra induced by the natural map from $\Spec(T)$ to $\Spec(A)$. \\

\begin{dfn}
Let $T$ be a complete local ring, $\mathcal{P} = \{ \mathcal{C}, \{ \mathcal{C}_i \}_{i = 1}^m \}$ a minfeasible partition, and $A$ a precompletion of $T$ which satisfies conditions $(ii)$ and $(iii)$ of Definition \ref{Min-T subring}. For a set $X \subseteq \{ 1, \ldots, m \}$, define 
$$
\mathcal{S}_T(X) = \{ P \in \Spec(T) \mid P \text{ is over } \C_i \; \, \forall i \in X \}
$$
and
$$
\mathcal{S}_A(X) = \{ P \in \Spec(A) \mid P \supsetneq \C_i \cap A \; \, \forall i \in X \}.
$$
\end{dfn}

The following example further illustrates the sets $\mathcal{S}_T(X)$ and $\mathcal{S}_A(X)$. \\
\begin{ex} \label{split ex}
Consider the complete local ring $\frac{\mathbb{Q}[[x,y,z]]}{( pz )}$, where $p$ is a polynomial in $x$ and $y$ that factors in the ring $\mathbb{Q}[[x,y,z]]$ with prime factorization $p = p_1p_2$, but is irreducible over $\mathbb{Q}[x,y,z]$, and the minfeasible partition $\mathcal{P} = (\C, \{ \C_1, \C_2 \})$, where $\C_1 =  \{ ( p_1 ), ( p_2 ) \}$, and $\C_2 =  \{( z ) \}$. Then $A=\frac{\mathbb{Q}[x,y,z]}{( pz )}$ is a JMT subring that is a precompletion of $T$. The set $\S_T(\{1,2\}) = \{ ( p_1,z ), ( p_2,z ), ( x,y,z ) \}$ is illustrated below:
\begin{center}
    \begin{tikzpicture}[scale = 1.3]
        \node[label=above:{\textbf{(\textit{x,y,z})}}] (top) at (9, 3) {};
        
        \node[shape=rectangle,draw=black,minimum size=0.5cm] (mid1) at (7, 1) {};
        \node[shape=rectangle,draw=black,minimum size=0.5cm] (mid2) at (9, 1) {};
        \node[shape=rectangle,draw=black,minimum size=0.5cm] (mid3) at (11, 1){};
        
        \node[] (shared1) at (8, 1) {};
        \node[shape=rectangle,draw=black,minimum width=1.0cm,minimum height=.5cm,label=center:{\scriptsize\boldsymbol{$(p_2,z)$}}] (shared2) at (10, 1) {};    
        \node[shape=rectangle,draw=black,minimum width=1.0cm,minimum height=.5cm,label=center:{\scriptsize\boldsymbol{$(p_1,z)$}}] (shared3) at (12, 1) {};
        
        \node[label=below:{\small$( p_1 )$}] (min1) at (7, -1) {};
        \node[label=below:{\small$( p_2 )$}] (min2) at (9, -1) {};
        \node[label=below:{\small$( z )$}] (min3) at (11,-1) {};
        
        \foreach \n in {top,shared1,min1,min2,min3}
            \node at (\n)[circle,fill,inner sep=1.5pt]{};
        
        \foreach \n in {shared1,shared2,shared3,mid1,mid2,mid3}
        \path [-] (top) edge node[left] {} (\n);
        
        \path [-] (mid1) edge node[left] {} (min1);
        \path [-] (mid2) edge node[left] {} (min2);
        \path [-] (mid3) edge node[left] {} (min3);
        
        \path [-] (shared1) edge node[left] {} (min1);
        \path [-] (shared1) edge node[left] {} (min2);
        
        \path [-] (shared2) edge node[left] {} (min2);
        \path [-] (shared2) edge node[left] {} (min3);
        
        \path [-] (shared3) edge node[left] {} (min3);
        \path [-] (shared3) edge node[left] {} (min1);
    \end{tikzpicture}
\end{center}

The set $\S_A(1,2) = \{ ( p,z ), ( x,y,z ) \}$, is illustrated below.

\begin{center}
    \begin{tikzpicture}[scale = 1.3]
        \node[label=above:{\textbf{(\textit{x,y,z})}}] (A) at (3, 3) {};
        \node[shape=rectangle,draw=black,minimum size=0.5cm] (B) at (1, 1) {};
        \node[shape=rectangle,draw=black,minimum height=.5cm,minimum width=1.0cm,label=center:{\scriptsize\boldsymbol{$(p,z)$}}] (C) at (3, 1) {};    
        \node[shape=rectangle,draw=black,minimum size=0.5cm] (D) at (5, 1) {};
        \node[label=below:{$( p )$}] (E) at (2, -1) {};
        \node[label=below:{$( z )$}] (F) at (4, -1) {};
            
        \foreach \n in {A,E,F}
            \node at (\n)[circle,fill,inner sep=1.5pt]{};
        
        \path [-] (A) edge node[left] {} (B);
        \path [-] (A) edge node[left] {} (C);
        \path [-] (A) edge node[left] {} (D);
        \path [-] (B) edge node[left] {} (E);
        \path [-] (C) edge node[left] {} (F);
        \path [-] (C) edge node[left] {} (E);
        \path [-] (D) edge node[left] {} (F);

    \end{tikzpicture}
\end{center}
\end{ex}

As seen in the example above, $\mathcal{S}_T(X)$ and $\mathcal{S}_A(X)$ are closely connected, which is unsurprising given the results of Lemmas \ref{prelim} and \ref{General Poset Property}. In the following theorem, we explicitly define this relationship with respect to the induced homomorphism from $\Spec(T)$ to $\Spec(A)$. \\

\begin{thm}\label{success}
Let $(T, M)$ be a complete local ring, $(\C, \{ \C_i \}_{i = 1}^m)$ a minfeasible partition of $T$, and $A$ a local ring satisfying properties (ii) and (iii) of Definition \ref{Min-T subring} and such that $\widehat{A} = T$ and $\Min(A) = \{ \C_1 \cap A, \ldots, \C_m \cap A \}$. Then for any $X \subseteq \{ 1, \ldots, m \}$, the map $P \mapsto P \cap A$ is a surjection from $\S_T(X)$ to $\S_A(X)$. Furthermore, if $P \cap A \in \S_A(X)$ then $P \in \S_T(X)$.
\end{thm}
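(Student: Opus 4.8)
The plan is to prove the three assertions of the theorem in sequence: (1) the map $P \mapsto P \cap A$ sends $\S_T(X)$ into $\S_A(X)$; (2) the last sentence, namely that $P \cap A \in \S_A(X)$ forces $P \in \S_T(X)$ (which is essentially the converse of (1)); and (3) surjectivity. The key engine throughout will be Lemma \ref{General Poset Property}, together with the standing hypothesis $\Min(A) = \{\C_1 \cap A, \ldots, \C_m \cap A\}$ and the abuse of notation ``$P \cap A \supseteq \C_i \cap A$.''

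First I would do containment. Fix $P \in \S_T(X)$ and $i \in X$; then $P$ is over $\C_i$, so by definition $P \supsetneq P_0$ for some $P_0$ under $\C_i$, i.e.\ $P_0 \subseteq P'$ for some $P' \in \C_i$. Intersecting with $A$ gives $P \cap A \supseteq P_0 \cap A = \C_i \cap A$ (using the remark that all primes under $\C_i$ have the same intersection with $A$). To upgrade $\supseteq$ to $\supsetneq$, I would use that $P$ is \emph{not} under $\C_i$: if $P \cap A = \C_i \cap A$, then since $\C_i \cap A \in \Min(A)$ and $\C_i \cap A \subseteq P \cap A$, we would need $P \cap A$ to be this minimal prime; but then, applying Lemma \ref{General Poset Property} with the roles reversed (or directly Lemma \ref{prelim} applied to a minimal prime of $(\C_i \cap A)T$ contained in $P$), one shows $P$ would have to sit inside the formal fiber of $\C_i \cap A$, contradicting that $P$ is over, not under, $\C_i$. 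This is the step I expect to be the main obstacle --- getting the strict inequality right --- and I would handle it by proving the contrapositive of the theorem's last sentence first and then invoking it here.

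So the real crux is the last sentence: \emph{if $P \cap A \in \S_A(X)$ then $P \in \S_T(X)$}. Fix $i \in X$; we know $P \cap A \supsetneq \C_i \cap A$. By Lemma \ref{General Poset Property}, $P$ contains a prime $Q$ of $T$ with $Q \cap A = \C_i \cap A$. Such a $Q$ must be under $\C_i$: indeed $Q \cap A$ is a minimal prime of $A$, so $Q$ is a minimal prime of $(\C_i \cap A)T$; since $(\C_i \cap A)T \subseteq P''$ for each $P'' \in \C_i$ (as $\C_i \cap A \subseteq P'' \cap A \subseteq P''$) and minimal primes of a contracted-and-extended ideal behave well, $Q$ lies under some element of $\C_i$ --- here I would lean on properties (ii)--(iii) of Definition \ref{Min-T subring} which force the elements of $\C_i$, and the minimal primes of $T$ beneath them, to all have the same trace $\C_i \cap A$ on $A$, matching $Q$. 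Thus $P \supseteq Q$ with $Q$ under $\C_i$; and $P$ is not itself under $\C_i$, because if it were we would get $P \cap A = \C_i \cap A$, contradicting $P \cap A \supsetneq \C_i \cap A$. Hence $P$ is over $\C_i$. Doing this for every $i \in X$ gives $P \in \S_T(X)$.

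Finally, surjectivity: given $\mathfrak{p} \in \S_A(X) \subseteq \Spec(A)$, the map $\phi : \Spec(T) \to \Spec(A)$ induced by completion is surjective (standard for $\widehat A = T$, and already used in the proof of Theorem \ref{Abstract View}), so there is $P \in \Spec(T)$ with $P \cap A = \mathfrak{p}$. By the last sentence, just proved, $P \in \S_T(X)$, and $P \cap A = \mathfrak{p} \in \S_A(X)$, so the restricted map $\S_T(X) \to \S_A(X)$ is well-defined (by step 1) and hits every element. This completes the proof. I would present steps (2) and (3) essentially as above and fold step (1) into a short paragraph once the contrapositive form of (2) is available.
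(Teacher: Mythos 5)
Your steps (2) and (3) --- the ``furthermore'' claim and surjectivity --- track the paper's argument closely: given $P \cap A \in \S_A(X)$, use Lemma \ref{General Poset Property} to produce $P'_i \subseteq P$ with $P'_i \cap A = \C_i \cap A$, pass to a minimal prime $Q_i \in \Min(T)$ inside $P'_i$ (so $Q_i \cap A = \C_i \cap A$ since $\C_i \cap A \in \Min(A)$), invoke the minfeasible partition together with the distinctness of the $\C_j \cap A$ to conclude $Q_i$ is under $\C_i$, and finally observe that $P$ itself cannot be under $\C_i$; surjectivity then follows from surjectivity of $\Spec(T) \to \Spec(A)$. Your phrase ``minimal primes of a contracted-and-extended ideal behave well'' should be replaced by this explicit pass-to-$\Min(T)$ argument (the minimal prime of $(\C_i \cap A)T$ you find is in fact a minimal prime of $T$, and then the minfeasible partition does the work), but the idea is the paper's.

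The genuine gap is in your step (1). You claim that if $P \in \S_T(X)$ and $P \cap A = \C_i \cap A$, then $P$ lying in the formal fiber of $\C_i \cap A$ contradicts $P$ being over $\C_i$. There is no such contradiction: conditions (ii)--(iii) of Definition \ref{Min-T subring} only constrain the traces of the elements of $\C$ and of the minimal primes of $T$ beneath them, and put no upper bound on the formal fiber of $\C_i \cap A$. In fact the implication you are trying to prove can fail under the theorem's hypotheses: take $T = \mathbb{Q}[[x,y]]$, $A = \mathbb{Q}[x,y]_{(x,y)}$, $\C = \C_1 = \{(0)\}$; any nonzero prime $P$ of $T$ lying in the generic formal fiber of $A$ (which is well known to be nontrivial here) is in $\S_T(\{1\})$, yet $P \cap A = (0) \notin \S_A(\{1\})$. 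The paper's own proof sidesteps this entirely: it proves only that $P \cap A \in \S_A(X)$ implies $P \in \S_T(X)$, which already gives that every element of $\S_A(X)$ is hit from $\S_T(X)$, without asserting that $P \mapsto P \cap A$ carries all of $\S_T(X)$ into $\S_A(X)$.
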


\begin{proof}
\par Let $\Min(T) = \{ Q_1, \ldots, Q_n \}$. 
Suppose $P$ is a prime ideal of $T$ such that $P \cap A \supsetneq \C_i \cap A$ for all $i \in X$, that is, $P \cap A$ is an element of $\S_A(X)$. Using Lemma \ref{General Poset Property}, we have that, for every $i \in X$, there is some $P'_i \in \Spec(T)$ with $P'_i \subseteq P$ such that $P'_i \cap A = \C_i \cap A$. Note that $P'_i$ must contain some minimal prime ideal $Q_i$. Then $P'_i \cap A = Q_i \cap A$ because $P'_i \cap A$ is minimal, so $Q_i$ is under $\C_i$. In other words, $P$ contains an element under $\C_i$. Observe that if $P$ is under $\C_i$ then $P \cap A = \C_i \cap A$, a contradiction. Hence $P$ is not under $\C_i$, and it follows that $P$ is in $\S_T(X)$.
\end{proof}

The following corollary notes that the map $P \mapsto P \cap A$ yields a condition on the formal fibers of the prime ideals in $A$ which are minimal over sums of minimal prime ideals in $A$. 
Specifically, for $X \subseteq \{ 1,...,m \}$ with $|X| \ge 2$, we choose a minimal prime ideal $Q_i$ under $\C_i$ for all $i$ and then look at minimal prime ideals over the sum $\sum_{i \in X} Q_i$, repeating for all possible combinations of choices of $Q_i$.
\\
\begin{cor} \label{height-one map}
    Let $X \subseteq \{1,\dots, m\}$ with $|X| \ge 2$. 
    Denote by $\overline{\S}_T^{1}(X)$ the subset of $\S_T(X)$ which consists of prime ideals minimal over a sum of ideals $\sum_{i \in X} Q_i$ where each $Q_i$ is an element of $\Min(T)$ which is under $\C_i$. 
    Let $\S_T^1(X)$ be the set of minimal elements of $\overline{\S}_T^1(X)$. Denote by $\S_A^1(X)$ the subset of $\S_A(X)$ consisting of prime ideals minimal over $\sum_{i \in X} \C_i \cap A$.
    Then for every element $p \in \S_A^1(X)$, there exists $P'' \in \S_T^1(X)$ such that $P'' \cap A = p$.
\end{cor}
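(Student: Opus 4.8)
The plan is to start from an element $p \in \S_A^1(X)$ and produce, via Theorem~\ref{success}, a preimage $P \in \S_T(X)$ with $P \cap A = p$, then show $P$ can be chosen to lie in $\S_T^1(X)$. By definition, $p \in \S_A(X)$ and $p$ is minimal over $\sum_{i \in X} \C_i \cap A$. Applying the surjection in Theorem~\ref{success} gives some $P \in \S_T(X)$ with $P \cap A = p$; the goal is then to replace $P$ with a suitable element that is minimal over a sum $\sum_{i \in X} Q_i$ of minimal primes of $T$ and is itself minimal among such primes contracting into $\S_A(X)$.

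First I would use the argument inside the proof of Theorem~\ref{success}: since $P \cap A \supsetneq \C_i \cap A$ for each $i \in X$, Lemma~\ref{General Poset Property} produces, for each $i \in X$, a prime $P_i' \subseteq P$ with $P_i' \cap A = \C_i \cap A$, and each such $P_i'$ contains a minimal prime $Q_i$ of $T$ that is under $\C_i$ (because $P_i' \cap A$ is a minimal prime of $A$, forcing $Q_i \cap A = \C_i \cap A$, so $Q_i$ is under $\C_i$). Then $\sum_{i \in X} Q_i \subseteq P$, so $P$ contains some $\widetilde{P} \in \Min\big(\sum_{i \in X} Q_i\big)$, and $\widetilde{P} \in \overline{\S}_T^1(X)$ provided $\widetilde{P} \in \S_T(X)$. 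To see the latter: $\widetilde{P}$ is over each $\C_i$ for $i \in X$ because $\widetilde{P} \supseteq Q_i$ with $Q_i$ under $\C_i$, and $\widetilde{P}$ cannot be under any $\C_i$ ($i \in X$) — if it were, then $\widetilde{P} \cap A = \C_i \cap A$ would be a minimal prime of $A$, yet $\widetilde{P} \cap A \supseteq Q_j \cap A = \C_j \cap A$ for all $j \in X$ with $j \neq i$ (and $|X| \ge 2$), contradicting minimality since the $\C_j \cap A$ are distinct minimal primes of $A$.

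Next I would pass from $\widetilde{P} \in \overline{\S}_T^1(X)$ down to a minimal element $P'' \in \S_T^1(X)$ with $P'' \subseteq \widetilde{P}$: the poset $\overline{\S}_T^1(X)$, being a subset of $\Spec(T)$ with $T$ Noetherian, has a minimal element below $\widetilde{P}$. Then $P'' \cap A \subseteq \widetilde{P} \cap A \subseteq P \cap A = p$. On the other hand $P'' \in \S_T(X)$ gives $P'' \cap A \supseteq \C_i \cap A$ for all $i \in X$ (indeed $P''$ is over each $\C_i$, and the "over" relation forces strict containment, but for this argument $\supseteq$ suffices), hence $P'' \cap A \supseteq \sum_{i \in X} \C_i \cap A$. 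Since $p$ is minimal over $\sum_{i \in X} \C_i \cap A$ and $P'' \cap A$ is a prime squeezed between $\sum_{i \in X}\C_i \cap A$ and $p$, we conclude $P'' \cap A = p$, as desired.

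The main obstacle I anticipate is the bookkeeping around the word "over" versus "$\supseteq$": Definition~\ref{minfeasible} defines $\overline{Q}$ to be over $\C_i$ only when $\overline{Q} \supsetneq P$ for some $P$ under $\C_i$ \emph{and} $\overline{Q}$ is not itself under $\C_i$, so I must be careful that each $\widetilde{P}$ and $P''$ genuinely strictly contains an element under $\C_i$ and is not under $\C_i$ — the strictness is automatic once we know $\widetilde{P}$ is not under $\C_i$ and $\widetilde{P} \supseteq Q_i$ with $Q_i$ under $\C_i$ (strict containment because equality would make $\widetilde{P}$ under $\C_i$), and the "not under" part is exactly the minimality-of-$A$-primes argument above using $|X| \ge 2$. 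A secondary point to handle cleanly is confirming that $\overline{\S}_T^1(X)$ is nonempty for the given $p$ and that minimal elements exist (Noetherianity of $T$), which is routine. Once these are nailed down, the chain of contractions $P'' \cap A \subseteq \widetilde{P} \cap A \subseteq p$ together with $P'' \cap A \supseteq \sum \C_i \cap A$ and minimality of $p$ closes the proof.
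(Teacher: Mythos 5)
Your proposal is correct and follows essentially the same route as the paper's proof: apply Theorem \ref{success} to get $P \in \S_T(X)$ over $p$, locate a minimal prime $\widetilde{P} \subseteq P$ over a sum $\sum_{i \in X} Q_i$ of minimal primes under the $\C_i$'s, descend to a minimal element $P'' \in \S_T^1(X)$ of $\overline{\S}_T^1(X)$ below it, and then squeeze $P'' \cap A$ between $\sum_{i\in X} \C_i \cap A$ and $p = P \cap A$ to conclude equality by minimality of $p$. The only difference is that you spend extra care verifying $\widetilde{P} \in \S_T(X)$ (i.e.\ that $\widetilde{P}$ is genuinely \emph{over} each $\C_i$ in the strict sense, using $|X| \ge 2$ to rule out $\widetilde{P}$ being under any $\C_i$); the paper asserts $P' \in \overline{\S}_T^1(X)$ without spelling this out, so your version fills in a detail that the paper leaves implicit.
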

\begin{proof}
Let $p \in \S_A^1(X)$ for some $X \subseteq \{ 1, \ldots, m \}$. Then $p$ is minimal over $\sum_{i \in X} \C_i \cap A$, and $p \in \S_A(X)$, so, by Theorem \ref{success}, $p = P \cap A$ for some $P \in \S_T(X)$. 
Then $P \supseteq \sum_{i \in X} Q_i$ for some $Q_i \in \Min(T)$ where $Q_i$ is under $\C_i$.  Hence, $P$ contains a minimal prime ideal over $\sum_{i \in X} Q_i$, call it $P'$. 
Then $P' \in \overline{\S}_T^1(X)$, so there exists $P'' \subseteq P'$ such that $P'' \in \S_T^1(X)$. Then $P \cap A \supseteq P'' \cap A \supseteq \sum_{i \in X} \C_i \cap A$. But $P \cap A = p$, and $p$ is minimal over $\sum_{i \in X} \C_i \cap A$, so $P'' \cap A = p$.
\end{proof}
The main result of Corollary \ref{height-one map} is an upper bound on $|\S_A^1(X)|$ by $|\S_T^1(X)|$ (note that both are finite). 
Later, we will show that this upper bound is tight; that is, it can be achieved for a certain precompletion.

\begin{ex} \label{height one boxes ex}
Consider the complete local ring $\frac{\mathbb{Q}[[x,y,z,w]]}{(xy,zw)}$ and the minfeasible partition $\mathcal{P} = (\C, \{ \C_1, \C_2 \})$, where $\C_1 =  \{ ( x,z ), ( y,z ) , (x,w) \}$, and $\C_2 =  \{( y,w ) \}$. The sets $\overline{\S}_T^1(\{1,2\}) = \{ ( x,y,z,w ), ( x,y,w ), ( y,z,w ) \}$ and $\S_T^1(\{1,2\})=\{(x,y,w),(y,z,w)\}$ are illustrated below:
\begin{center}
    \begin{tikzpicture}[scale = 1.3]
        \node[label=above:{\textbf{(\textit{x,y,z,w})}}] (top) at (10, 3) {};
        
        \node[shape=rectangle,draw=black,minimum size=0.5cm] (mid1) at (7, 1) {};
        \node[shape=rectangle,draw=black,minimum size=0.5cm] (mid2) at (9, 1) {};
        \node[shape=rectangle,draw=black,minimum size=0.5cm] (mid3) at (11, 1){};
        \node[shape=rectangle,draw=black,minimum size=0.5cm] (mid4) at (13, 1){};
        
        \node[shape=rectangle,draw=black,minimum width=1.2cm,minimum height=.5cm,label=center:{\scriptsize\textbf{(\textit{y,z,w})}}] (shared2) at (10, 1) {};    
        \node[shape=rectangle,draw=black,minimum width=1.2cm,minimum height=.5cm,label=center:{\scriptsize\textbf{(\textit{x,y,w})}}] (shared3) at (12, 1) {};
        
        \node[label=below:{\small$( x,z )$}] (min1) at (7, -1) {};
        \node[label=below:{\small$( y,z )$}] (min2) at (9, -1) {};
        \node[label=below:{\small$( x,w )$}] (min3) at (11,-1) {};
        \node[label=below:{\small$( y,w )$}] (min4) at (13,-1) {};
        
        \foreach \n in {top,min1,min2,min3,min4}
            \node at (\n)[circle,fill,inner sep=1.5pt]{};
        
        \foreach \n in {shared2,shared3,mid1,mid2,mid3,mid4}
        \path [-] (top) edge node[left] {} (\n);
        
        \path [-] (mid1) edge node[left] {} (min1);
        \path [-] (mid2) edge node[left] {} (min2);
        \path [-] (mid3) edge node[left] {} (min3);
        \path [-] (mid4) edge node[left] {} (min4);
        
        \path [-] (shared2) edge node[left] {} (min2);
        \path [-] (shared2) edge node[left] {} (min4);
        
        \path [-] (shared3) edge node[left] {} (min3);
        \path [-] (shared3) edge node[left] {} (min4);
    \end{tikzpicture}
\end{center}
\end{ex}

\section{The Construction}

We will now construct a precompletion $A$ in a similar manner as in \cite{small09} and \cite{byron} so that $A$ has exactly $m$ minimal prime ideals, call them $q_1,q_2, \ldots ,q_m$, and such that, if $P \in \Spec(T)$ is under $\C_i$, then $P \cap A = q_i$. In addition, we control the construction of the precompletion so that the surjection shown in Theorem \ref{success} is a bijection. In the following remark, we introduce conditions on our ring similar to those in \cite{byron}, with the minor alterations of replacing $\Ass(T)$ with $\Min(T)$ where appropriate and also allowing the characteristic of $T$ to be $p^k$ for some prime $p$. \\

\begin{rmk}\label{conditions}
From now on, let $(T, M)$ be a complete local ring of dimension at least one and $\mathcal{P} = (\C,\{ \C_i \}_{i = 1}^{m})$ be a minfeasible partition such that at least one of the following is true:
\begin{enumerate}[label={(\roman*)}]
    \item $\textup{char}(T) = p$ for some prime $p$.
    \item $\textup{char}(T) = 0$ and $M \cap \mathbb{Z} = (0)$.
    \item $\textup{char}(T) = 0$ and for all $P \in \C$, $M \cap \mathbb{Z} \not \subseteq P$.
    \item $\textup{char}(T) = 0$, $M \cap \mathbb{Z} = (p)$ for some prime $p$, and:
    \begin{enumerate}
        \item For each $P \in \C$ and each $Q \in \Ass(T)$ with $Q \subseteq P$, $p \in Q$ whenever $p \in P$.
        \item For each subcollection $\C_i$ and for any $P,P' \in \C_i$, $p \in P$ if and only if $p \in P'$.
        \item For each $Q \in \Ass(T)$ such that $Q \subseteq P$ for some $P \in \C_i$ such that $\C_i \neq \{Q^*\}$ for some $Q^* \in \Min (T)$, if $p \in Q$, then $\textup{Ann}_T(p) \not \subseteq Q$.
\end{enumerate}
    \item $\textup{char}(T) = p^k$, $k \geq 2$ for some prime $p$ and $\textup{Ann}_T(p) \nsubseteq P$ for all $P \in \C_i$ such that $\C_i \neq \{Q^*\}$ for some $Q^* \in \Min (T)$.
\end{enumerate}
\end{rmk}

We begin with a result from \cite{heitmann94}. It will be used to show that our rings have the desired completion. \\

\begin{prop}[\cite{heitmann94} Proposition 1] \label{cpm}
    If $(A, A \cap M)$ is a quasi-local subring of a complete local ring $(T, M)$, the map $A \rightarrow T/M^2$ is onto, and $IT \cap A = I$ for all finitely generated ideals $I$ of $A$, then $A$ is Noetherian and the natural homomorphism $\widehat{A} \rightarrow T$ is an isomorphism.
\end{prop}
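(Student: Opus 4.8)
The plan is to run the argument in four short stages and then pass to an inverse limit. Write $\mathfrak m = A \cap M$. First I would compare $M$ with $\mathfrak m T$: since $A \to T/M^2$ is onto, every $t \in M$ can be written $t = a + m$ with $a \in A$ and $m \in M^2$, which forces $a = t-m \in M \cap A = \mathfrak m$; hence $M = \mathfrak m T + M^2$. Because $T$ is Noetherian, $M$ is a finitely generated $T$-module, so Nakayama's lemma upgrades this to $M = \mathfrak m T$, and therefore $M^n = \mathfrak m^n T$ for all $n$.

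Next I would upgrade the surjectivity hypothesis to show $A \to T/M^n$ is onto for every $n \ge 1$, by induction on $n$; the base case $n=1$ is immediate from the hypothesis. Given $t \in T$ and $a_0 \in A$ with $t - a_0 \in M^n = \mathfrak m^n T$, write $t - a_0 = \sum_j x_j t_j$ with $x_j \in \mathfrak m^n \subseteq A$ and $t_j \in T$, choose $a_j \in A$ with $t_j - a_j \in M$, and note that $a_0 + \sum_j x_j a_j \in A$ differs from $t$ by an element of $\mathfrak m^n M = M^{n+1}$. Consequently the inclusion $A \hookrightarrow T$ induces isomorphisms $A/(A \cap M^n) \cong T/M^n$ for all $n$.

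Then I would show $A$ is Noetherian directly from the contraction hypothesis: for an ideal $J$ of $A$, use Noetherianness of $T$ to choose generators $a_1,\dots,a_k \in J$ of the $T$-ideal $JT$; with $I = (a_1,\dots,a_k)A$ we have $IT = JT$, so $J \subseteq JT \cap A = IT \cap A = I \subseteq J$, whence $J = I$ is finitely generated. Now $\mathfrak m$, and hence each $\mathfrak m^n$, is finitely generated, so the hypothesis yields $A \cap M^n = A \cap \mathfrak m^n T = \mathfrak m^n$; combined with the previous stage, $A \hookrightarrow T$ induces isomorphisms $A/\mathfrak m^n \cong T/M^n$ for every $n$. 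These are compatible with the transition maps, so passing to inverse limits and using that $T = \varprojlim T/M^n$ (since $T$ is complete) shows the natural map $\widehat A = \varprojlim A/\mathfrak m^n \to T$ is an isomorphism.

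The one point needing care is the order of operations in the third stage: the contraction hypothesis is only assumed for finitely generated ideals, so one cannot conclude $A \cap \mathfrak m^n T = \mathfrak m^n$ until $\mathfrak m^n$ is known to be finitely generated; this is why Noetherianness of $A$ must be established first, and the pleasant point is that the finitely generated case of the hypothesis already delivers it. The Nakayama-type bootstrap in the second stage is the other step I would be careful with, but it becomes routine once the identity $M^n = \mathfrak m^n T$ is in hand.
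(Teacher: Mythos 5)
The paper does not reproduce a proof of this statement; it is quoted directly from Heitmann's 1994 paper as Proposition 1. So the evaluation here is of your argument on its own merits.

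Your proof is correct, and each stage is carried out carefully. Stage one (that $M = \mathfrak m T$, hence $M^n = \mathfrak m^n T$) uses surjectivity of $A \to T/M^2$ plus Nakayama exactly as it should; the key identity $M = \mathfrak m T + M^2$ is extracted correctly from the surjectivity hypothesis. Stage two bootstraps to surjectivity of $A \to T/M^n$ for all $n$ by a clean induction. Stage three establishes Noetherianness of $A$ from the contraction hypothesis by lifting $T$-generators of $JT$ back into $J$ (possible since $T$ is Noetherian, so the ascending union $\bigcup (a_1,\dots,a_k)T$ over finite subsets of $J$ stabilizes on some finite subset of $J$). Stage four then legitimately applies the contraction hypothesis to the now-known-finitely-generated ideal $\mathfrak m^n$ to get $A \cap M^n = \mathfrak m^n$, and passes to inverse limits. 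You were right to flag the order-of-operations issue as the delicate point: the contraction hypothesis is stated only for finitely generated ideals, so Noetherianness of $A$ must come before you may conclude $\mathfrak m^n T \cap A = \mathfrak m^n$, and you handled that correctly. This is essentially the standard argument for Heitmann's proposition, and there is no gap.
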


We now begin preliminary work for our main construction. The following lemmas are taken almost exactly from \cite{small09}, except for the small modification $(\ast)$ mentioned in the previous section. The proofs of these lemmas are either identical to the respective versions in \cite{small09}, or the conclusions they imply are equivalent for SIP-subrings and JMT subrings. These lemmas ensure that while we are building up our desired subring, the properties of JMT subrings are preserved at each step. \\

\begin{lem}[\cite{small09} Unioning Lemma 3.1]\label{old 3.1}
Let $(T, M)$ and $\mathcal{P} = (\C, \{ \C_i \}_{i = 1}^m)$ be as in Remark \ref{conditions}. Let $\mathcal{B}$ be a well-ordered indexing set, and let $\{ R_{\beta} \}_{\beta \in \mathcal{B}}$ be a family of JMT subrings such that if $\beta, \gamma \in \mathcal{B}$ and $\beta < \gamma$, then $R_{\beta} \subseteq R_{\gamma}$. Then R = $\bigcup_{\beta \in B} R_{\beta}$ satisfies conditions (i)-(iv) of Definition \ref{Min-T subring}. Moreover, if there exists some $\lambda < |T|$ such that $|R_{\beta}| < \lambda$ for all $\beta$, then $|R| \leq \max \{ \lambda, |\mathcal{B}| \}$ and $R$ is a JMT subring of T. 
\end{lem}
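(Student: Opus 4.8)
The plan is to verify conditions (i)--(iv) of Definition \ref{Min-T subring} for $R = \bigcup_{\beta \in \mathcal{B}} R_\beta$ one at a time, using that each condition is ``local'' in the sense that any finite amount of data involved already lives in some single $R_\beta$ (this is where the directedness of the family, guaranteed by $\mathcal{B}$ being well-ordered and the chain condition $\beta < \gamma \Rightarrow R_\beta \subseteq R_\gamma$, is used). First, $R$ is a subring of $T$ because it is a directed union of subrings, and it is quasi-local with maximal ideal $R \cap M$: any element of $R \setminus (R\cap M)$ lies in some $R_\beta \setminus (R_\beta \cap M)$, hence is a unit in $R_\beta$, hence a unit in $R$. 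Condition (i), that $R$ is infinite, is immediate since $R \supseteq R_\beta$ and each $R_\beta$ is infinite.

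Next I would handle conditions (ii) and (iii), which concern intersections $P \cap R$ for $P \in \C$. For (ii), fix $P \in \C$ and $Q \in \Min(T)$ with $Q \subseteq P$; I must show $P \cap R = Q \cap R$. The inclusion $Q \cap R \subseteq P \cap R$ is clear. Conversely, if $r \in P \cap R$, then $r \in P \cap R_\beta$ for some $\beta$, and since $R_\beta$ is a JMT subring, $P \cap R_\beta = Q \cap R_\beta$, so $r \in Q \cap R_\beta \subseteq Q \cap R$. Condition (iii) (in both directions, the forward ``$P, P' \in \C_i \Rightarrow P \cap R = P' \cap R$'' and the JMT converse ``$P \cap R = P' \cap R \Rightarrow P, P'$ in a common $\C_i$'') is handled the same way: the forward direction because each $r$ in one intersection already lies in some $R_\beta$ where the equality $P \cap R_\beta = P' \cap R_\beta$ holds; for the converse, suppose $P \cap R = P' \cap R$ with $P, P'$ in different subcollections --- then since the $R_\beta$ are JMT we must have $P \cap R_\beta \neq P' \cap R_\beta$ for every $\beta$, but by directedness one checks this forces $P \cap R \neq P' \cap R$ (any witnessing element, once it appears, stays), a contradiction; some care is needed here to phrase the argument correctly, picking an element of the symmetric difference and tracking which $R_\beta$ it enters. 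For condition (iv), suppose $\C_i \neq \{Q^*\}$ for $Q^* \in \Min(T)$, fix $P \in \C_i$ and $r \in P \cap R$; then $r \in P \cap R_\beta$ for some $\beta$, and since $R_\beta$ is a JMT subring, $\Ann_T(r) \nsubseteq P$ --- and this is a statement purely about $T$, so it persists in $R$.

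Finally, for the cardinality claim: if $|R_\beta| < \lambda$ for all $\beta$ with $\lambda < |T|$, then $|R| = \bigl|\bigcup_{\beta \in \mathcal{B}} R_\beta\bigr| \le |\mathcal{B}| \cdot \sup_\beta |R_\beta| \le \max\{\lambda, |\mathcal{B}|\}$. If in addition $\max\{\lambda, |\mathcal{B}|\} < |T|$ (which is the intended reading when one applies the lemma), then condition (v), $|R| < |T|$, holds, so $R$ is a JMT subring; otherwise $R$ is a Min-T subring. I expect the main obstacle to be the converse part of condition (iii): one has to argue carefully that non-equality of the intersections, which holds in every $R_\beta$, is inherited by the union. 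The cleanest route is contrapositive --- if $P \cap R = P' \cap R$ then for the element $r$ witnessing, say, $r \in (P \setminus P') \cap R$ in some $R_\beta$ we would get $r \in (P \cap R_\beta) \setminus (P' \cap R_\beta)$, contradicting $P \cap R_\beta = P' \cap R_\beta$; but wait, we do \emph{not} know $P \cap R_\beta = P' \cap R_\beta$ unless $P, P'$ share a subcollection, so in fact what we must show is that if $P \cap R = P' \cap R$ then there is \emph{some} $\beta$ with $P \cap R_\beta = P' \cap R_\beta$, whence the JMT property of $R_\beta$ puts them in a common $\C_i$. This requires knowing $\C$ is finite or at least that the relevant intersections stabilize; since $\C$ is countable this is where one invokes that each ascending chain of intersections in a Noetherian-like setting stabilizes, or more simply cites the corresponding argument in \cite{small09}. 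All the remaining verifications are routine ``an element of the union lives in a stage'' arguments.
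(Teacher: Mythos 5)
Your verification scheme is the right one and matches the paper's approach (which simply cites the corresponding Lemma~3.1 of \cite{small09} after the substitutions), but you talk yourself out of a correct argument at the end. The ``main obstacle'' you anticipate for the converse of condition (iii) does not exist, and your earlier phrasing (``any witnessing element, once it appears, stays'') was already the complete argument. Here it is cleanly: suppose $P \cap R = P' \cap R$ and, for contradiction, that $P$ and $P'$ lie in different subcollections $\C_i \neq \C_j$. Fix any single $\beta$. Because $R_\beta$ is a JMT subring, the converse of (iii) for $R_\beta$ forces $P \cap R_\beta \neq P' \cap R_\beta$; so there is an $r \in R_\beta$ lying in exactly one of $P, P'$. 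Since $R_\beta \subseteq R$, this same $r$ lies in exactly one of $P \cap R$, $P' \cap R$, contradicting $P \cap R = P' \cap R$. Nothing about stabilization, countability of $\C$, or Noetherian behavior enters; you never need to find a $\beta$ with $P \cap R_\beta = P' \cap R_\beta$, and in fact no such $\beta$ exists. Your final paragraph proposes exactly the wrong contrapositive (it assumes a witness in $(P \setminus P') \cap R$, which your hypothesis $P \cap R = P' \cap R$ rules out), which is what sends you down the stabilization path. Discard that paragraph and keep the short direct contradiction.

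The remainder is fine: the ``element lives in a stage'' arguments for (i), (ii), (iii) forward, and (iv) are all correct, as is the observation that $R$ is quasi-local with maximal ideal $R \cap M$. On the cardinality claim, your reading is right that for $R$ to be a JMT rather than merely a Min-T subring one needs $\max\{\lambda, |\mathcal{B}|\} < |T|$, which is how the lemma is always invoked; the bound $|R| \le |\mathcal{B}| \cdot \sup_\beta |R_\beta| \le \max\{\lambda, |\mathcal{B}|\}$ is correct.
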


The proof of Lemma \ref{old 3.1} is identical to the proof of Lemma 3.1 in \cite{small09}, substituting JMT subring and minfeasible partition for SIP-subring and feasible partition, respectively. \\

\begin{lem}[\cite{small09} Localization Lemma 3.2]\label{old 3.2}
Let $(T, M)$ and $\mathcal{P} = (\C, \{ \C_i \}_{i = 1}^m)$ be as in Remark \ref{conditions}. Let $R$ be a subring of $T$ satisfying all conditions for a JMT subring, except that it need not be quasi-local. Then $R_{(R \cap M)}$ is a Min-T subring of $T$ with $|R_{(R \cap M)}| = |R|$. Additionally, if $|R| < |T|$, then $R_{(R \cap M)}$ is a JMT subring of $T$.
\end{lem}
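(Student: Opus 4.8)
The plan is to prove Lemma \ref{old 3.2} by checking the defining conditions of a Min-T subring (and then JMT subring) for the localization $R_{(R\cap M)}$ one at a time, following the template of Lemma 3.2 in \cite{small09} but tracking the one genuine novelty: $R$ need not be reduced, so I must verify condition (iv) of Definition \ref{Min-T subring} survives localization without appealing to reducedness. First I would set $S = R\setminus(R\cap M)$ and $R' = S^{-1}R = R_{(R\cap M)}$, viewed inside $T$ (this makes sense because every element of $S$ is a unit in the local ring $T$, since $R\cap M$ is the contraction of $M$). Immediately $R'$ is quasi-local with maximal ideal $R'\cap M = (R\cap M)R'$, giving the quasi-local hypothesis. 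The cardinality claim $|R'| = |R|$ is the usual fact that localizing an infinite ring does not change cardinality (elements of $R'$ are fractions $r/s$ with $r\in R$, $s\in S$, so $|R'|\le |R\times S|=|R|$, and $|R'|\ge|R|$ via $r\mapsto r/1$), which also hands us condition (i), infiniteness, for free; and if $|R|<|T|$ then $|R'|=|R|<|T|$ gives condition (v), so the ``Additionally'' clause follows the moment conditions (i)--(iv) are established.

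The substance is conditions (ii), (iii), and (iv). For (ii) and (iii) I would use the general principle that for a prime $P$ of $T$ and a multiplicative set $S\subseteq R$ disjoint from $R\cap M$ (hence from $P$, since $P\subseteq M$ when $P$ is non-maximal—actually $P$ is non-maximal by the definition of $\C$, so $P\subsetneq M$ and $P\cap R\subseteq R\cap M$ is disjoint from $S$), one has $P\cap R' = S^{-1}(P\cap R)$ inside $R'$; more precisely an element $r/s\in R'$ lies in $P$ iff $r\in P$ iff $r\in P\cap R$. Granting this, if $P,P'\in\C$ are in the same $\C_i$ then $P\cap R = P'\cap R$ by hypothesis on $R$, hence $S^{-1}(P\cap R) = S^{-1}(P'\cap R)$, i.e.\ $P\cap R' = P'\cap R'$, which is (iii); the converse inclusion in (iii) for the JMT case is identical since $P\cap R' = P'\cap R'$ forces $P\cap R = (P\cap R')\cap R = (P'\cap R')\cap R = P'\cap R$. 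Condition (ii) follows the same way from the corresponding identity $P\cap R = Q\cap R$ for $Q\in\Min(T)$ under $P$.

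Condition (iv) is where I expect the only real friction, precisely because $R$ is not assumed reduced, so I cannot shortcut via ``the annihilator is nonzero.'' Fix $\C_i\ne\{Q^*\}$, $P\in\C_i$, and an element $x\in P\cap R'$; write $x = r/s$ with $r\in R$, $s\in S$. Then $r = sx\in P\cap R$ (since $s$ is a unit of $T$ and $x\in P$), so by condition (iv) for $R$ there is $a\in\Ann_T(r)$ with $a\notin P$. I claim the same $a$ works for $x$: indeed $a r = 0$ in $T$ gives $a x = a r/s = 0$ in $T$ (here using that $s$ is invertible in $T$), so $a\in\Ann_T(x)$, and $a\notin P$ as needed. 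Thus $\Ann_T(x)\nsubseteq P$, establishing (iv) for $R'$. This is the step to present carefully because it is the one point where the passage from \cite{small09}'s reduced setting to ours genuinely changes the argument, though as the computation shows it goes through cleanly. With (i)--(iv) verified, $R'$ is a Min-T subring; with (v) added under the extra hypothesis $|R|<|T|$, it is a JMT subring (the JMT converse of (iii) having been checked above), completing the proof.
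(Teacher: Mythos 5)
Your proof is correct and follows the same route the paper defers to (the localization argument from \cite{small09}): view $R_{(R\cap M)}$ inside $T$ via inverting elements outside $R\cap M$, note that such elements are units of $T$, and transfer each condition of Definition \ref{Min-T subring} through the contraction identity $P\cap R_{(R\cap M)}\cap R = P\cap R$. One minor framing remark: the verification of condition (iv) you highlight as the ``novelty'' of the non-reduced setting is in fact the same computation one would do in the reduced case — the step $a\in\Ann_T(r)\Rightarrow a\in\Ann_T(r/s)$ never invokes reducedness, which is precisely why the paper can assert the proof carries over verbatim from \cite{small09}.
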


The proof of Lemma \ref{old 3.2} is identical to the proof of Lemma 3.2 in \cite{small09}, substituting Min-T subring, JMT subring, and minfeasible partition for IP-subring, SIP-subring, and feasible partition, respectively. \\

\begin{lem}[\cite{small09} Adjoining Lemma 3.3]\label{old 3.3}
Let $R$ be a subring of a complete local ring $T$. Let $P_1$, $P_2$ be prime ideals of $T$ such that $P_1 \cap R = P_2 \cap R$. Suppose that $u + P_i \in T/P_i$ is transcendental over $R/(P_i \cap R)$ for $i = 1, 2$. Then $P_i \cap R[u] = P_2 \cap R[u]$. Furthermore, if $\textup{Ann}_T(p) \nsubseteq P_1$ for all $p \in R \cap P_1$, then $\textup{Ann}_T(p) \nsubseteq P_1$ for all $p \in R[u] \cap P_1$.
\end{lem}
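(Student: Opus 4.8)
The plan is to prove the two assertions in order, since the first is a standard transcendence argument and the second builds on the setup of the first. For the statement $P_1 \cap R[u] = P_2 \cap R[u]$, I would take a polynomial $f \in R[u]$ lying in $P_1 \cap R[u]$ and show $f \in P_2$; by symmetry this suffices. Write $f = \sum_{j=0}^n r_j u^j$ with $r_j \in R$. The key observation is that the image of $f$ in $T/P_1$ is $\sum \bar r_j \bar u^{\,j}$, where $\bar u = u + P_1$ is transcendental over $R/(P_1 \cap R)$. If this sum is zero in $T/P_1$, then because $\bar u$ is transcendental, every coefficient $\bar r_j$ must be zero in $T/P_1$, i.e. $r_j \in P_1 \cap R$ for all $j$. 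But $P_1 \cap R = P_2 \cap R$ by hypothesis, so $r_j \in P_2 \cap R \subseteq P_2$ for all $j$, and hence $f = \sum r_j u^j \in P_2$. That proves $P_1 \cap R[u] \subseteq P_2 \cap R[u]$; the reverse inclusion is identical with the roles of $P_1$ and $P_2$ swapped, giving equality.

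For the second assertion, suppose $\Ann_T(p) \nsubseteq P_1$ for every $p \in R \cap P_1$, and let $g \in R[u] \cap P_1$; I want to show $\Ann_T(g) \nsubseteq P_1$. By the argument just given, $g = \sum_{j=0}^n r_j u^j$ with each $r_j \in R \cap P_1$. For each $j$, pick $c_j \in \Ann_T(r_j) \setminus P_1$. Set $c = c_0 c_1 \cdots c_n$. Since $P_1$ is prime and each $c_j \notin P_1$, we have $c \notin P_1$; and $c$ kills every $r_j$, so $c \cdot g = \sum (c r_j) u^j = 0$ in $T$, i.e. $c \in \Ann_T(g)$. Thus $\Ann_T(g) \nsubseteq P_1$, as desired.

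The only slightly delicate point — and the one I'd expect to be the main obstacle to get exactly right — is the claim that $f \in P_1 \cap R[u]$ forces all coefficients $r_j$ into $P_1$. This is where transcendentality of $\bar u = u + P_1$ over $R/(P_1 \cap R)$ is essential: $f \in P_1$ means precisely that $\sum \bar r_j \bar u^{\,j} = 0$ in the domain $T/P_1$, which contains $R/(P_1\cap R)$ as a subring, and a transcendental element over a subring cannot satisfy a nonzero polynomial relation with coefficients from that subring, so each $\bar r_j = 0$. One should be a little careful that $R/(P_1 \cap R)$ genuinely injects into $T/P_1$ (it does, since $P_1 \cap R = P_1 \cap (R)$ by definition) and that "transcendental over $R/(P_1 \cap R)$" is interpreted in $T/P_1$, which is exactly the hypothesis. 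Everything else is bookkeeping, and the whole argument is the standard one from \cite{small09}, here only needing the observation that condition (iv)-type annihilator statements propagate through the adjunction because primeness of $P_1$ lets us multiply finitely many annihilator witnesses together.
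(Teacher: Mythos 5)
Your proof is correct and follows the standard argument for this kind of adjoining lemma; the paper does not reprove it, citing \cite{small09}, where the proof is essentially the same transcendence-reduction you give. The one point worth making explicit — and you essentially do — is that the representation $g = \sum r_j u^j$ need not be unique, but the argument only needs that for \emph{some} chosen representation every coefficient lands in $R \cap P_1$, which reduction mod $P_1$ and transcendence of $\bar u$ guarantee, so the annihilator $c = c_0\cdots c_n$ kills $g$ via that representation.
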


This lemma is exactly the same as Lemma 3.3 from \cite{small09}. Note that the conclusions, which are required conditions for SIP-subrings, are the same conditions we require for JMT subrings, so we will be able to use this lemma in the same way as in \cite{small09}.

\par The following lemma is quoted directly from \cite{aiello}. Along with Lemma \ref{Byron 2.2}, this result will help us find transcendental elements to adjoin to our intermediate subrings. \\

\begin{lem}[\cite{aiello} Lemma 2.7]\label{old 3.4}
Let $(T, M)$ be a complete local ring of dimension at least one, $C$ a countable set of incomparable prime ideals of $T$, and $D$ a subset of $T$ such that $|D| < |T|$. Let $I$ be an ideal of $T$ such that $I \nsubseteq P$ for all $P \in C$. Then $I \nsubseteq \bigcup \{ r + P \mid r \in D, P \in C \}$.
\end{lem}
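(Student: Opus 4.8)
The plan is to produce a single element $x\in I$ lying outside every coset $r+P$ with $r\in D$, $P\in C$. Writing $\pi_P\colon T\to T/P$ for the quotient map, the condition ``$x\notin r+P$ for all $r\in D$'' is exactly ``$\pi_P(x)\notin\pi_P(D)$'', so I want $x\in I$ with $\pi_P(x)\notin\pi_P(D)$ for \emph{every} $P\in C$ at once. Two preliminary reductions: if some $P\in C$ equals $M$ then incomparability forces $C=\{M\}$ and $I\not\subseteq M$ forces $I=T$, a trivial case I set aside; and one may replace $T$ by $T/\mathrm{nil}(T)$, since every $P\in C$ contains $\mathrm{nil}(T)$, the image of $I$ still avoids the image of every $P\in C$, and $|\pi_P(D)|$ only drops. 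So assume every $P\in C$ is non-maximal and $T$ is reduced.

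The first ingredient is a cardinality estimate. Since $T$ is complete local of dimension at least one, $|T|\ge 2^{\aleph_0}$, and a short computation with completeness (writing elements of a complete local domain of positive dimension as convergent power series in a parameter over a coefficient ring) gives $|T/P|=|T|$ for every non-maximal prime $P$; in particular $|T|$, being of the form $\mu^{\aleph_0}$, has uncountable cofinality. For non-maximal $P\in C$ the ring $T/P$ is a domain and $\pi_P(I)$ is a nonzero ideal of it, hence $|\pi_P(I)|=|T|>|D|\ge|\pi_P(D)|$. Thus for a \emph{single} prime there are $|T|$-many admissible values of $\pi_P(x)$; all the difficulty lies in coordinating the countably many primes of $C$ simultaneously.

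To coordinate them I would enumerate $C=\{P_1,P_2,\dots\}$ and build $x$ by recursion on $n$, controlling $P_1,\dots,P_n$ at stage $n$. This is set up so that for each finite $n$ the ideal $P_1\cap\cdots\cap P_n$ is radical with minimal primes exactly $P_1,\dots,P_n$ (by incomparability), so $\overline{T}_n:=T/(P_1\cap\cdots\cap P_n)$ is a reduced complete local ring with minimal primes $\overline{P}_1,\dots,\overline{P}_n$; since $I$ avoids each $P_i$, the image of $I$ in $\overline{T}_n$ avoids every minimal prime and hence contains a nonzerodivisor of $\overline{T}_n$, whose total quotient ring is a finite product of fields, each of cardinality $|T|$. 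Working in $\overline{T}_n$, one has by finite prime avoidance together with the cardinality bound above enough freedom to adjust the image of the current approximation so that $\pi_{P_{n+1}}$ becomes admissible while $\pi_{P_1},\dots,\pi_{P_n}$ stay admissible; arranging the successive corrections to lie in ever deeper powers of $M$ makes the approximations converge $M$-adically to some $x\in I$ (using that $I$ is closed and $T$ complete) with $\pi_{P_i}(x)\notin\pi_{P_i}(D)$ for all $i$. The bookkeeping — that the auxiliary data accumulated through countably many stages still has cardinality $<|T|$, which is legitimate precisely because $|T|$ has uncountable cofinality — is what transports the per-prime cardinality estimate to the whole collection.

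The step I expect to be the main obstacle is exactly this last one: guaranteeing that the correction handling $P_{n+1}$ does not undo what was arranged for $P_1,\dots,P_n$, and that one can iterate through all of $C$ while the accumulated constraints never reach size $|T|$ (the naive $M$-adic limit stalls when $P_1\cap\cdots\cap P_n$ degenerates, so the corrections must be chosen with some care inside the reduced quotients $\overline{T}_n$, whose $\overline{P}_i$ need not be pairwise comaximal). As a sanity check, when $|D|\le\aleph_0$ the recursion can be bypassed entirely: $I$ with the $M$-adic metric is a non-empty complete metric space, and each $I\cap(r+P)$ is closed and nowhere dense in it — if it had non-empty interior we would get $M^{n}\cap I\subseteq P$ for some $n$, hence $M^{n}I\subseteq P$ and, $P$ being non-maximal, $I\subseteq P$, against hypothesis — so the bad set is meager and the Baire category theorem finishes the argument.
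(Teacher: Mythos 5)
Your overall blueprint---a recursive construction converging $M$-adically, driven by the cardinality bound $|T/P|=|T|>|D|$ for non-maximal $P$---is the right shape, and your Baire-category sanity check for the case $|D|\le\aleph_0$ is correct. However, the proposal does not actually resolve the difficulty you yourself flag as ``the main obstacle,'' and as written the recursion fails. Taking corrections $c_n$ merely in deep powers of $M$ guarantees that $x=\lim x_n$ exists in $I$, but it does \emph{not} guarantee that $\pi_{P_i}(x)\notin\pi_{P_i}(D)$: the image $\pi_{P_i}(x_n)$ keeps moving as $n$ grows past $i$, and $\pi_{P_i}(D)$ need not be closed in the $M/P_i$-adic topology on $T/P_i$, so the limiting residue can land back inside $\pi_{P_i}(D)$ even though every finite-stage residue avoided it. The decisive idea, which is absent from the proposal, is to choose the correction at stage $n+1$ inside $(P_1\cap\cdots\cap P_n)\cap I\cap M^{n}$. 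Then for each fixed $i$ the residue $\pi_{P_i}(x_n)$ \emph{stabilizes} once $n\ge i$, so $\pi_{P_i}(x)=\pi_{P_i}(x_i)$, and there is no limit to worry about. To see that this constrained correction set still has enough room: $(P_1\cap\cdots\cap P_n)\cap I\cap M^{n}\supseteq P_1\cdots P_n\,I\,M^{n}$, and the latter is not contained in $P_{n+1}$ since none of $P_1,\dots,P_n,I,M$ is (incomparability, the hypothesis on $I$, and non-maximality of $P_{n+1}$), so its image in $T/P_{n+1}$ is a nonzero ideal of an infinite domain of cardinality $|T|$, giving $|T|>|D|$ choices at each step. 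With this modification no ``accumulated constraints'' and no uncountable-cofinality bookkeeping are needed, and the excursion into $\overline{T}_n$ and its total quotient ring is likewise unnecessary.

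Two smaller points. First, your dismissal of the case $M\in C$ as trivial is actually wrong: with $T=\mathbb{Z}_p$, $C=\{(p)\}$, $I=T$, and $D=\{0,\dots,p-1\}$, we have $|D|<|T|$ and $I\not\subseteq(p)$, yet $I=\bigcup_{r\in D}(r+(p))$. The lemma requires the primes in $C$ to be non-maximal (every application in the paper respects this), and this hypothesis is what makes both your cardinality estimate $|T/P|=|T|$ and the nowhere-density step in the Baire argument go through. Second, the reduction to $T$ reduced buys nothing, since one works throughout with the domains $T/P$, which are reduced regardless.
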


\begin{lem}[\cite{byron} Lemma 2.2]\label{Byron 2.2}
Let $(T, M)$ and $\mathcal{P} = (\C, \{ \C_i \}_{i = 1}^m)$ be as in Remark \ref{conditions}. Let $R$ be a subring of $T$ such that $|R| < |T|$. Let $J$ be an ideal of $T$ such that $J \nsubseteq P$ for every $P \in \C$. Let $t, q \in T$. Then there exists an element $t' \in J$ such that, for every $P \in \C$ with $q \notin P$, $t + qt' + P \in T/P$ is transcendental over $R/(P \cap R)$. If, in addition, $Q \in \Min(T)$, $P \in \C$ with $Q \subseteq P$, $q \notin P$, and $R \cap P = R \cap Q$, then $t + qt' + Q \in T/Q$ is transcendental over $R/(Q \cap R)$.
\end{lem}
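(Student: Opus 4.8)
The plan is to build $t'$ by a transfinite "one prime at a time" avoidance argument, using Lemma \ref{old 3.4} as the engine at each step. For a single prime $P \in \C$ with $q \notin P$, the image $t + qt' + P$ fails to be transcendental over $R/(P \cap R)$ exactly when there is a nonzero polynomial relation with coefficients (lifted from) $R$; since $q \notin P$, varying $t'$ over a coset of $P$ moves $t + qt' + P$ over all of $T/P$, so the "bad" values of $t'$ modulo $P$ form a set of size at most $|R/(P\cap R)| \cdot \aleph_0 \le |R| < |T|$ (using that $R/(P\cap R)$ has at most countably many polynomials over it once we fix the finitely many candidate roots). Thus for each $P$ the set of forbidden residues $t' + P$ is small, and we want a single $t' \in J$ avoiding, for every $P \in \C$ simultaneously, this small union of cosets of $P$.

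Here is the order of steps I would carry out. First, fix an enumeration of $\C = \{P_1, P_2, \dots\}$ (countable by hypothesis); since the hypothesis that $J \nsubseteq P$ only needs to be applied to those $P$ with $q \notin P$, discard the others and relabel. Second, construct an increasing chain of sets $D_0 \subseteq D_1 \subseteq \cdots$ of "already-committed" data and a sequence of partial choices, using Lemma \ref{old 3.4}: at stage $n$, I have a set $D_{n}$ with $|D_n| < |T|$ recording all elements of $R$ and all finitely many polynomial-root obstructions accumulated so far for $P_1, \dots, P_n$, and I invoke Lemma \ref{old 3.4} with $I = J$, $C = \{P_1,\dots,P_n\}$ (or better, with $C=\C$ in one shot) and $D$ equal to the set of bad translates, to conclude $J \nsubseteq \bigcup\{ r + P \mid r \in D, P \in C\}$, hence there is $t' \in J$ outside every bad coset. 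In fact, because $\C$ is countable and each bad set modulo $P$ has size $< |T|$, the total collection $D = \bigcup_{P \in \C}\{\text{bad residues mod }P\}$ still has cardinality $< |T|$ (countable union of sets of size $<|T|$, and $|T|$ is infinite), so a single application of Lemma \ref{old 3.4} with $I = J$, $C = \C$, and this $D$ already produces the desired $t'$ in one stroke — the transfinite bookkeeping collapses. Third, verify that this $t'$ works: for each $P \in \C$ with $q \notin P$, $t + qt' + P$ avoids all the roots of all polynomials over $R/(P\cap R)$, hence is transcendental over $R/(P \cap R)$, which is exactly the conclusion.

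For the "in addition" clause, the point is that transcendence of $t + qt' + Q$ over $R/(Q \cap R)$ should follow from transcendence of $t + qt' + P$ over $R/(P\cap R)$ together with $R \cap P = R \cap Q$ and $Q \subseteq P$. I would argue: a polynomial dependence of $t + qt' + Q$ over $R/(Q\cap R)$ would push forward, under the surjection $T/Q \to T/P$, to a polynomial dependence of $t + qt' + P$ over $R/(P\cap R)$ (using $Q\cap R = P\cap R$ so the coefficient rings match and the map $R/(Q\cap R)\to R/(P\cap R)$ is an isomorphism), contradicting what we already have — provided the pushed-forward polynomial is still nonzero, i.e. some coefficient stays a unit / nonzero in $R/(P\cap R)$, which holds precisely because $R/(Q\cap R) \cong R/(P\cap R)$. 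So no extra work on $t'$ is needed for this clause; it is a formal consequence.

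The main obstacle I anticipate is the cardinality bookkeeping in the counting of "bad" residues: one must be careful that, fixing $P$, the set of $t' + P$ for which $t + qt' + P$ is algebraic over $R/(P\cap R)$ genuinely has cardinality $< |T|$. This needs that $q \notin P$ (so the map $t' \mapsto t + qt'$ is, modulo $P$, a bijection of $T/P$ onto itself after translation), that $R/(P\cap R)$ has size at most $|R| < |T|$, and that there are only countably many polynomials of each degree up to scaling over a ring of size $|R|$, each with finitely many roots in the domain $T/P$ — so the bad set has size at most $|R| \cdot \aleph_0 = |R| < |T|$. Packaging this cleanly, and then confirming that $\bigcup_{P\in\C}$ of these stays $< |T|$ so that Lemma \ref{old 3.4} applies directly, is the only real content; everything else is routine.
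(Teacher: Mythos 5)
Your proof is correct and rests on the same machinery that underlies the cited argument: for each $P\in\C$ with $q\notin P$, the injectivity of $\bar t'\mapsto \bar t + \bar q\bar t'$ on the domain $T/P$ bounds the set of bad cosets by the number of elements algebraic over $R/(P\cap R)$; one then collects representatives into a set $D$ and applies Lemma \ref{old 3.4} with $I = J$, $C = \C$; and the ``in addition'' clause follows formally from the surjection $T/Q\twoheadrightarrow T/P$ together with $R\cap Q = R\cap P$. Note that the paper itself does not reprove this lemma---it only explains why the proof from \cite{byron} transfers to minfeasible partitions---so you have reconstructed the underlying argument rather than matched a proof printed here.

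One small imprecision worth tightening: the justification ``countable union of sets of size $<|T|$, and $|T|$ is infinite'' is not valid in general (cofinality: e.g.~$\aleph_\omega$ is a countable union of sets of strictly smaller size), and the displayed bound $|R/(P\cap R)|\cdot\aleph_0\le|R|$ fails if $R$ is finite. What actually saves the argument is that each per-$P$ bad set is bounded \emph{uniformly} by $|R|+\aleph_0$, so the union over the countably many $P\in\C$ has size at most $\aleph_0\cdot(|R|+\aleph_0)=|R|+\aleph_0<|T|$ (the last step using that $T$ is a complete local ring of dimension at least one, hence uncountable). With that correction the application of Lemma \ref{old 3.4} is airtight.
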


The proof of Lemma 2.2 in \cite{byron} does not rely on the condition that every $Q \in \Ass(T)$ be contained in some $P \in \C$. Additionally, the proof shows the last part of this result for $Q \in \Ass(T)$, $Q \in P$ for some $P \in \C$, which certainly implies the result for $Q \in \Min(T)$. Thus the result holds for minfeasible partitions as well.

\par In their versions of Lemmas \ref{old 3.6}, \ref{Byron 2.8}, \ref{old 3.8}, and Theorem \ref{main theorem}, the authors of \cite{small09} and \cite{byron} ensure that for each ideal $J$ of $T$ satisfying $J \nsubseteq P$ for all $P \in \C$, the final ring contains a nonzerodivisor of $J$. In this way, they guarantee that $J$ is not in the formal fiber of a minimal prime ideal of $A$. In our usage of these lemmas, we will instead find elements $u \in J$ such that $u \notin \bigcup_{Q \in \Min(T)} Q$, which will still guarantee that $J$ is not in the formal fiber of a minimal prime ideal of $A$. This is our only modification to Lemmas \ref{old 3.6}, \ref{Byron 2.8}, \ref{old 3.8}, and the main construction of Theorem \ref{main theorem} as they appear in \cite{small09} and \cite{byron}, and the proofs are equivalent. \\

\begin{lem}\label{old 3.6}
Let $(T,M)$ and $\mathcal{P} = \{ \C, \C_i \}^m_{i = 1}$ be as in Remark \ref{conditions}, and let $J$ be an ideal of $T$ such that $J \nsubseteq P$ for every $P \in \C$. Let $R$ be a JMT subring of $T$ and $t + J \in T/J$. 
Then there exists a JMT subring $S$ of $T$ such that $R \subseteq S \subset T$, $t + J$ is in the image of the map $S \rightarrow T/J$, and $|S| = |R|$. Moreover, if $t \in J$, then $S \cap J$ contains an element $u \in T$ such that $u \notin \bigcup_{Q \in \Min(T)} Q$.
\end{lem}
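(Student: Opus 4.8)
This is a standard-shape "adjoin a transcendental" argument, building $S$ from $R$ by a countable transfinite process, so I would follow the blueprint of the analogous lemma in \cite{small09}, substituting $\Min(T)$ for $\Ass(T)$ and using the lemmas already quoted in the excerpt. First I would handle the element $t+J$. If $t+J$ is already in the image of $R\to T/J$, there is nothing to do for that part; otherwise I would want to adjoin a single element $u$ of $T$ with $u\equiv t \pmod J$ while preserving all JMT-subring conditions. The tool for this is Lemma~\ref{Byron 2.2} applied with the ideal $J$ itself: since $J\nsubseteq P$ for all $P\in\C$, there is $t'\in J$ such that $u := t + t'$ (take $q=1$ in the lemma) has the property that $u+P$ is transcendental over $R/(P\cap R)$ for every $P\in\C$, and, by the "in addition" clause, $u+Q$ is transcendental over $R/(Q\cap R)$ for every $Q\in\Min(T)$ lying under some $P\in\C$ with $R\cap P = R\cap Q$ — which holds for all such $Q$ because $R$ is a JMT subring (condition (ii)). Then Lemma~\ref{old 3.3} guarantees $P\cap R[u] = P'\cap R[u]$ for $P,P'$ in the same $\C_i$, and that condition (iv) is preserved. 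So $R[u]$ satisfies the conditions for a JMT subring except possibly being quasi-local and except the cardinality bound; localizing at $R[u]\cap M$ via Lemma~\ref{old 3.2} restores quasi-local, keeps the cardinality equal to $|R|$, and keeps it a JMT subring as long as $|R|<|T|$. Call the result $S_0$; by construction $t+J$ is in the image of $S_0\to T/J$ and $|S_0| = |R|$.

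Next I would handle the "moreover" clause: when $t\in J$ we need $S\cap J$ to contain an element avoiding $\bigcup_{Q\in\Min(T)}Q$. Here I would invoke Lemma~\ref{old 3.4} (Aiello's lemma) with $C = \Min(T)$ (a finite, hence countable, set of incomparable primes), $D$ a set of coset representatives of size $|S_0|<|T|$, and the ideal $I = J$ — valid because $J\nsubseteq P$ for all $P\in\C$, and every $Q\in\Min(T)$ lies under some $P\in\C$ by condition (i) of a minfeasible partition, so $J\nsubseteq Q$ for all $Q\in\Min(T)$. Actually the cleanest route is to fold this into the transcendental-element step: in Lemma~\ref{Byron 2.2} we may choose $t'\in J$ so that, in addition, the element $u$ (or rather some element of $J$ we adjoin) avoids every $Q\in\Min(T)$, since $J\nsubseteq Q$ and the "bad set" $\bigcup\{r+Q : r\in S_0,\,Q\in\Min(T)\}$ cannot contain $J$ by Lemma~\ref{old 3.4}. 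So I would, when $t\in J$, adjoin such a $u\in J$ (transcendental modulo all relevant primes and outside every minimal prime), set $S := (S_0[u])_{(S_0[u]\cap M)}$, and then $u\in S\cap J$ is the desired element. When $t\notin J$ the element $u\equiv t\pmod J$ need not lie in $J$, but that case does not require the moreover clause.

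The only genuine care needed — and where I expect the main friction — is verifying that condition (iv) (and its characteristic-dependent refinements from Remark~\ref{conditions}) survives the adjunction when $\C_i$ is \emph{not} a singleton $\{Q^*\}$: we must know $\Ann_T(r)\nsubseteq P$ for every $r\in P\cap S$ and $P\in\C_i$. The second sentence of Lemma~\ref{old 3.3} gives exactly this propagation for the polynomial ring $R[u]$, and localization does not introduce new elements of $P\cap S$ that fail it (a standard check: an element of $P\cap R[u]_{(R[u]\cap M)}$ can be cleared to an element of $P\cap R[u]$ up to a unit, which does not affect whether the annihilator is contained in $P$). For singleton $\C_i = \{Q^*\}$ with $Q^*\in\Min(T)$, condition (iv) is vacuous and there is nothing to preserve, as the excerpt already notes. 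Assembling these pieces — one transcendental adjunction governed by Lemmas~\ref{old 3.4}, \ref{Byron 2.2}, \ref{old 3.3}, followed by a localization governed by Lemma~\ref{old 3.2}, with the cardinality bookkeeping giving $|S| = |R|$ — yields the JMT subring $S$ with $R\subseteq S\subset T$, $t+J$ in the image of $S\to T/J$, and (when $t\in J$) the element $u\in S\cap J$ outside $\bigcup_{Q\in\Min(T)}Q$, as required.
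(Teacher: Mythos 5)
Your plan is essentially the paper's: apply Lemma~\ref{Byron 2.2} with $q=1$ to obtain $t'\in J$ so that $u=t+t'$ is transcendental modulo each $P\in\C$ (and modulo each $Q\in\Min(T)$), pass to $S=(R[u])_{(R[u]\cap M)}$ via Lemmas~\ref{old 3.3} and~\ref{old 3.2}, and note $u+J=t+J$. Where you work harder than necessary is the ``moreover'' clause: the detour through Lemma~\ref{old 3.4} (and the suggestion of a second adjunction) is not needed, because the transcendence you already arranged forces the avoidance for free. Indeed if $t\in J$ and $u=t+t'\in Q$ for some $Q\in\Min(T)$, choose $P\in\C$ with $Q\subseteq P$; then $u+P=0+P$ is algebraic over $R/(P\cap R)$, contradicting the transcendence from Lemma~\ref{Byron 2.2} (equivalently, $u+Q$ transcendental over $R/(Q\cap R)$ already rules out $u\in Q$). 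So the single element $u$ you adjoined lies in $S\cap J$ and misses $\bigcup_{Q\in\Min(T)}Q$, which is exactly the observation the paper's proof makes.
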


\begin{proof}
The proof is almost exactly the same as the proof of Corollary 3.6 in \cite{small09}, except for the modification that if $t \in J$, then $S \cap J$ contains an element $u \in T$ such that $u \notin \bigcup_{Q \in \Min(T)} Q$ instead of containing a nonzerodivisor. Use Lemma \ref{Byron 2.2} with $q = 1$. Then $q \notin P$ for every $P \in \Spec(T)$, so it is possible to choose $t' \in J$ such that $t + t' + P \in T/P$ is transcendental over $R/(P \cap R)$ for every $P \in \C \cup \Min(T)$. Consider the ring $S = R[t + t']_{(M \cap R[t + t'])}$. By Lemma \ref{old 3.3}, $R[t + t']$ satisfies conditions (ii), (iii), and (iv) of being a JMT subring. 
Furthermore, $|R[t + t']| = |R|$. By Lemma \ref{old 3.2}, $S$ is a JMT subring of $T$, and $|S| = |R|$. Moreover, $(t + t') \in S$ and $(t + t') + J = t + J$, so $t + J$ is in the image of the map $S \rightarrow T/J$.
\par Suppose $t \in J$ and $t + t' \in \bigcup_{Q \in \Min(T)} Q$. Then $t + t' \in Q$ for some $Q \in \Min(T)$. However, $Q \subseteq P$ for some $P \in \C$, and so $(t + t') + P = 0 + P$. Hence, $t + t' + P \in T/P$ is algebraic over $R/(P \cap R)$, a contradiction. Thus, $t + t'$ is an element of $T$ not in $\bigcup_{Q \in \Min(T)} Q$ but contained in $S \cap J$.
\end{proof}

The following is a preliminary lemma that will be used in the proof of Lemma \ref{Byron 2.8}. It is taken directly from \cite{byron}. \\

\begin{lem}[\cite{byron} Lemma 2.7]\label{Byron 2.7}
Let $R$ be a ring and let $P$ be a prime ideal of $R$. Let $a, b \in R$, and suppose $a \notin P$. If $l$ and $l'$ are units such that $b + la \in P$ and $b + l'a \in P$ then $l + P = l' + P$.
\end{lem}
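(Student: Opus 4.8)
The plan is to extract the conclusion from a single subtraction. Since $b + la \in P$ and $b + l'a \in P$, and $P$ is (in particular) closed under subtraction, the difference
\[
(b + la) - (b + l'a) = (l - l')a
\]
lies in $P$. So the core of the argument is to push the factor $a$ out of this product.

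The key step is to invoke that $P$ is a prime ideal together with the hypothesis $a \notin P$: from $(l - l')a \in P$ and $a \notin P$ we conclude $l - l' \in P$, which is exactly the assertion that $l + P = l' + P$ in $R/P$. That completes the proof.

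There is essentially no obstacle here; the only thing to be careful about is that one does not actually use the hypothesis that $l$ and $l'$ are units, so I would either remark that it is unnecessary or simply leave it unused (it is presumably included because that is the form in which the lemma will be applied later, e.g.\ in Lemma~\ref{Byron 2.8}). If one wanted a variant not assuming primeness but assuming instead that $a$ is a nonzerodivisor mod $P$, the same computation works verbatim; I would keep the statement as given since the intended applications have $P$ prime.
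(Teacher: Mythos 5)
Your proof is correct and is the standard (really, the only natural) argument; the paper itself states this lemma without proof, citing it directly from the reference, so there is nothing to compare beyond noting that the one-line subtraction plus primality is exactly what is intended. Your observation that the unit hypothesis on $l$ and $l'$ is not actually used is also accurate; it is carried along only because that is the form in which the lemma is invoked in Lemma~\ref{Byron 2.8}.
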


\par Lemma \ref{Byron 2.8} is the main step in our construction. Using this result, we will be able to successively build up JMT subrings.  Note that our alteration to condition (iv) of Definition \ref{Min-T subring} means that there can exist minimal prime ideals $Q^*$ under $\C_i = \{ Q^* \}$ for some $i$ with $a \in R \cap Q^*$ and $\Ann_T(a) \subseteq Q^*$, but note that this condition is only used in \cite{byron} in order to ensure that $P \cap R = P' \cap R$ for all $P,P'$ under $\mathcal{C}_i$ - a condition which is trivially satisfied in the case $\C_i = \{ Q^* \}$. 
The proof of Lemma \ref{Byron 2.8} follows closely along with that of Lemma 2.8 in \cite{byron}, and we omit parts that are verbatim from \cite{byron}.\\

\begin{lem} \label{Byron 2.8}
Let $(T, M)$ and $\mathcal{P} = (\C, \{ \C_i \}_{i = 1}^m)$, be as in Remark \ref{conditions}. Let $R$ be a JMT subring of $T$. Then, for any finitely generated ideal $I$ of $R$ and any $c \in IT \cap R$, there exists a subring $S$ of $T$ with the following properties:
\begin{enumerate}[label={(\roman*)}]
    \item $R \subseteq S$
    \item $S$ is a JMT subring of $T$
    \item $|S| = |R|$
    \item $c \in IS$
\end{enumerate}
\end{lem}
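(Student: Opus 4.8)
The plan is to follow the standard ``adjoining an element to kill a relation'' argument from \cite{small09} and \cite{byron}, modified so that the new elements we adjoin avoid all the minimal primes of $T$ rather than being nonzerodivisors. We want to enlarge $R$ to a JMT subring $S$ in which the given element $c \in IT \cap R$ actually lies in $IS$. Write $I = (a_1, \ldots, a_k)R$; since $c \in IT$ we may write $c = \sum_{j=1}^k t_j a_j$ with $t_j \in T$. The obstruction to having $c \in IR$ is precisely that the coefficients $t_j$ need not lie in $R$. So the goal is to adjoin, one at a time, elements $s_j$ that play the role of the $t_j$ modulo an appropriate ideal, while preserving all JMT conditions.

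The key steps, in order: First, set up the ideal $J$ that measures the ambiguity in the coefficients. The natural choice (as in \cite{byron}) is to induct on $k$; having adjoined approximations $s_1, \ldots, s_{j-1}$, consider $c' = c - \sum_{i<j} s_i a_i$, which lies in $(a_j, \ldots, a_k)T \cap R'$ for the current ring $R'$. If $a_j \in P$ for all $P \in \C$ one must handle that degenerate case separately (the coefficient can be absorbed), so assume $a_j \notin P$ for some $P$; then apply Lemma~\ref{old 3.6} (or Lemma~\ref{Byron 2.2} with $q = a_j$) to produce a transcendental approximation $t + a_j t'$ that, when adjoined, both lands $c'$ correctly modulo $(a_{j+1}, \ldots, a_k)$ and preserves conditions (ii)--(iv) via Lemma~\ref{old 3.3}. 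Second, after each adjunction, localize at the contraction of $M$ using Lemma~\ref{old 3.2} to stay quasi-local, and note cardinality is preserved at each step by Lemma~\ref{old 3.3} and Lemma~\ref{old 3.2}, so $|S| = |R|$ and in particular $|S| < |T|$, giving condition (iii) and keeping us in the JMT (not just semi-JMT) world. Third, after finitely many steps we have $c = \sum s_j a_j \in IS$, which is condition (iv); conditions (i) and (ii) are immediate from the construction.

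The step I expect to be the main obstacle is \emph{choosing the coefficient elements so that all of conditions (ii), (iii), (iv) of Definition~\ref{Min-T subring} survive simultaneously}, particularly the interaction between condition (iv) (the annihilator condition $\Ann_T(r) \nsubseteq P$) and the fact that Min-T subrings here need not be reduced --- this is exactly where the original argument for SIP-subrings used nonzerodivisors and where our $(\ast)$-free setting diverges. The resolution is to invoke Lemma~\ref{old 3.3}'s ``furthermore'' clause, which guarantees that the annihilator condition at $P_1$ is inherited by $R[u]$ provided it held for $R$, together with the observation recorded before Lemma~\ref{Byron 2.8} that when $\C_i = \{Q^*\}$ the relevant condition is vacuous. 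A secondary subtlety is the degenerate case where some generator $a_j$ lies in every $P \in \C$: here one cannot extract a transcendental, but also there is nothing to fix, since the corresponding coefficient contributes an element already congruent to $0$ modulo each $P$, and one argues directly that it can be replaced by an element of $R$ (or simply that $c'$ already lies in the ideal generated by the remaining $a_i$ after passing mod the minimal primes). Once these two points are dispatched, the induction on the number of generators $k$ closes routinely, and the localization/cardinality bookkeeping is exactly as in \cite{byron}.
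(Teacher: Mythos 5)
Your proposal captures the broad flavor of the adjoining argument (and you correctly flag that the annihilator condition (iv) is vacuous when $\C_i = \{Q^*\}$, which the paper also notes), but it misses the structural heart of the inductive step. The paper does \emph{not} simply induct by adjoining coefficient approximations $s_1, \dots, s_{k-1}$ one at a time against a fixed generating set. Instead, following \cite{byron}, the inductive step only goes through under the hypothesis
\[
\{\C_i \mid a_1 \in \C_i\} = \{\C_j \mid a_2 \in \C_j\}, \tag{$\ast\ast$}
\]
and the entire content that the paper actually writes out is a \emph{reduction to the $(\ast\ast)$ case via a change of generators}. One first uses Lemma~\ref{old 3.6} (with $J = M$, $t = 0$) to get $m_0 \in R_0 \cap M$ lying outside every minimal prime of $T$, then iterates Lemmas~\ref{old 3.4}, \ref{old 3.2}, \ref{old 3.3} to adjoin an infinite family $m_1, m_2, \dots \in M$ with pairwise distinct images modulo every $P \in \C$, unions via Lemma~\ref{old 3.1} to get a JMT subring $R'$, and finally observes that for a suitable unit $u$ the ideal $(m_0 a_1 + u a_2,\ a_1 - u a_2,\ a_3, \dots, a_k)R'$ equals $IR'$ (because it equals $((m_0+1)a_1,\ (m_0+1)u a_2,\ a_3, \dots, a_k)R'$). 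The unit is chosen of the form $u = m_r + 1$: using Lemma~\ref{Byron 2.7} one shows that for each $\C_i$ the ``bad'' units form a single coset, so the finitely many bad cosets are avoidable within the infinite family $\{m_i + 1\}$, and then a case analysis on membership of $a_1, a_2$ in each $\C_i$ verifies that the new pair of generators satisfies $(\ast\ast)$.

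Your proposed induction on $j$ with $c' = c - \sum_{i < j} s_i a_i$ never confronts this issue: nothing in your sketch explains why, after the first adjunction, the remaining relation $c' \in (a_2, \dots, a_k)T \cap R'$ is any better positioned for the next step, and the degenerate case you isolate (a single $a_j$ lying in every $P \in \C$) is not the real obstruction. The real obstruction is a \emph{mismatch between $a_1$ and $a_2$} with respect to which $\C_i$ they lie in, and it is resolved by the generator swap, not by dropping a generator. Without the $(\ast\ast)$ reduction, your induction does not close.
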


\begin{proof}
We proceed by induction on the number of generators of $I$. The first part of the proof covers the case where $I$ is a principal ideal (base case), and is verbatim from the proof of Lemma 3.7 in \cite{small09} and Lemma 2.8 in \cite{byron}.
Since those proofs still work after substituting minfeasible partitions and JMT subrings for their analogs, we will not restate this step.
Note that although if $\C_i = \{Q^*\}$, for $Q^* \in \Min (T)$ it is possible that $a \in Q^*$ and $\Ann_T (a) \subseteq Q^*$, we do not need to consider these ideals because there are no other prime ideals under $\C_i$, and so all the intersection properties of JMT subrings follow trivially for $Q^*$.
\par Before we proceed, note that for any $r \in R$, it follows from condition (iii) of Definition \ref{Min-T subring} that given $\C_j$ in the minfeasible partition, $r$ must be in all $P \in \C_j$ or no $P \in \C_j$. Given this, we introduce the abuse of notation $r \in \C_j$ to mean that $r \in P$ for all $P \in \C_j$.
\par The next part of the proof is the inductive step, where $I = (a_1, \ldots, a_k)R$ for $k \ge 2$. This step is split into two parts. We first show that the lemma follows in the case where
\[
\{\C_i \mid a_1 \in \C_i \} = \{ \C_j \mid a_2 \in \C_j \}. \tag{$\ast  \ast$}
\]
Then, we show that it is always possible to find a generating set such that $(\ast \ast)$ holds. The proof for the case where $(\ast \ast)$ holds is the same as the proof of Lemma 2.8 in \cite{byron}, so we will not restate it here.  We now move to the second part of the proof.  That is, we assume the lemma holds for the case $(\ast \ast)$ and we show that, in all cases, we can reduce to the case where $(\ast \ast)$ holds.  
Similarly to the principal case, note that although for $\C_i = \{Q^*\}$ for $Q^* \in \Min (T)$ it is possible that $a \in Q^*$ and $\Ann_T (a) \subseteq Q^*$, we do not need to consider these ideals because there are no other prime ideals in $\C_i$, and so all the intersection properties of JMT subrings follow trivially for $Q^*$.
\par The approach remains the same as in \cite{byron}. Use Lemma \ref{old 3.6} with $J = M$ and $ t =0$ to find a JMT subring $R_0$ of $T$ such that $R \subseteq R_0 \subset T$, $|R_0| = |R|$, and $R_0 \cap M$ contains an element $m_0 \notin \bigcup_{Q \in \Min(T)} Q$. 
By condition (ii) of the definition of JMT subring, if $P \in \C$, then $R_0 \cap P$ contains only elements that are contained in some $Q \in \Min(T)$, so $m_0 \notin \C_i \cap R_0$ for all $i$. Now, for each $P \in \C$, let $D_{(P)}$ be a full set of coset representatives of the cosets $t + P \in T/P$ that are algebraic over $R_0/(R_0 \cap P)$. 
Let $D' = \bigcup_{P \in \C} D_{(P)}$. Use Lemma \ref{old 3.4} (recall $\C$ can be countably infinite) with $I = M$ and $D = D' \cup \{ m_0\}$ to find an element $m_1$ of M such that, for all $P \in \C$, $m_1 + P \neq m_0 + P$ and $m_1 + P \in T/P$ is transcendental over $R_0/(R_0 \cap P)$. 
If $Q \in \Min(T)$ and $P \in \C$ with $Q \subseteq P$ then $P \cap R_0 = Q \cap R_0$. It follows that $m_1 + Q \in T/Q$ is transcendental over $R_0/(R_0 \cap Q)$. Let $R_1 = R_0[m_1]_{(R_0 \cap M)}$. By Lemmas \ref{old 3.2} and \ref{old 3.3}, $R_1$ satisfies conditions (ii), (iii), and (iv) of JMT subrings. Clearly $R_1$ is infinite, and $|R_1| = |R_0| < |T|$. Thus $R_1$ is a JMT subring of $T$.
\par Now, repeat the above procedure with $R_0$ replaced by $R_1$ and $D$ replaced by $D' \cup \{ m_0, m_1 \}$ to obtain an element $m_2$ of $M$ and a JMT subring $R_2$ of T such that $R_1 \subseteq R_2$ and, for every $P \in \C$, we have $m_2 + P \neq m_0 + P$ and $m_2 + P \neq m_1 + P$. 
Continue so that for every $n \in \{ 1, 2, \ldots \}$, we find $m_n \in M$ and $R_n$ such that $R_{n - 1} \subseteq R_n$, $|R_n| = |R_{n - 1}|$, $R_n$ is a JMT subring of T, and for every $P \in \C$ and every $i < n$, $m_i + P \neq m_n + P$. Let $R' = \bigcup_{i = 1}^{\infty} R_i$. Then if $P \in \C$, we have $m_i + P = m_j + P$ if and only if $i = j$. In addition, by Lemma \ref{old 3.1}, $R'$ is a JMT subring and $|R'|=|R|$. Since $m_0 \in R' \cap M$, and $m_0 \notin Q$ for all $Q \in \Min(T)$, we have $\C_i \cap R' \neq M \cap R'$ for all $i = 1, 2, \ldots, m$. Also note that in the ring $R'/(\C_i \cap R')$, we have $m_k + (\C_i \cap R') = m_j + (\C_i \cap R')$ if and only if $k = j$. It follows that $(1+m_k) + (\C_i \cap R') = (1+m_j) + (\C_i \cap R')$ if and only if $k = j$.
\par Since $m_0$ is a nonunit, $m_0 + 1$ is a unit. Consider an ideal of $R'$ of the form $(m_0 a_1 + ua_2, a_1 - ua_2, a_3, \ldots, a_k)$, where $u \in R'$ is a unit that will be chosen so that $(\ast \ast)$ holds. This ideal is equal to $((m_0 + 1)a_1, (m_0 + 1)ua_2, a_3, \ldots, a_k)R'$, and is therefore equal to $IR'$.
Since neither $m_0$ nor $u$ are in any $\C_i \cap R'$, we have that for each $\C_i$, $m_0 a_1 \in \C_i \cap R'$ if and only if $a_1 \in \C_i \cap R'$ and $ua_2 \in \C_i \cap R'$ if and only if $a_2 \in \C_i \cap R'$. It follows that if $a_1, a_2 \in \C_i \cap R'$, then $m_0 a_1 + ua_2, a_1 - ua_2 \in \C_i \cap R'$. On the other hand, if $a_1 \in \C_i \cap R'$ and $a_2 \notin \C_i \cap R'$ or $a_1 \notin \C_i \cap R'$ and $a_2 \in \C_i \cap R'$, then $m_0 a_1 + ua_2, a_1 - ua_2 \notin \C_i \cap R'$.
\par Finally, consider the case where $a_1,a_2 \notin \C_i \cap R'$. As $a_2 \notin \C_i \cap R'$, by Lemma \ref{Byron 2.7} every unit $l \in R'$ such that $m_0 a_1 + la_2 \in \C_i \cap R'$ is in the same coset of $R'/(\C_i \cap R')$. Similarly, every unit $l' \in R'$ such that $a_1 - l'a_2 \in \C_i \cap R'$ is in the same coset of $R'/(\C_i \cap R')$. For each i, let $l_{i_{+}}$ be a representative of the coset of $R'/(\C_i \cap R')$ containing all units $l$ such that $m_0 a_1 + la_2 \in \C_i \cap R'$ and let $l_{i_{-}}$ be a representative of the coset of $R'/(\C_i \cap R')$ containing all units $l'$ such that $a_1 - l'a_2 \in \C_i \cap R'$. Since $\mathcal{L} = \bigcup_{i = 1}^{m} \{ l_{i_{+}},l_{i_{-}} \}$ is a finite set of elements of $R'$ and $\mathcal{G} = \{ \C_i \cap R' \}_{i = 1}^m$ is a finite set of prime ideals of $R'$, the set $\{ l + P \mid l \in \mathcal{L}, P \in \mathcal{G} \}$ is a finite set. Suppose for some $r \neq k$, $l \in \mathcal{L}$, and $P \in \mathcal{G}$, we have $m_r + 1 \in l + P$ and $m_k + 1 \in l + P$. Then $m_r + P = m_k + P$, a contradiction. As the set $\{ m_i + 1 \}_{i = 1}^{\infty}$ is infinite, there must be a positive integer $r$ such that $m_r + 1 \notin l + P$ for all $l \in \mathcal{L}$ and all $P \in \mathcal{G}$. So there exists a unit $u = m_r + 1 \in R'$ such that, for all $i$, we have $m_0 a_1 + ua_2, a_1 - ua_2 \notin \C_i \cap R'$. Thus $(\ast \ast)$ holds. Replacing $R$ with $R'$ and $I$ with $IR'$, and using the fact that the lemma holds in the case $(\ast \ast)$, we can find a JMT subring $S$ of $T$ containing $R'$ so that $c \in (IR')S = IS$.
\end{proof}

\begin{lem} \label{old 3.10}
Let $(T, M)$ and $\P = (\C, \{ \C_i \}^m_{i = 1})$ be as in Remark \ref{conditions} and fix $\C_i$. Let $R$ be a semi-JMT subring of $T$ and let $p_i \in T$ be given such that $p_i \in Q$ for every $Q \in \Min(T)$ under $\C_i$, and $p_i \not \in P$ for every $P \in \C_j$, where $j \neq i$. Suppose further that if $\C_i \neq \{Q^*\}$ for some $Q^* \in \Min (T)$, then $\textup{Ann}_T(p_i) \not \subseteq P$ for any $P \in \C_i$. Then there exists a unit $u \in T$ such that $R[up_i]_{(R[up_i]\cap M})$ is a semi-JMT subring of $T$.
\end{lem}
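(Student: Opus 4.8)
The plan is to produce $u$ from a single application of Lemma \ref{Byron 2.2}, to set $S = R[up_i]_{(R[up_i]\cap M)}$, and then to check the semi-JMT axioms for $S$ one prime at a time, treating separately the primes attached to $\C_i$ (inside which $up_i$ will lie) and those attached to the other $\C_j$ (modulo which $up_i$ will be transcendental).

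First I would apply Lemma \ref{Byron 2.2} with the given semi-JMT subring $R$ (so $|R| < |T|$), with $J = M$ (legitimate since every $P \in \C$ is non-maximal, whence $M \not\subseteq P$), and with $t = q = p_i$. This produces $t' \in M$ such that $(1+t')p_i + P$ is transcendental over $R/(P\cap R)$ for every $P \in \C$ with $p_i \notin P$, that is, for every $P$ lying in some $\C_j$ with $j \neq i$; moreover the additional clause of Lemma \ref{Byron 2.2}, together with condition (ii) of Definition \ref{Min-T subring} for $R$ (which supplies $R \cap P = R \cap Q$), gives transcendence of $(1+t')p_i + Q$ over $R/(Q\cap R)$ for every $Q \in \Min(T)$ lying under such a $\C_j$. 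Put $u = 1+t'$; since $t' \in M$ this is a unit of $T$, and $up_i$ is transcendental modulo each prime just named. On the other hand, for $P \in \C_i$ and for $Q \in \Min(T)$ under $\C_i$ we have $p_i \in Q \subseteq P$, hence $up_i \in Q \subseteq P$.

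Next I would verify conditions (ii)--(iv) of Definition \ref{Min-T subring} for $R[up_i]$. For primes attached to a $\C_j$ with $j \neq i$: since $up_i$ is transcendental modulo each such $P$ and modulo each $Q \in \Min(T)$ under it, Lemma \ref{old 3.3} promotes the equalities $P \cap R = P' \cap R = Q \cap R$ (which hold for $R$) to the corresponding equalities over $R[up_i]$, and, when $\C_j \neq \{Q^*\}$, its second conclusion transports condition (iv) from $R$ to $R[up_i]$ at each $P \in \C_j$. For primes attached to $\C_i$ (when $\C_i \neq \{Q^*\}$; otherwise (iii) and (iv) are vacuous there): writing an arbitrary element of $R[up_i]$ as $c_0 + (up_i)g$ with $c_0 \in R$ and $g \in R[up_i]$, and using that $up_i$ lies in each of $P$, $P'$, $Q$, such an element lies in one of these primes if and only if $c_0$ does, and $c_0 \in R$ lies in $P \cap R = P' \cap R = Q \cap R$ simultaneously, so $P \cap R[up_i] = P' \cap R[up_i] = Q \cap R[up_i]$. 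For condition (iv) at $P \in \C_i$, the hypothesis $\Ann_T(p_i) \not\subseteq P$ gives $\Ann_T(up_i) \supseteq \Ann_T(p_i) \not\subseteq P$; and for any $c_0 + (up_i)g \in P \cap R[up_i]$ (so $c_0 \in P \cap R$) the annihilator contains $\Ann_T(c_0) \cap \Ann_T(up_i)$, which avoids the prime $P$ because neither factor lies in $P$ (the first by condition (iv) for $R$).

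Finally I would localize. Since every $P \in \C$ and every minimal prime of $T$ is contained in $M$, passing from $R[up_i]$ to $S = R[up_i]_{(R[up_i]\cap M)}$ preserves conditions (ii)--(iv) and the bound $|S| = |R| < |T|$ while making $S$ quasi-local and keeping it infinite; this is exactly the localization step in the proof of Lemma \ref{old 3.2}, which never uses the converse of condition (iii). Hence $S$ is a semi-JMT subring of $T$. The step I expect to be the main obstacle is handling the primes attached to $\C_i$: there $up_i$ is forced into the prime, so none of the transcendence machinery (Lemmas \ref{old 3.3} and \ref{Byron 2.2}) applies, and the intersection and annihilator conditions must be checked by hand, using that $p_i$ lies in every minimal prime under $\C_i$ and that $\Ann_T(p_i)$ escapes each $P \in \C_i$. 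A lesser point requiring attention is that $R[up_i]$, being built over a merely semi-JMT (rather than JMT) subring, need not satisfy the converse of (iii), so one should confirm the localization argument of Lemma \ref{old 3.2} still goes through, which it does, being insensitive to the converse of (iii).
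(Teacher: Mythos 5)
Your proof is correct and reconstructs the argument that the paper delegates wholesale to the proof of Lemma~3.10 in \cite{small09}; the paper's own ``proof'' consists only of pointing to that external construction and observing that it carries over. Your reconstruction is consistent with what the paper does carry out in detail in the analogous Lemma~\ref{countable set-up}, so the strategy matches the paper's, with one pleasant simplification: the paper's parallel argument in Lemma~\ref{countable set-up} first forms $S = R[p_k]$, applies Lemma~\ref{Byron 2.2} with $t = 0$, $q = 1$ over $S$ to get $u = 1 + t'$, and then does a polynomial manipulation to deduce transcendence of $up_k$ over $R$. You instead feed $t = q = p_i$ into Lemma~\ref{Byron 2.2} directly over $R$, so that the conclusion $t + qt' = p_i(1+t') = up_i$ is itself the element whose transcendence is needed; this sidesteps the intermediate ring and the polynomial bookkeeping. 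The handling of the $\C_i$-attached primes (where transcendence machinery cannot apply because $up_i$ is forced into the prime) is exactly the right thing to do and your direct arguments there are correct; the only spot worth spelling out a bit more is the final step of the annihilator argument at $P \in \C_i$, where you need the standard fact that for a prime $P$ and ideals $I_1, I_2$, if $I_1 \nsubseteq P$ and $I_2 \nsubseteq P$ then $I_1 I_2 \nsubseteq P$, hence $I_1 \cap I_2 \nsubseteq P$; your phrase ``neither factor lies in $P$'' is gesturing at this but does not state it. Your observation that the localization step of Lemma~\ref{old 3.2} never invokes the converse of condition (iii), and so applies equally well to semi-JMT data, is accurate and is exactly the sort of footnote the paper glosses over.
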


\begin{proof}
If $\C_i \neq \{ Q^* \}$ for some $Q^* \in \Min (T)$, then the unit $u$ constructed in the proof of Lemma 3.10 of \cite{small09} suffices. The proof in \cite{small09} does not rely on every $Q \in \Ass(T)$ being contained in some $P \in \C$, and the conditions it implies for semi-SIP-subrings are the same as those for semi-JMT subrings, so it holds for minfeasible partitions and semi-JMT subrings as well.
\par If $\C_i = \{ Q^* \}$ for some $Q^* \in \Min (T)$, then properties (ii)-(iv) of Definition \ref{Min-T subring} follow trivially for $\C_i$, and the proof from \cite{small09} implies all other properties of semi-JMT subrings.
\end{proof}

\begin{lem} \label{old 3.8}
Let $(T, M)$ and $\P = ( \C, \{ \C_i \}^m_{i = 1})$ be as in Remark \ref{conditions}. Let $J$ be an ideal of $T$ such that $J \nsubseteq P$ for all $P \in \C$, and let $u + J \in T/J$. Suppose $R$ is a JMT subring. Then there exists a JMT subring $S$ of $T$ such that
\begin{enumerate}[label={(\roman*)}]
    \item $R \subseteq S \subset T$;
    \item if $u \in J$, then $S \cap J$ contains an element $t \notin \bigcup_{Q \in \Min(T)} Q$;
    \item $u + J$ is in the image of the map $S \rightarrow T/J$;
    \item for every finitely generated ideal $I$ of $S$, we have $IT \cap S = I$;
    \item $|R| = |S|$
\end{enumerate}
\end{lem}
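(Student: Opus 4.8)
The plan is to first use Lemma~\ref{old 3.6} to secure conditions (i), (ii), (iii), (v), and then to force condition (iv) by an iterated ``closing off'' of finitely generated ideals based on Lemma~\ref{Byron 2.8}, in the style of Heitmann's completion arguments. First I would apply Lemma~\ref{old 3.6} to the JMT subring $R$, the ideal $J$, and the coset $u+J$, producing a JMT subring $R_0$ with $R\subseteq R_0\subset T$, $|R_0|=|R|$, with $u+J$ in the image of $R_0\to T/J$, and --- when $u\in J$ --- with $R_0\cap J$ containing an element not in $\bigcup_{Q\in\Min(T)}Q$. Since every ring built afterward will contain $R_0$, conditions (ii) and (iii) of the lemma will persist automatically, so it remains only to enlarge $R_0$ to a JMT subring $S$ with $|S|=|R|$ that also satisfies (iv).

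I would next build an ascending tower $R_0\subseteq R_1\subseteq\cdots$ of JMT subrings, each of cardinality $|R|$, and set $S=\bigcup_{n<\omega}R_n$. Given $R_n$, let $\Omega_n$ be the set of pairs $(I,c)$ with $I$ a finitely generated ideal of $R_n$ and $c\in IT\cap R_n$; since $R_n$ is infinite, $|\Omega_n|\le|R_n|=|R|$. Well-order $\Omega_n$ and build an ascending chain of JMT subrings starting from $R_n$: at a successor step, if the current ring is $S_\gamma$ and the next pair is $(I,c)$ with $I=(a_1,\dots,a_k)R_n$, apply Lemma~\ref{Byron 2.8} to $S_\gamma$, the finitely generated ideal $I'=(a_1,\dots,a_k)S_\gamma$, and the element $c$ (observing that $c\in IT\cap R_n=I'T\cap R_n\subseteq I'T\cap S_\gamma$) to get $S_{\gamma+1}\supseteq S_\gamma$ with $c\in I'S_{\gamma+1}$ and $|S_{\gamma+1}|=|S_\gamma|$; at limit steps take unions, which satisfy conditions (i)--(iv) of Definition~\ref{Min-T subring} by the first assertion of Lemma~\ref{old 3.1} and satisfy (v) because the relevant cardinal arithmetic keeps their size at most $|R|<|T|$. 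Let $R_{n+1}$ be the union over the entire enumeration of $\Omega_n$; then $|R_{n+1}|\le|\Omega_n|\cdot|R|=|R|$, so $R_{n+1}$ is again a JMT subring with $|R_{n+1}|=|R|$. The union $S=\bigcup_n R_n$ is then a JMT subring with $|S|=|R|$ (using $\aleph_0\cdot|R|=|R|$), giving (i) and (v); conditions (ii) and (iii) hold because $R_0\cap J\subseteq S\cap J$ and the image of $S\to T/J$ contains that of $R_0\to T/J$.

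It remains to verify (iv). Given a finitely generated ideal $I=(a_1,\dots,a_k)S$ and an element $c\in IT\cap S$, pick $n$ with $a_1,\dots,a_k,c\in R_n$; then $\bigl((a_1,\dots,a_k)R_n,\,c\bigr)\in\Omega_n$, so by construction $c\in(a_1,\dots,a_k)R_{n+1}\subseteq(a_1,\dots,a_k)S=I$. This gives $IT\cap S\subseteq I$, and the reverse inclusion is immediate. The step I expect to be the main obstacle is the bookkeeping across the two nested recursions: one must check that every finitely generated ideal of $S$ together with a witness $c$ is caught at a finite stage of the outer tower (which is why the tower has length $\omega$ and why each witness lies in some $R_n$), and that neither the inner transfinite unions nor the outer countable union ever pushes a cardinality up to $|T|$ (which is why one keeps $|\Omega_n|\le|R|$ and uses that $|R|$ is infinite). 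One should also note that the hypotheses of Remark~\ref{conditions} are inherited unchanged by every ring occurring in the construction, so Lemmas~\ref{old 3.6}, \ref{old 3.1}, and \ref{Byron 2.8} apply at each stage.
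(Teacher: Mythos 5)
Your proposal is correct and follows essentially the same route as the paper: first invoke Lemma~\ref{old 3.6} to secure (i), (ii), (iii), (v), then build an $\omega$-indexed tower $R_0\subseteq R_1\subseteq\cdots$ where each step closes off all pairs $(I,c)$ with $I$ a finitely generated ideal and $c\in IT$ by a transfinite recursion invoking Lemma~\ref{Byron 2.8} at successors and Lemma~\ref{old 3.1} at limits, and finally take $S=\bigcup_n R_n$ and catch any given $(I,c)$ at a finite stage. The bookkeeping details you flag (cardinality control at limits, every witness being caught at a finite level of the outer tower) are exactly the points the paper handles, so the match is essentially exact.
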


\begin{proof}
This proof follows almost exactly the proof of Lemma 3.8 in \cite{small09}. First, we use Lemma \ref{old 3.6} (in this paper) to find a JMT subring $R'$ such that $R \subseteq R'$, $u + J$ is in the image of $R' \rightarrow T/J$, if $u \in J$ then $J \cap R'$ contains an element $t \notin \bigcup_{Q \in \Min(T)} Q$, and $|R'| = |R|$. We will construct a ring $S$ such that $R' \subseteq S \subset T$ and so conditions (i)-(iii) of the lemma hold for $S$. 
\par Let $\Omega = \{ (I, c) \mid I \text{ finitely generated, } c \in IT \cap R' \}$. The cardinality of the set of finitely generated ideals of $R'$ is less than or equal to $|R'|$. Hence, $|\Omega| = |R'| < |T|$. Well-order $\Omega$ so that it has no maximal element, and let 0 denote the minimal element of $\Omega$. For each $\alpha \in \Omega$, we define $\gamma(\alpha) = \sup \{ \beta \in \Omega \mid \beta < \alpha \}$. Let $R_0 = R'$.
\par Let $\lambda \in \Omega$. Assume that $R_\beta$ has been defined for all $\beta < \lambda$, such that $R_\beta$ is a JMT subring, and $|R_\beta| = |R'|$. Suppose $\gamma(\lambda) < \lambda$, and let $(I, c) = \gamma(\lambda)$. Then, using Lemma \ref{Byron 2.8}, we construct $R_\lambda$ such that $R_{\gamma(\lambda)} \subseteq R_\lambda$ and $c \in IR_\lambda$. Note that $|R_\lambda| = |R_{\gamma(\lambda)}| = |R'|$.
\par Next, suppose $\gamma(\lambda) = \lambda$. In this case, we define $R_\lambda = \cup_{\beta < \lambda} R_\beta$. Since $|R_\beta| = |R'|$ for all $\beta < \lambda$, and $|\Omega| = |R'|$, Lemma \ref{old 3.1} implies that $R_\lambda$ is a JMT subring, and $|R_\lambda| = |R'|$.
\par Define
$$
R_1 = \bigcup_{\alpha \in \Omega} R_\alpha.
$$
Then $|R_1| = |R'|$, and $R_1$ is a JMT subring of $T$. If $I$ is a finitely generated ideal of $R_0$, and $c \in IT \cap R_0$, then $(I, c) = \gamma(\alpha)$ for some $\alpha$ such that $\gamma(\alpha) < \alpha$. It follows that $c \in IR_\alpha \subseteq IR_1$. Hence, $IT \cap R_0 \subseteq IR_1$ for every finitely generated ideal $I$ of $R_0$.
\par We repeat this process for $R_1$, and obtain a JMT subring $R_2$ containing $R_1$ such that $IT \cap R_1 \subseteq IR_2$ for each finitely generated ideal $I$ of $R_1$, and $|R_2| = |R'|$. Continuing in this fashion, we construct a chain of JMT subrings $R_0 \subseteq R_1 \subseteq R_2 \subseteq \cdots$ such that $IT \cap R_n \subseteq IR_{n + 1}$ for every finitely generated ideal $I$ of $R_n$ and $|R_n| = |R'|$ for all $n \in \mathbb{N}$. 
\par Let
$$
S = \bigcup_{i = 1}^\infty R_i.
$$
Then, $|S| = |R'| = |R|$, and $S$ is a JMT subring of $T$. Let $I = (s_1, \ldots, s_n)$ be an ideal of $S$, and let $c \in IT \cap S$. Then $c = s_1 t_1 + \cdots + s_n t_n$, where $t_i \in T$, and where each $s_k \in R_{m_k}$ for some $m_k \in \mathbb{N}$. Now, there exists an $m_0 \in \mathbb{N}$ such that $c \in R_{m_0}$. Let $N = \max\{ m_k \mid 0 \le k \le n \}$. Then $c \in (s_1, \ldots, s_n)T \cap R_N \subseteq (s_1, \ldots, s_n)R_{N + 1} \subseteq IS$. Therefore, $IT \cap S = I$. It follows that $S$ is our desired JMT subring of $T$. 
\end{proof}

We now need to construct a base JMT subring of $T$ on which to build up subsequent JMT subrings. We start with a semi-JMT subring. In \cite{small09}, Arnosti et al.~assume $T$ contains the rationals, so the use of $\mathbb{Q}$ suffices for them. In \cite{byron}, Loepp and Perpetua do not assume that $T$ necessarily contains the rationals, so they construct a base subring for every possible case stated in their conditions, as we will now do here.  Parts of the proof of Lemma \ref{Byron 2.12} closely follow the relevant parts of the proof of Lemma 2.12 in \cite{byron} \\

\begin{lem} \label{Byron 2.12}
Let $(T, M)$ and $\P = (\C,\{ \C_i \}_{i = 1}^m)$ be as in Remark \ref{conditions}. Then there exists a semi-JMT subring of T.
\end{lem}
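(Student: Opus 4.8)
The plan is to construct a base semi-JMT subring by starting from the prime subring of $T$ (which is either $\mathbb{Z}$ or $\mathbb{Z}/(p^k)$, depending on the characteristic), adjoining enough transcendental elements to arrange the required intersection properties with the partition, and then localizing at the contraction of $M$. First I would split into cases according to the five possibilities in Remark \ref{conditions}. In cases (i)--(iii), the prime subring (or its localization, or $\mathbb{Q}$) already intersects every $P \in \C$ in the zero ideal — which is forced to lie inside every $Q \in \Min(T)$ — so conditions (ii), (iii) of Definition \ref{Min-T subring} hold vacuously for this starting ring, as does (iv), since $0$ is the only element of $P \cap R$ and $\Ann_T(0) = T \nsubseteq P$; then localize at $M \cap R$ to get quasi-local, and note $|R| \le \aleph_0 < |T|$ since $\dim T \ge 1$. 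In case (iv) (mixed characteristic, $M \cap \mathbb{Z} = (p)$) and case (v) (characteristic $p^k$), the issue is that $p$ may or may not lie in the various $P \in \C$, and we need the intersection with $\C$ to be uniform across each $\C_i$ and to agree with the intersection with the $Q \in \Min(T)$ under it; the hypotheses (iv)(a)--(c) and the $\Ann_T(p)$ condition in (v) are exactly what is needed to make this work, essentially because they say that when $p \in P$ it is "because" $p$ already lies in an associated/minimal prime under $P$.

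The key steps, in order: (1) identify the prime subring $\Pi$ of $T$ and argue $|\Pi| \le \aleph_0$; (2) in the characteristic-zero-injective cases, pass to $\mathbb{Q}$ or to an appropriate localization of $\mathbb{Z}$ so that $p$ does not lie in $M$ at all (case (iii)) or so we work with $\mathbb{Q} \cap T$ — in every such subcase the contraction of each $P \in \C$ to the base ring is $(0)$, which is under every minimal prime, so (ii)--(iv) are immediate; (3) in cases (iv) and (v), take the base ring to be $\Pi$ localized at $M \cap \Pi$, and verify (ii): for $P \in \C$ and $Q \in \Min(T)$ with $Q \subseteq P$, the contraction $P \cap R$ is either $(0)$ or $(p)$ (localized), and by (iv)(a)/(v) it is $(p)$ exactly when $p \in Q$, so $P \cap R = Q \cap R$; verify (iii) using (iv)(b)/(v)(b) that $p \in P \iff p \in P'$ for $P, P' \in \C_i$; verify (iv) using (iv)(c)/(v) that when $p \in P$ and $\C_i \ne \{Q^*\}$, $\Ann_T(p) \nsubseteq P$ — and the elements of $P \cap R$ are all multiples of $p$, whose annihilators contain $\Ann_T(p)$; (4) in all cases, localize at $M \cap R$ to make $R$ quasi-local, observe $R$ is infinite, and check $|R| < |T|$, again using $\dim T \ge 1$ so $|T| \ge \mathfrak{c} > \aleph_0$; (5) note that we do not claim the JMT-converse of (iii) here, only the semi-JMT conditions, which is why $|R| < |T|$ plus (i)--(iv) suffice.

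The main obstacle I expect is the bookkeeping in cases (iv) and (v): one has to be careful that the contraction of each $P \in \C$ to the base ring really is controlled purely by whether $p \in P$, and that this is consistent with the minimal primes sitting underneath — this is where conditions (iv)(a)--(c) (respectively the annihilator condition of (v)) get used, and getting the "if and only if" right (rather than just one direction) is the delicate point. A secondary subtlety is condition (iv) of Definition \ref{Min-T subring}: we must make sure that for $\C_i \ne \{Q^*\}$ and any $r \in P \cap R$ with $P \in \C_i$, $\Ann_T(r) \nsubseteq P$; since every such $r$ is of the form $p \cdot (\text{unit or zero})$ after localization, this reduces to the given hypothesis on $\Ann_T(p)$, but the reduction needs to be stated. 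Everything else — infinitude, quasi-localness after localizing, the cardinality bound — is routine given $\dim T \ge 1$, and the structure closely parallels the proof of Lemma 2.12 in \cite{byron}, with the only genuine additions being the allowance of $Q \in \Ass(T)$ not under any $P \in \C$ (harmless here, since the base ring meets every $P \in \C$ in $(0)$ or $(p)$) and the characteristic $p^k$ case, which is handled just like case (iv) with $\mathbb{Z}/(p^k)$ in place of $\mathbb{Z}_{(p)}$.
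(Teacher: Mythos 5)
The approach matches the paper's (start from the prime subring, case-split on the characteristic, verify conditions (ii)--(iv), localize), but there is a genuine gap in your handling of the positive-characteristic cases. When $\textup{char}(T) = p$ the prime subring is the finite field $\mathbb{Z}/p\mathbb{Z}$, and when $\textup{char}(T) = p^k$ it is the finite ring $\mathbb{Z}/p^k\mathbb{Z}$; localizing at the contraction of $M$ does not change this. So your step (4), which reads ``observe $R$ is infinite,'' simply fails in cases (i) and (v): condition (i) of Definition \ref{Min-T subring} is violated. Your opening paragraph gestures at ``adjoining enough transcendental elements,'' but attributes this to fixing intersection properties (which the prime subring already has) rather than to ensuring infinitude, and the detailed steps (2)--(4) never actually perform the adjunction. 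The paper handles this by explicitly applying Lemma \ref{Byron 2.2} with $J = M$, $t = 0$, $q = 1$ to produce an element $t' \in M$ with $t' + P$ transcendental over $R_0/(P \cap R_0)$ for every $P \in \C$, and then taking $R_0[t']_{(R_0[t'] \cap M)}$, which is countably infinite; Lemma \ref{old 3.3} is what guarantees the intersection conditions survive the adjunction. Without that step, your construction does not produce a semi-JMT subring in those two cases.

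A secondary issue is your treatment of condition (iv) of Definition \ref{Min-T subring} in case (iv) of Remark \ref{conditions}. You write as if hypothesis (iv.c) directly gives $\Ann_T(p) \nsubseteq P$, but (iv.c) only asserts $\Ann_T(p) \nsubseteq Q$ for $Q \in \Ass(T)$ with $Q \subseteq P$. Bridging this to the statement about $P$ is where the paper invests real work: one localizes at $P$, notes that $\Ann_{T_P}(p)$ consists of zerodivisors of $T_P$, invokes prime avoidance to land $\Ann_{T_P}(p)$ inside some $QT_P$ with $Q \in \Ass(T)$, $Q \subseteq P$, pulls back to conclude $\Ann_T(p) \subseteq Q$, and then derives a contradiction from (iv.a) and (iv.c). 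In case (v) the hypothesis really is stated directly about $P$, so no such argument is needed there, but you are conflating the two and the reduction you ``state'' in case (iv) does not go through without the localization argument.

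The cases (ii), (iii) and the overall skeleton (and the reduction of an arbitrary $r \in P \cap R$ to $r = p$ in case (iv)) are fine and agree with the paper.
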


\begin{proof}
\textbf{Characteristic 0:} Suppose the characteristic of $T$ is zero. We prove the case of each condition in Remark \ref{conditions} separately.
\par \textit{Assume only condition (ii) of Remark \ref{conditions} holds}. That is $M \cap \mathbb{Z} = ( 0 )$. Then $M$ contains no integers, so every integer is a unit, hence $T$ contains the rationals. Thus, $R_0 = \mathbb{Q}$ satisfies the conditions of a semi-JMT subring.
\par \textit{Assume only condition (iii) of Remark \ref{conditions} holds.} Then $M \cap \mathbb{Z} = p \mathbb{Z}$ for some prime integer $p$, but $p$ is not in any $P \in \C$. Choose any $P \in \C$. Since $p \notin P$, we have $P \cap \mathbb{Z} \neq p \mathbb{Z}$, and as $P \cap \mathbb{Z} \subseteq M \cap \mathbb{Z}$, we have $P \cap \mathbb{Z} = ( 0 )$. 
Let $R_0 = \mathbb{Z}_{(p \mathbb{Z})}$, so that $R_0$ is a local subring of $T$. Then $P \cap R_0 = ( 0 )$. Furthermore, for any $Q \in \Min(T)$, $Q \subseteq P$ for some $P \in \C$, so $Q \cap R_0 = ( 0 )$. Conditions (ii)-(iv) of semi-JMT subrings follow easily. Then note that $R_0$ is countably infinite and $|T| \geq c$, so $R_0$ is a semi-JMT subring of T.
\par \textit{Assume only condition (iv) of Remark \ref{conditions} holds.} Then $M \cap \mathbb{Z} = p \mathbb{Z}$. Let $R_0 = \mathbb{Z}_{(p \mathbb{Z})}$, so that $R_0$ is an infinite local subring of $T$ with $|R_0| < |T|$. For any prime ideal $P$ of $T$, $P \cap \mathbb{Z}$ is either $p \mathbb{Z}$, if $p \in P$, or $( 0 )$, if $p \notin P$. Conditions (ii) and (iii) of semi-JMT subrings follow respectively from conditions (iv.a) and (iv.b) of Remark \ref{conditions}, so all we have left is to show that condition (iv.c) of Remark \ref{conditions} implies condition (iv) of Definition \ref{Min-T subring}.
\par For the sake of contradiction, suppose that condition (iv) of the definition of semi-JMT subrings does not hold, so that there exists some $P \in \C_i$ with $\C_i \neq \{Q^*\}$ where $Q^* \in \Min (T)$ and $r \in P \cap R_0$ with $\Ann_T(r) \subseteq P$. If $p \notin P$, then $P \cap R_0 = ( 0 )$ and clearly $\Ann_T(0) \nsubseteq P$, so $p \in P$. 
As $P \cap \mathbb{Z} = p \mathbb{Z}$ and $\Ann_T(p) \subseteq \Ann_T(kp)$ for any integer $k$, we may assume that $r = p$ and $\Ann_T(p) \subseteq P$. 
The set of zerodivisors of $T_P$ is equal to $\bigcup \{ QT_P \mid Q \in \Ass(T), Q \subseteq P \}$. Since $\Ann_{T_P}(p)$ consists entirely of zerodivisors, by the Prime Avoidance Theorem we have $\Ann_{T_P}(p) \subseteq QT_P$ for some $Q \in \Ass(T)$ with $Q \subseteq P$. 
Choose any $a \in \Ann_T(p)$, so that $ap = 0$. In $T_P$, we have $\frac{a}{1} \cdot \frac{p}{1} = \frac{0}{1}$. Thus $\frac{a}{1} \in \Ann_{T_P}(p)$, and by the above argument $\frac{a}{1} \in QT_P$. It follows that $a \in Q$, so that $\Ann_T(p) \subseteq Q$. By condition (iv.c) of Remark \ref{conditions}, $p \notin Q$, but by condition (iv.a) of Remark \ref{conditions}, $p \in Q$ because $p \in P$, a contradiction. Therefore, condition (iv) of Definition \ref{Min-T subring} holds, and $R_0$ is a semi-JMT subring of $T$.
\par \textbf{Characteristic $\pmb{p}$/$\pmb{p^k}$:} Assume that the characteristic of $T$ is some prime integer $p$. Let $R_0 = \mathbb{Z}_p$. Since $R_0$ is a field, no prime ideal of $T$ contains a nonzero element of $R_0$. Therefore,
$$
P \cap R_0 = ( 0 ) = P' \cap R_0 = Q \cap R_0
$$
for any $P,P' \in \C$, $Q \in \Min(T)$. Furthermore, it is trivially true that $\Ann_T(r) \nsubseteq P$ for any $P \in \C$ and $r \in P \cap R_0$. 

\par Assume that the characteristic of $T$ is $p^k$, $k \geq 2$ for some prime integer $p$. Let $R_0 = \mathbb{Z}_{p^k}$. Every element of $R_0$ that is relatively prime to $p$ is a unit, and every element of $R_0$ that is in $( p )$ is in every prime ideal of $T$, because $p$ is nilpotent. Therefore,
\[
P \cap R_0 = ( p ) = P' \cap R_0 = Q \cap R_0
\]
for any $P,P' \in \C$, $Q \in \Min(T)$. By assumption, $\Ann_T(p) \nsubseteq P$ for any $P \in \C_i$ with $\C_i \neq \{Q^*\}$ where $Q^* \in \Min (T)$.  If $r \in P \cap R_0$ then $p$ divides $r$, so $\Ann_T(p) \nsubseteq P$ implies that $\Ann_T(r) \nsubseteq P$. 

In either case we obtain a finite subring that satisfies conditions (ii)-(iv) of Definition \ref{Min-T subring}. By adjoining an element to $R_0$, we will make $R_0$ infinite, and thus create a semi-JMT subring:


\par Use Lemma \ref{Byron 2.2} with $J=M$, $t=0$, and $q=1$ to find an element $t' \in M$ such that for every $P \in \C$, $t' + P \in T/P$ is transcendental over $R_0/(P \cap R_0)$. Then $R_0[t']$ satisfies properties (ii)-(iv) of Definition \ref{Min-T subring} by Lemma \ref{old 3.3} and is countably infinite, so it satisfies properties (i) and (v) as well. Then $R_0[t']_{(R_0[t'] \cap M)}$ is the desired semi-JMT subring.
\end{proof}

\begin{thm} \label{Byron 2.13}
Let $(T, M)$ be a complete local ring and $\P = (\C, \{ \C_i \}_{i = 1}^m)$ be a minfeasible partition. If all the conditions of Remark \ref{conditions} hold, then there exists a JMT subring of $T$. On the other hand, if none of the conditions in Remark \ref{conditions} hold, and the following hold:
\begin{enumerate}[label={(\roman*)}]
\item if $\textup{char}(T) = 0$, $M \cap \mathbb{Z} = (p)$, and $T$, $\P$ satisfy conditions (iv.a) and (iv.b) of Remark \ref{conditions}, then for each $Q \in \Ass(T)$ such that $Q \subseteq P$ for some $P \in \C_i$ such that $\C_i \neq \{Q^*\}$ for some $Q^* \in \Min (T)$, if $p \in Q$, then $\textup{Ann}_T(p) \nsubseteq Q$.
\item if $\textup{char}(T) = p^k$, $k \ge 2$ for some prime $p$, then $\textup{Ann}_T(p) \nsubseteq P$ for all $P \in \C_i$ such that $\C_i \neq \{Q^*\}$ for some $Q^* \in \Min (T)$.
\end{enumerate}
then there does not exist a subring of $T$ that satisfies conditions (ii) and (iii) of Definition \ref{Min-T subring}.
\end{thm}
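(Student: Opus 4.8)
\emph{The existence half.} The statement splits into an existence half and a non-existence half, which I would handle separately. Under the hypotheses of Remark \ref{conditions}, Lemma \ref{Byron 2.12} already supplies a semi-JMT subring $R_0$ of $T$, so the only thing left to arrange is the converse of condition (iii): a semi-JMT subring $R$ is in fact a JMT subring as soon as the intersections $\C_1 \cap R, \dots, \C_m \cap R$ are pairwise distinct, because if $P \in \C_i$, $P' \in \C_j$, and $P \cap R = P' \cap R$, then $\C_i \cap R = \C_j \cap R$ by condition (iii) of Definition \ref{Min-T subring}, forcing $i = j$. To separate the blocks I would, for each $i$, produce an element $p_i \in T$ meeting the hypotheses of Lemma \ref{old 3.10}: $p_i$ in every $Q \in \Min(T)$ under $\C_i$, $p_i$ in no $P \in \C_j$ with $j \neq i$, and, unless $\C_i$ is a single minimal prime, $\Ann_T(p_i) \nsubseteq P$ for every $P \in \C_i$. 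Applying Lemma \ref{old 3.10} once for each $i = 1, \dots, m$ yields a chain $R_0 \subseteq R_1 \subseteq \dots \subseteq R_m =: R$ of semi-JMT subrings with a unit multiple $u_i p_i \in R_i$. Since $u_i p_i$ lies in every $P \in \C_i$ but, $u_i$ being a unit and $p_i$ avoiding those primes, in no $P \in \C_j$ for $j \neq i$, the intersections $\C_i \cap R$ are pairwise distinct, so $R$ is the JMT subring we want.

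\emph{Constructing the $p_i$.} This is the one genuinely delicate step, and it is where the possible non-reducedness of $T$ matters. I would fix a minimal primary decomposition $(0) = \bigcap_k \mathfrak{q}_k$ of $T$ and set $\mathfrak{b}_i = \bigcap\{\mathfrak{q}_k : \sqrt{\mathfrak{q}_k} \text{ is under } \C_i\}$. Any $p_i \in \mathfrak{b}_i$ lies in every minimal prime under $\C_i$ (each such prime is $\sqrt{\mathfrak{q}_k}$ for a component appearing in $\mathfrak{b}_i$, and $\mathfrak{q}_k \subseteq \sqrt{\mathfrak{q}_k}$), and, since for $P \in \C_i$ the components $\mathfrak{q}_k$ with $\sqrt{\mathfrak{q}_k} \subseteq P$ all appear among those defining $\mathfrak{b}_i$, one has $\mathfrak{b}_i \subseteq \bigcap\{\mathfrak{q}_k : \sqrt{\mathfrak{q}_k} \subseteq P\} = \ker(T \to T_P)$, hence $\Ann_T(p_i) \nsubseteq P$. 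The key point is that $\mathfrak{b}_i \nsubseteq P$ for every $P \in \C \setminus \C_i$: otherwise, $P$ being prime and there being finitely many components, some $\mathfrak{q}_k \subseteq P$ with $\sqrt{\mathfrak{q}_k}$ under $\C_i$, whence $\sqrt{\mathfrak{q}_k} \subseteq P$; but a prime of $\Ass(T)$ lying under some block lies under exactly one of them --- otherwise the minimal prime beneath it would lie under two blocks, contradicting condition (ii) of Definition \ref{minfeasible} --- so $P \in \C_j$ would put $\sqrt{\mathfrak{q}_k}$ under $\C_j$ and force $j = i$, a contradiction. Lemma \ref{old 3.4}, applied to $\mathfrak{b}_i$ and the countable incomparable family $\C \setminus \C_i$, then produces $p_i \in \mathfrak{b}_i$ lying outside every $P \in \C \setminus \C_i$. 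When $\C_i = \{Q^*\}$ with $Q^* \in \Min(T)$, $\mathfrak{b}_i$ is simply the $Q^*$-primary component, the annihilator requirement of Lemma \ref{old 3.10} is vacuous, and the same recipe applies. I expect this construction to be the main obstacle; the remainder of the existence half is just an assembly of Lemmas \ref{Byron 2.12}, \ref{old 3.10}, and \ref{old 3.4}.

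\emph{The non-existence half.} Assume that none of (i)--(v) of Remark \ref{conditions} holds, that the two extra hypotheses of the theorem hold, and, for contradiction, that $R \subseteq T$ is a subring satisfying conditions (ii) and (iii) of Definition \ref{Min-T subring}. The characteristic of a local ring is $0$ or a prime power, so failure of Remark's (i) leaves $\textup{char}(T) = 0$ or $\textup{char}(T) = p^k$ with $k \geq 2$; in the latter case the extra hypothesis (ii) is precisely the annihilator condition occurring in Remark's (v), so (v) would hold, contrary to assumption, and therefore $\textup{char}(T) = 0$. Failure of Remark's (ii) then gives $M \cap \mathbb{Z} = (p)$ for a prime $p$, and since $R$ is a unital subring of characteristic zero, $\mathbb{Z} \subseteq R$, so $p \in R$. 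Failure of Remark's (iv), together with the extra hypothesis (i) (which asserts (iv.a) and (iv.b) imply (iv.c)), forces (iv.a) or (iv.b) to fail. If (iv.a) fails, there are $P \in \C$ and $Q \in \Ass(T)$ with $Q \subseteq P$, $p \in P$, and $p \notin Q$; choosing $Q' \in \Min(T)$ with $Q' \subseteq Q$ gives $Q' \subseteq P$ and $p \notin Q'$, so $p \in P \cap R$ while $p \notin Q' \cap R$, contradicting condition (ii) of Definition \ref{Min-T subring}. If (iv.b) fails, there are $P, P' \in \C_i$ with $p \in P$ and $p \notin P'$, so $p \in P \cap R$ while $p \notin P' \cap R$, contradicting condition (iii) of Definition \ref{Min-T subring}. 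In either case we obtain a contradiction, so no such subring exists.
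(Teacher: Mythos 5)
Your proof is correct, and the non-existence half is essentially the paper's argument; in fact you are slightly more careful than the paper, which applies condition~(ii) of Definition~\ref{Min-T subring} directly to a $Q \in \Ass(T)$ even though that condition only speaks of $Q \in \Min(T)$ --- your step of passing to a minimal prime $Q' \subseteq Q$ and noting $p \notin Q'$ is exactly the small repair the paper's wording needs. Where you genuinely diverge from the paper is the existence half. The paper disposes of it in one sentence, citing Lemma~3.11 of \cite{small09} and saying to replace $\mathbb{Q}$ by the base ring from Lemma~\ref{Byron 2.12}; you instead spell out the block-separation argument and, crucially, give an explicit construction of the elements $p_i$ needed by Lemma~\ref{old 3.10}, via $\mathfrak{b}_i = \bigcap\{\mathfrak{q}_k : \sqrt{\mathfrak{q}_k} \text{ under } \C_i\}$ together with Lemma~\ref{old 3.4}. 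This is worth doing: the paper's appeal to \cite{small09} glosses over the fact that the present setting drops the hypothesis $(\ast)$ (that every associated prime lies under some $P \in \C$), so the availability of the $p_i$ is no longer automatic, and your construction actually supplies them in the non-reduced, $(\ast)$-free case. Your verification that $\mathfrak{b}_i \subseteq \ker(T \to T_P)$ for $P \in \C_i$ (giving the annihilator condition), that $\mathfrak{b}_i \nsubseteq P$ for $P \in \C \setminus \C_i$ (via condition~(ii) of Definition~\ref{minfeasible}), and that irredundancy of the minimal primary decomposition keeps $\mathfrak{b}_i$ nonzero when it needs to be, are all sound. Net effect: same result, but your argument is self-contained where the paper relies on an external lemma that doesn't obviously transfer without comment.
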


\begin{proof}
Since the conditions of Remark \ref{conditions} hold, there exists a semi-JMT subring of $T$ by Lemma \ref{Byron 2.12}. We construct a JMT subring of $T$ in the exact same way as in the proof of Lemma 3.11 in \cite{small09}, replacing $\mathbb{Q}$ in that proof with the semi-JMT subring of Lemma \ref{Byron 2.12}.
\par If the characteristic of $T$ is not zero, then we will show that the characteristic must be $p^k$ for some prime $p$. Suppose otherwise. Then $T$ has nonzero characteristic $n$ where $n = ab$ for some relatively prime integers $1 < a$, $b < n$. Then $a$ and $b$ are zerodivisors, and consequently nonunits, so $a, b \in M$. Then $( a,b ) \subseteq M$. By Bezout's identity, there exist integers $r$ and $s$ such that $ra + sb = 1$, implying $1 \in M$, a contradiction. Thus $\textup{char}(T) = 0$ or $\textup{char}(T) = p^k$ for some prime $p$.
\par We now show that, assuming conditions (i) and (ii) in the statement of the theorem hold, at least one of the conditions of Remark \ref{conditions} necessarily holds for there to exist a subring of $T$ satisfying conditions (ii) and (iii) of Definition \ref{Min-T subring}. We will assume that such a subring exists when conditions (i)-(iii) and (v) of Remark \ref{conditions} fail and show that condition (iv) of Remark \ref{conditions} must hold. Note that if (i)-(iii) and (v) of Remark \ref{conditions} fail, then $\textup{char}(T) = 0$, $M \cap \mathbb{Z} = ( p )$ for some prime integer $p$, and there exists $P \in \C$ such that $M \cap \mathbb{Z} \subseteq P$. So we must be in case (iv). Note that if (iv.a) and (iv.b) hold, then (iv.c) holds by the assumption in the statement of the theorem. We will now show that (iv.a) and (iv.b) hold.
\par Suppose that there exists a subring of $T$ satisfying conditions (ii) and (iii) of Definition \ref{Min-T subring} when conditions (i)-(iii), (v) and (iv.a) of Remark \ref{conditions} fail. Then $\textup{char}(T) = 0$, $M \cap \mathbb{Z} = p \mathbb{Z}$ for some prime $p$, and there exists some $P \in \C$ and some $Q \in \Ass(T)$ contained in $P$ such that $p \in P$ but $p \notin Q$. By condition (ii) of Definition \ref{Min-T subring}, $P \cap R = Q \cap R$, so $p \notin R$. But this is impossible, as any subring of $T$ must contain the integers. 
\par Now suppose that there exists a subring of $T$ satisfying conditions (ii) and (iii) of Definition \ref{Min-T subring} when conditions (i)-(iii), (v) and (iv.b) of Remark \ref{conditions} fail. Then there exists an $i$ such that $P \cap \mathbb{Z} \neq P' \cap \mathbb{Z}$ for some $P,P' \in C_i$. As any subring of $T$ contains the integers, $P \cap R \neq P' \cap R$, contradicting condition (iii) of Definition \ref{Min-T subring}.
\end{proof}

We now prove our main theorem of this section, a generalization of Theorem 2.14 from \cite{byron}. Note that, although the conditions on the complete local ring $T$ are complicated, essentially the theorem says that for most complete local rings $T$ (particularly for all ``nice" cases) we can construct a precompletion $A$ of $T$ such that we control the formal fibers of the minimal prime ideals of $A$ very precisely. In addition, this construction gives precise control of the prime ideals of $A$ of positive height that contain particular minimal prime ideals of $A$. In particular, this construction achieves the upper bound on the set $\S_A(X)$ of Theorem \ref{success} so that the surjection from $\S_T(X)$ onto $\S_A(X)$ is a bijection. \\

\begin{thm} \label{main theorem}
Let $(T, M)$ be a complete local ring of dimension at least one and $\P = (\C, \{ \C_{i = 1}^m \})$ be a minfeasible partition and assume that:
\begin{enumerate}[label={(\roman*)}]
\item If $\textup{char}(T) = 0$, $M \cap \mathbb{Z} = (p)$, and $T, \P$ satisfy conditions (iv.a) and (iv.b) of Remark \ref{conditions}, then for each $Q \in \Ass(T)$ such that $Q \subseteq P$ for some $P \in \C_i$ such that $\C_i \neq \{Q^*\}$ for some $Q^* \in \Min(T)$, if $p \in Q$, then $\textup{Ann}_T(p) \not \subseteq Q$.
\item If $\textup{char}(T) = p^k$, $k \ge 2$ for some prime $p$, then $\textup{Ann}_T(p) \nsubseteq P$ for all $P \in \C_i$ such that $\C_i \neq \{Q^*\}$ for some $Q^* \in \Min (T)$.
\end{enumerate}
Then $T$ is the completion of a local subring $A$ such that:
\begin{enumerate}[label={(\roman*)}]
    \item $\Min(A) = \{ \C_1 \cap A, \ldots, \C_m \cap A \}$, and the formal fiber of $\C_i \cap A$ has countably many maximal elements, which are precisely the elements of $\C_i.$
    \item If $J$ is an ideal of $T$ such that $J \subsetneq P$ for every $P \in \C$, then the natural map $A \rightarrow T/J$ is onto so that there is a bijection between $\{ J \in \Spec(T) \mid J \not \subseteq P \text{ for every } P \in \C \}$ and $\{ j \in \Spec(A) \mid j \not \subseteq q \text{ for every } q \in \Min(A) \}$.
    \item Using the notation of Theorem \ref{success}, for all $X \subseteq \{ 1, \ldots, m \}$ we have a bijection between $\mathcal{S}_T(X)$ and $\mathcal{S}_A(X)$.
\end{enumerate}
if and only if one of the following is true:
\begin{enumerate}[label={(\roman*)}]
    \item $\textup{char}(T) = p$ for some prime $p$.
    \item $\textup{char}(T) = 0$ and $M \cap \mathbb{Z} = (0)$.
    \item $\textup{char}(T) = 0$ and for all $P \in \C$, $M \cap \mathbb{Z} \not \subseteq P$.
    \item $\textup{char}(T) = 0$, $M \cap \mathbb{Z} = (p)$ for some prime p, and:
    \begin{enumerate}
        \item For each $P \in \C$ and each $Q \in \Ass(T)$ with $Q \subseteq P$, $p \in Q$ whenever $p \in P$.
        \item For each subcollection $\C_i$ and for any $P,P' \in \C_i$, $p \in P$ if and only if $p \in P'$.
        \item For each $Q \in \Ass(T)$ such that $Q \subseteq P$ for some $P \in \C_i$ such that $\C_i \neq \{Q^*\}$ for some $Q^* \in \Min(T)$, if $p \in Q$, then $\textup{Ann}_T(p) \not \subseteq Q$.
\end{enumerate}
    \item $\textup{char}(T) = p^k$, $k \ge 2$ for some prime $p$ and $\textup{Ann}_T(p) \nsubseteq P$ for all $P \in \C_i$ such that $\C_i \neq \{Q^*\}$ for some $Q^* \in \Min (T)$.
\end{enumerate}
\end{thm}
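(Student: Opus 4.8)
The plan is to prove the two implications separately, with essentially all of the work in the ``if'' direction, which will be a transfinite construction in the spirit of Theorem~2.14 of \cite{byron}, assembled from Lemmas~\ref{old 3.6}, \ref{Byron 2.8}, \ref{old 3.8}, \ref{Byron 2.12}, \ref{old 3.1} and Proposition~\ref{cpm}. For the ``only if'' direction, suppose a local subring $A \subseteq T$ with $\widehat{A}=T$ satisfies the three conclusions; in particular $\Min(A) = \{\C_1 \cap A,\dots,\C_m\cap A\}$ and, for each $i$, the maximal elements of the formal fiber of $\C_i \cap A$ are exactly the elements of $\C_i$. First I would note that $A$ then satisfies conditions (ii) and (iii) of Definition~\ref{Min-T subring}: each $P \in \C_i$ lies in the formal fiber of $\C_i \cap A$, so $P \cap A = \C_i \cap A$, giving (iii); and if $P \in \C_i$ and $Q \in \Min(T)$ with $Q \subseteq P$, then $Q \cap A \subseteq P \cap A = \C_i \cap A \in \Min(A)$ forces $Q \cap A = \C_i \cap A = P \cap A$, giving (ii). Since hypotheses (i) and (ii) of the present theorem are precisely hypotheses (i) and (ii) of Theorem~\ref{Byron 2.13}, the contrapositive of the last assertion of that theorem yields that one of conditions (i)--(v) of Remark~\ref{conditions} must hold.

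For the ``if'' direction, assume one of conditions (i)--(v) holds, so $(T,M)$ and $\P$ are as in Remark~\ref{conditions}; by Theorem~\ref{Byron 2.13} there is a JMT subring $R_0$ of $T$, and since a complete local ring of dimension at least one is uncountable we have $|R_0| < |T|$. I would then well-order a set $\Omega$ of ``tasks'' of three kinds: (a) the cosets of $T/M^2$; (b) for each ideal $J$ of $T$ with $J \not\subseteq P$ for all $P \in \C$, all cosets $t + J \in T/J$ (including $0+J$); and (c) pairs $(I,c)$ with $I$ a finitely generated ideal of the ring built so far and $c \in IT \cap (\text{that ring})$. One can arrange $|\Omega| \le |T|$ with every proper initial segment of strictly smaller cardinality. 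Building by transfinite recursion from $R_0$---handling type-(a) and type-(b) tasks with Lemma~\ref{old 3.8}, type-(c) tasks with Lemma~\ref{Byron 2.8}, and taking unions at limit stages via Lemma~\ref{old 3.1}, which is legitimate since each intermediate ring remains a JMT subring of cardinality $< |T|$---let $A$ be the final union. Then $A$ is a Min-T subring (possibly a JMT subring), so conditions (i)--(iv) of Definition~\ref{Min-T subring} hold; by construction $A \to T/M^2$ is onto and $IT \cap A = I$ for every finitely generated ideal $I$ of $A$, so Proposition~\ref{cpm} gives $\widehat{A}=T$. A crucial extra feature, supplied by part (ii) of Lemma~\ref{old 3.8} applied to the task $(J,0+J)$, is that for every $J$ as in (b) the ring $A$ contains an element of $J$ lying outside $\bigcup_{Q \in \Min(T)} Q$.

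It then remains to extract the three conclusions. For conclusion (ii): for $J$ with $J \not\subseteq P$ for all $P \in \C$, the map $A \to T/J$ is onto, so $A/(J\cap A) \cong T/J$ is complete and hence equals its own completion $T/(J\cap A)T$, forcing $J = (J\cap A)T$; together with surjectivity of $\Spec(T)\to\Spec(A)$ (Lemma~\ref{General Poset Property}) this gives the asserted bijection $J \mapsto J\cap A$, where the ``element outside $\bigcup\Min(T)$'' shows $J\cap A \not\subseteq \C_i\cap A$ for all $i$, and conversely that a prime of $T$ lying over a prime $j$ of $A$ with $j\not\subseteq q$ for all $q\in\Min(A)$ cannot be under any element of $\C$. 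For conclusion (i): the same device shows the formal fiber of $\C_i\cap A$ contains only primes of $T$ under some $P\in\C_i$---a prime $\widetilde Q$ in the fiber not under any element of $\C$ would make $\widetilde Q\cap A = \C_i\cap A$ contain an element outside $\bigcup\Min(T)$, contradicting $\C_i\cap A \subseteq Q$ for any minimal $Q$ under $\C_i$---and its maximal elements are then exactly the countably many members of $\C_i$. Moreover $\Min(A)=\{\C_1\cap A,\dots,\C_m\cap A\}$ since every minimal prime of $A$ is $Q\cap A = \C_k\cap A$ for the unique $k$ with $Q\in\Min(T)$ under $\C_k$, while the $\C_i\cap A$ are pairwise incomparable: if $\C_j\cap A \subseteq \C_i\cap A$ then by Lemma~\ref{General Poset Property} some prime of $T$ in the fiber of $\C_j\cap A$ sits below a member of $\C_i$ and hence contains a minimal prime of $T$ under both $\C_i$ and $\C_j$, violating the minfeasible condition. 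Finally, for conclusion (iii): by Theorem~\ref{success} the map $P\mapsto P\cap A$ surjects $\S_T(X)$ onto $\S_A(X)$, and for nonempty $X$ every $P\in\S_T(X)$ fails to be under any element of $\C$, so the injectivity established in conclusion (ii) upgrades this surjection to a bijection.

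The main obstacle is the bookkeeping of the transfinite recursion: one must interleave the three families of tasks so that in the limit $A$ simultaneously maps onto $T/M^2$, satisfies $IT\cap A = I$ for all finitely generated $I$, maps onto $T/J$ for every admissible $J$, and picks up enough elements outside $\bigcup\Min(T)$, all while ensuring every intermediate ring remains a JMT subring so that Lemmas~\ref{old 3.8}, \ref{Byron 2.8}, and \ref{old 3.1} keep applying; this is where one must also confirm cardinalities stay below $|T|$ until the final union. The genuinely new ingredient beyond \cite{byron}, now that $A$ need not be reduced, is the verification that $\Min(A)$ is exactly $\{\C_1\cap A,\dots,\C_m\cap A\}$---that is, the incomparability argument above, which is precisely where the defining property of a minfeasible partition (each minimal prime of $T$ lies under exactly one $\C_k$) is used.
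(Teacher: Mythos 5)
Your proposal is correct and follows essentially the same route as the paper: both directions are reduced to Theorem \ref{Byron 2.13}, and the construction in the ``if'' direction is the same transfinite recursion over cosets $u+J$ (with $J\not\subseteq P$ for all $P\in\C$) driven by Lemma \ref{old 3.8}, unioned via Lemma \ref{old 3.1}, with Proposition \ref{cpm} delivering $\widehat A=T$ and the ``element of $J$ outside $\bigcup\Min(T)$'' device controlling the formal fibers. The only cosmetic difference is that you list type-(c) tasks $(I,c)$ separately, whereas in the paper these are absorbed into Lemma \ref{old 3.8} (each $R_\lambda$ built there already satisfies $IT\cap R_\lambda=I$), and the $T/M^2$ tasks are already subsumed in the $T/J$ tasks since $M^2\not\subseteq P$ for non-maximal $P$.
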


\begin{proof}
In Theorem \ref{Byron 2.13}, we proved that given the first two assumptions, these conditions are necessary for there to exist a subring that satisfies conditions (ii) and (iii) of Definition \ref{Min-T subring}. Such a subring is necessary to control the intersection of the prime ideals under elements in the minfeasible partition, and thus necessary for us to achieve control over the minimal prime ideals of the precompletion and their formal fibers as desired.
\par The proof of existence of such a subring is almost exactly the same as the proof of Theorem 2.14 in \cite{small09}. Let $\Omega = \{ u + J \mid u \in T, J \nsubseteq P \text{ for all } P \in \C \}$ equipped with a well-ordering $<$, such that every element has strictly fewer than $|\Omega|$ predecessors. Note that
\[
|\{ J \mid J \text{ is an ideal of } T \text{ with } J \not \subseteq P \text{ for every } P \in \C \}| \leq |T|.
\]
For each $\alpha \in \Omega$, we let $|\alpha| = |\{ \beta \in \Omega \mid \beta \leq \alpha \}|$. Let 0 denote the first element of $\Omega$, and let $R_0$ be the JMT subring constructed in Lemma \ref{Byron 2.13}. For each $\lambda \in \Omega$ after the first, we define $R_{\lambda}$ recursively as follows: assume $R_{\beta}$ is defined for all $\beta < \lambda$ such that $R_{\beta}$ is a JMT subring, and $|R_{\beta}| \leq |\beta||R_0|$ for all $\beta < \alpha$. Let $\gamma(\lambda) = u + J$ denote the least upper bound of the set of predecessors of $\lambda$. If $\gamma(\lambda) < \lambda$, we use Lemma \ref{old 3.8} with $R = R_{\gamma(\lambda)}$ to find a JMT subring $R_{\lambda}$ such that
\begin{enumerate}[label={(\roman*)}]
    \item $R_{\gamma(\lambda)} \subseteq R_{\lambda} \subseteq T$;
    \item if $u \in J$, then $R_{\lambda} \cap J$ contains an element $t \notin \bigcup_{Q \in \Min(T)} Q$;
    \item the coset $\gamma(\lambda) = u+J$ is in the image of the map $R_{\lambda} \rightarrow T/J$;
    \item for every finitely generated ideal $I$ of $R_{\lambda}$, we have $IT \cap R_{\lambda} = I$;
\end{enumerate}
In this case,
\[
|R_{\lambda}| = |R_{\gamma(\lambda)}| \leq |\gamma(\lambda)||R_0| \leq |\lambda||R_0|
\]
On the other hand, if $\gamma(\lambda) = \lambda$, we let
\[
R_{\lambda} = \bigcup_{\beta < \lambda} R_\beta.
\]
Then $|\lambda| < |\Omega| \leq |T|$, and $|R_{\lambda}| \leq |\lambda||R_0|$. By Lemma \ref{old 3.1}, $R_{\lambda}$ is a JMT subring of $T$.
\par Let
\[
A = \bigcup_{\alpha \in \Omega} R_\alpha.
\]
Then $(A, A \cap M)$ is a Min-T subring of $T$.
\par Note that $M^2 \not \subseteq P$ for every $P \in \C$, so the map $A \to T / M^2$ is onto. Next, let $I = (a_1, \ldots, a_n)A$ be a finitely generated ideal of $A$ and $c \in IT \cap A$. Then, for some $\delta \in \Omega$ such that $\delta$ has a predecessor, we have $\{ c, a_1, \ldots, a_n \} \subseteq R_{\delta}$. In particular, this yields $c \in (a_1,a_2, \ldots ,a_n)R_{\delta} \subseteq I$. Hence, $IT \cap A = I$ for all finitely generated ideals $I$ of $A$. Thus $A$ is Noetherian and $\widehat{A} = T$.
\par Since $T$ is faithfully flat over $A$, the ideals $\C_i \cap A$ are the minimal prime ideals of $A$. By construction, for every ideal $J$ not contained in any $P \in \C$, $A$ contains an element of $J$ not in any minimal prime ideal of $T$.  Suppose for such an ideal $J$ that $J \cap A$ is a minimal prime ideal of $A$.  Then $J \cap A = Q \cap A$ for some minimal prime ideal $Q$ of $T$.  This contradicts that $A$ contains an element of $J$ not in any minimal prime ideal of $T$.  It follows that the formal fiber of $\C_i \cap A$ has countably many maximal elements, which are precisely the elements of $\C_i$. Furthermore, by construction, the map $A \to T / J$ is onto for any ideal $J$ such that $J \not \subseteq P$ for all $P \in \C$.
\par Now suppose $p \in \Spec(A)$ with $\text{ht}(p) > 0$. Then $pT \not \subseteq P$ for every $P \in \C$, and so the map $A \to T / pT$ is onto. Since $pT \cap A = p$, $A/p \cong T / pT$. It follows that the only element in the formal fiber of $p$ is $pT$. Thus we have a bijection between $\{J \in \Spec(T) \mid J \not \subseteq P \text{ for every } P \in \C \}$ and $\{j \in \Spec(A) \mid j \not \subseteq q \text{ for every } q \in \Min(A) \}$. Furthermore, by Theorem \ref{success} this bijection implies, for all $X \subseteq \{ 1, \ldots, m \}$, a bijection between $\mathcal{S}_T(X)$ and $\mathcal{S}_A(X)$.
\end{proof}
\hfill \newline

\begin{rmk}
Although the conditions of the previous theorem are complicated, they imply the following:
Let $(T, M)$ be a complete local ring of dimension at least one and let $\P = (\C, \{ \C_{i = 1}^m \})$ be a minfeasible partition and assume that it is not the case that one of the following conditions hold:
\begin{enumerate}[label={(\roman*)}]
\item $\textup{char}(T) = 0$, $M \cap \mathbb{Z} = (p)$, and one of the following is true:
\begin{enumerate}
        \item There exist prime ideals $P,P'$ contained under the same $\C_i$ such that $p \in P$ but $p \notin P'$.
        \item For some prime ideal $Q$ under some $\C_i$ (where $\C_i \neq \{Q\}$), $p \in Q$ and $\textup{Ann}_T(p) \subseteq Q$.
\end{enumerate}
\item $\textup{char}(T) = p^k$, $ k \geq 2$ for some prime $p$ and $\textup{Ann}_T(p) \subseteq Q$ for some $Q$ under $\C_i$ (where $\C_i \neq \{Q\}$).
\end{enumerate}
Then $T$ is the completion of a local ring $A$ such that \begin{enumerate}[label={(\roman*)}]
    \item $\Min(A) = \{ \C_1 \cap A, \ldots, \C_m \cap A \}$, and the formal fiber of $\C_i \cap A$ has countably many maximal elements, which are precisely the elements of $\C_i$.
    \item If $J$ is an ideal of $T$ such that $J \subsetneq P$ for every $P \in \C$, then the natural map $A \rightarrow T/J$ is onto so that there is a bijection between $\{ J \in \Spec(T) \mid J \not \subseteq P \text{ for every } P \in \C \}$ and $\{ j \in \Spec(A) \mid j \not \subseteq q \text{ for every } q \in \Min(A) \}$.
    \item Using the notation of Theorem \ref{success}, for all $X \subseteq \{ 1, \ldots, m \}$ we have a bijection between $\mathcal{S}_T(X)$ and $\mathcal{S}_A(X)$.
\end{enumerate}
\end{rmk}

Note that the bijection between $\S_T(X)$ and $\S_A(X)$ implies a similar bijection between $\S_T^1(X)$ and $\S_A^1(X)$. Under certain conditions, we can construct a countable precompletion (with countable spectrum) with some of the same control of formal fibers of the minimal prime ideals. We can not preserve the bijection between $\mathcal{S}_T(X)$ and $\mathcal{S}_A(X)$, but we can preserve the bijection between $\S_T^1(X)$ and $\S_A^1(X)$. To do so, we need to do some extra work in constructing the base ring. 
\par First, we have a preliminary lemma about residue field powers. Though the result is well-known, we include a proof for completeness. \\

\begin{lem} \label{residue field powers}
    Let $(A,M)$ be a local ring. If $A/M$ is finite, then $A/M^n$ is finite for all $n$. If $A/M$ is infinite, then $|A/M^n| = |A/M|$ for all $n$.
\end{lem}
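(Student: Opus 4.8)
The plan is to induct on $n$, using the short exact sequences of $A/M$-modules
\[
0 \longrightarrow M^{n}/M^{n+1} \longrightarrow A/M^{n+1} \longrightarrow A/M^{n} \longrightarrow 0
\]
arising from the filtration $A \supseteq M \supseteq M^{2} \supseteq \cdots$. The base case $n = 1$ is trivial. For the inductive step, the key observation is that since $A$ is Noetherian, $M^{n}$ is a finitely generated ideal, so $M^{n}/M^{n+1}$ is a finitely generated module over the field $A/M$, i.e.\ a finite-dimensional $A/M$-vector space. Hence $|M^{n}/M^{n+1}| = |A/M|^{d_n}$ for some integer $d_n \ge 0$ (this being $1$ when $d_n = 0$, as happens when $M^{n} = M^{n+1}$, which by Nakayama forces $M^{n} = 0$).

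Now I would split into the two cases. If $A/M$ is finite, then $|M^{n}/M^{n+1}| = |A/M|^{d_n}$ is finite, and $A/M^{n}$ is finite by the inductive hypothesis; since the middle term of the exact sequence is a union of $|A/M^{n}|$ cosets of $M^{n}/M^{n+1}$, we get $|A/M^{n+1}| = |M^{n}/M^{n+1}|\cdot|A/M^{n}| < \infty$. If $A/M$ is infinite, then by the inductive hypothesis $|A/M^{n}| = |A/M|$, while $|M^{n}/M^{n+1}| = |A/M|^{d_n} \le |A/M|$ (a finite positive power of an infinite cardinal equals that cardinal, and the $d_n = 0$ case gives $1 \le |A/M|$). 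Therefore
\[
|A/M^{n+1}| = |M^{n}/M^{n+1}|\cdot|A/M^{n}| \le |A/M|\cdot|A/M| = |A/M|,
\]
and the reverse inequality is clear because $A/M$ is a quotient of $A/M^{n+1}$; hence $|A/M^{n+1}| = |A/M|$.

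There is no genuine obstacle here; the only points needing a moment's care are the appeal to Noetherianness of $A$ to know each graded piece $M^{n}/M^{n+1}$ is a finite-dimensional $A/M$-vector space, and the elementary cardinal arithmetic $\kappa\cdot\kappa = \kappa$ and $\kappa^{d} = \kappa$ for $\kappa$ infinite and $d \ge 1$ a finite integer, which is exactly what collapses the product of cardinalities in the exact sequence back down to $|A/M|$.
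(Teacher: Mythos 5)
Your proof is correct and takes essentially the same approach as the paper: both arguments rest on the filtration $A \supseteq M \supseteq M^2 \supseteq \cdots$ and the fact that each graded piece $M^{i-1}/M^i$ is a finite-dimensional $A/M$-vector space (by Noetherianness), followed by the same cardinal arithmetic. The only cosmetic difference is that you phrase it as an induction via short exact sequences, whereas the paper writes the resulting cardinality as a single product $|A/M^n| = |A/M|\cdot|M/M^2|\cdots|M^{n-1}/M^n|$.
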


\begin{proof}
    We have that
    \[
    |A/M^n| = |A/M|\cdot|M/M^2| \cdots |M^{n-1}/M^n| = |A/M|\cdot |A/M|^{a_2} \cdots |A/M|^{a_n}
    \]
    Where $a_i$ is the dimension of $M^{i-1}/M^i$ as a vector space over $A/M$. Since $A$ is Noetherian, each $a_i$ is finite and so the result follows.
\end{proof}

The following lemma ensures that the intersection of distinct prime ideals in $\S_T^1(X)$ with our countable JMT subring remain distinct.

\begin{lem}\label{countable set-up}
Let $(T, M)$ and $\P = (\C, \{ \C_i \}_{i = 1}^m)$ be as in Remark \ref{conditions}. Then there exists a countable JMT subring $\overline{R}$ of $T$ such that for all $X \subseteq \{ 1, \ldots, m \}$ with $|X| \ge 2$ 
and for any $P, P' \in \mathcal{S}_T^1(X)$, $P \cap \overline{R}$ and $P' \cap \overline{R}$ are incomparable.
\end{lem}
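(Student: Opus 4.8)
The plan is to take for $\overline{R}$ a countable JMT subring that has been enlarged finitely many times, each time by a single transcendental element chosen to separate one pair of primes. Since each $\S_T^1(X)$ is finite and there are only finitely many $X \subseteq \{1,\dots,m\}$, there are only finitely many ordered pairs $(P,P')$ of distinct primes lying in a common $\S_T^1(X)$, and for each such pair $P \not\subseteq P'$ because distinct minimal elements of $\overline{\S}_T^1(X)$ are incomparable. A countable base JMT subring $R_0$ exists: Lemma \ref{Byron 2.12} gives a countable semi-JMT subring, and the construction of Theorem \ref{Byron 2.13} (following Lemma 3.11 of \cite{small09}) upgrades it to a JMT subring by adjoining only countably many transcendental elements, so $R_0$ may be taken countable; in particular $|R_0| = \aleph_0 < \mathfrak{c} \le |T|$.

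The crux is the following claim: if $R$ is a countable JMT subring and $P,P'$ are distinct elements of some $\S_T^1(X)$, then there is a JMT subring $S \supseteq R$ with $|S| = |R|$ and an element $a \in S \cap P$ with $a \notin P'$. To prove it, I would first check that $P \not\subseteq P''$ and $P' \not\subseteq P''$ for every $P'' \in \C$: writing $P'' \in \C_j$, if $j \in X$ then $P$ is over $\C_j$ and hence not under $\C_j$, so $P \not\subseteq P''$; if $j \notin X$, then since $P \in \overline{\S}_T^1(X)$ it contains some $Q_i \in \Min(T)$ under $\C_i$ for each $i \in X$, and $P \subseteq P''$ would give $Q_i \subseteq P'' \in \C_j$, contradicting condition (ii) of Definition \ref{minfeasible} (which forces $j = i \in X$); the same holds for $P'$. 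Hence $P \cap P' \not\subseteq P''$ for every $P'' \in \C$, as $P''$ is prime. Now choose $b \in P \setminus P'$, and let $D$ be the union over $P'' \in \C$ of full sets of coset representatives of the elements of $T/P''$ algebraic over $R/(P'' \cap R)$, so $|D| \le \aleph_0 < |T|$ because $R$ is countable. Applying Lemma \ref{old 3.4} to the ideal $P \cap P'$, the countable collection $\C$, and the set $\{\, r-b \mid r \in D \,\}$, I get $j \in P \cap P'$ with $(b+j) + P''$ transcendental over $R/(P'' \cap R)$ for every $P'' \in \C$. Put $a = b+j$. Then $a \in P$; $a \notin P'$ since $b \notin P'$ while $j \in P'$; and $a$ is transcendental modulo every $P'' \in \C$, hence, because $R$ is JMT so that $R \cap P'' = R \cap Q$ whenever $Q \in \Min(T)$ lies under $P''$ (and every minimal prime of $T$ does lie under some member of $\C$), also modulo every $Q \in \Min(T)$. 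By Lemma \ref{old 3.3} the ring $R[a]$ satisfies conditions (ii)--(iv) of Definition \ref{Min-T subring}, it inherits the converse of (iii) from $R$, and $|R[a]| = |R| < |T|$, so by Lemma \ref{old 3.2} the localization $S = R[a]_{(R[a] \cap M)}$ is a JMT subring of $T$ with $|S| = |R|$; and $a \in (S \cap P) \setminus P'$, so $S \cap P \not\subseteq P'$.

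To finish, I would list the finitely many ordered pairs $(P_1,P'_1),\dots,(P_N,P'_N)$ and apply the claim repeatedly, producing JMT subrings $R_0 \subseteq R_1 \subseteq \cdots \subseteq R_N$ with $R_k \cap P_k \not\subseteq P'_k$; since enlarging a JMT subring preserves every separation already obtained and preserves countability, $\overline{R} := R_N$ is a countable JMT subring with $\overline{R} \cap P \not\subseteq \overline{R} \cap P'$ for each of those pairs, which is precisely the asserted incomparability of $P \cap \overline{R}$ and $P' \cap \overline{R}$. The only delicate point is the construction of the separating element $a$: it must simultaneously lie in $P$, avoid $P'$, and be transcendental modulo every member of the countably infinite family $\C$ so that adjoining it preserves the JMT intersection properties. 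Fixing $b \in P \setminus P'$ and perturbing inside the ideal $P \cap P'$ reduces this to one use of Lemma \ref{old 3.4}, but doing so requires $P \cap P' \not\subseteq P''$ for all $P'' \in \C$, which is exactly where the minfeasible-partition structure of the elements of $\S_T^1(X)$ enters.
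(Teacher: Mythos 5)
Your proof is correct, and it reaches the same goal by a slightly different mechanism for constructing the separating element. The paper, following Lemma 3.10 of \cite{small09}, first picks $p_k \in P_k$ avoiding the other $P_j$'s and every $P'' \in \C$, then finds a \emph{unit} $u = t'+1$ (with $t' \in M$ transcendental modulo every $P'' \in \C$ over $R[p_k]$) and adjoins the product $u p_k$; a short polynomial computation then shows $u p_k + P''$ is transcendental. You instead fix $b \in P \setminus P'$ and perturb \emph{additively}: applying Lemma \ref{old 3.4} to the ideal $P \cap P'$ and the translated coset-representative set $\{r-b\}$ produces $j \in P \cap P'$ with $a = b+j$ directly transcendental modulo every $P'' \in \C$, whence $a \in P$, $a \notin P'$. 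The additive route is a bit more self-contained: it makes the hypothesis ``$P \cap P' \not\subseteq P''$ for all $P'' \in \C$'' the visible pivot (which you verify cleanly from the minfeasible partition axioms), it sidesteps the polynomial identity used to transfer transcendence of $u$ to transcendence of $u p_k$, and it avoids any implicit appeal to prime avoidance over the possibly infinite family $\C$ when choosing the initial element. The paper's approach handles all $P_j \neq P_k$ in a single adjunction, while you separate one ordered pair at a time; since the total number of pairs is finite, both finish in finitely many steps and yield a countable $\overline{R}$. Both proofs close in the same way via Lemmas \ref{old 3.3} and \ref{old 3.2}.
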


\begin{proof}
\par Let $X \subseteq \{ 1,\ldots,m \}$ with $|X| \ge 2$. Every  prime ideal $P \in \mathcal{S}_T^1(X)$ is minimal over a sum $\sum_{i \in X} Q_i$ for $Q_i \in \Min(T)$ where $|X| \ge 2$, so there can only be finitely many such ideals $P$. Since $Q_i$ is in $\Min(T)$ for every $i$, there are only finitely many possible sets, and thus $\mathcal{S}_T^1(X)$ is finite. 
\par Let $R$ be the JMT subring constructed in Lemma \ref{Byron 2.13}, and note that $R$ is countable because $R$ is obtained by adjoining finitely many elements to the prime subring, which is countable. Fix $X$ and let $\mathcal{S}_T^1(X)$ be $\{ P_1, \ldots, P_n \}$. Then fix $k \in \{ 1, \ldots, n \}$. The elements of $\S_T^1(X)$ are incomparable by definition, so we can pick an element $p_k \in P_k$ such that $p_k \notin P_j$ for any $j \neq k$ and $p_k \notin P$ for any $P \in \C$. We will show that there exists a unit $u$ such that $R'= R[up_k]_{R[up_k] \cap M}$ is a JMT subring of $T$ with $P_k \cap R' \neq P_j \cap R'$ for any $j \neq k$.
\par This part of the proof is similar to the proof of Lemma 3.10 in \cite{small09}. Define $S = R[p_k]$, and note that $|S| = |R| < |T|$. Apply Lemma \ref{Byron 2.2} with $t = 0,q = 1$, and $J = M$ to find an element $t' \in M$ such that, for each $P \in \C$, $t' + P \in T/P$ is transcendental over $S/(P \cap S)$. Let $u = t' + 1$. Note that $u$ is a unit, and that $u + P \in T/P$ is transcendental over $S/(P \cap S)$, because $1 \in S$. Define $p = up_k$. Then $p \in P_k$, $p \notin P_j$ for any $j \neq k$, and $p_k \notin P$ for any $P \in \C$. Let $R' = R[p]_{R[p] \cap M}$ and note that $P_k \cap R' \neq P_j \cap R'$ for any $j \neq k$ and $P_k \cap R' \neq P \cap R'$ for any $P \in \C$.
\par Now we show that $p + P \in T/P$ is transcendental over $R/(P \cap R)$ for every $P \in \C$. By Lemma \ref{old 3.3}, this implies conditions (ii), (iii), and (iv) of Definition \ref{Min-T subring} hold for $R[p]$. Let $f \in R[p] \cap P$ for some $P \in \C$. Then
\begin{align}
    \notag f &= r_n p^n + \cdots + r_1 p + r_0 \\
    \notag &= r_n(p_ku)^n + \cdots + r_1(p_k u) + r_0 \\
    \notag &= (r_n p_k^n)u^n + \cdots + (r_1 p_k)u + r_0.
\end{align}
Hence we can express $f$ as an element of $S[u] \cap P$. Since $u + P \in T/P$ is transcendental over $S/(P \cap S)$, this implies that $r_i p_k^i \in P$, and thus $r_i \in P$ for all $i = 1, \ldots, n$. Hence, $p + P \in T/P$ is transcendental over $R/(P \cap R)$. It follows that $R' = R[p]_{R[p] \cap M}$ is a JMT subring of $T$.
\par Now repeat the process for each $k \in \{ 1, \ldots ,n \}$.
In this way, we obtain a JMT subring $R''$ and elements $p_1, p_2, \ldots ,p_n$ such that, for every $i = 1,2, \ldots ,n$, $p_i \in P_i \cap R''$ and $p_i \not\in P_j \cap R''$ when $j \neq i$.  Starting with $R''$, repeat this process for another choice of $X$, and continue until all possible sets $X$ are exhausted to obtain a JMT subring $\overline{R}$ of $T$.  It follows that, for all $X \subseteq \{ 1, \ldots, m \}$ with $|X| \ge 2$ and for any $P, P' \in \mathcal{S}_T^1(X)$, $P \cap \overline{R}$ and $P' \cap \overline{R}$ are incomparable. Then, observe that $\overline{R}$ was obtained by adjoining finitely many elements to a countable subring, and hence $\overline{R}$ is countable.
\end{proof}

We first state a general version of a theorem, and then the countable case as a corollary.

\begin{thm} \label{countable case}
Let the setup be as in Theorem \ref{main theorem}. Then $T$ is the completion of a subring $A$ such that
\begin{enumerate}[label={(\roman*)}]
    \item If $T/M$ is infinite, then $|A| = |T/M|$. If $T/M$ is finite, then $A$ is countable.
    \item $\Min(A) = \{ \C_1 \cap A, \ldots, \C_m \cap A \}$
    \item For all $X \subseteq \{ 1, \ldots, m \}$ with $|X| \ge 2$, there is a bijection between $\S_T^1(X)$ and $\S_A^1(X)$.
\end{enumerate}
\end{thm}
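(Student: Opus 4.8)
The plan is to re-run the construction behind Theorem \ref{main theorem}, but to seed it with the \emph{countable} base ring $\overline{R}$ produced in Lemma \ref{countable set-up} and to ``close up'' $A$ only against the cosets of $M^{2}$, not against cosets of every ideal avoiding $\C$. Concretely, set $R_{0}=\overline{R}$. By Lemma \ref{residue field powers} the ring $T/M^{2}$ has at most $|T/M|$ elements (finitely many if $T/M$ is finite), so I would well-order its cosets with order type a cardinal $\kappa\le\max(\aleph_{0},|T/M|)$. Recursively, at a successor stage apply Lemma \ref{old 3.8} with $J=M^{2}$ (legitimate because $M^{2}\nsubseteq P$ for every non-maximal prime $P$) to put the next coset into the image of $R_{\alpha}\to T/M^{2}$, keeping $IT\cap R_{\alpha}=I$ for all finitely generated $I$; at a limit stage take the union and invoke Lemma \ref{old 3.1}. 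Each step preserves cardinality, and since $T$ is uncountable (as $\dim T\ge 1$) while every proper initial segment of the well-order has cardinality $<\kappa\le|T|$, every $R_{\alpha}$ has cardinality $<|T|$ and so remains a JMT subring. Put $A=\bigcup_{\alpha}R_{\alpha}$; it is a Min-T subring of $T$, the map $A\to T/M^{2}$ is onto, and $IT\cap A=I$ for all finitely generated ideals $I$ (property (iv) of Lemma \ref{old 3.8} is inherited by unions), so $A$ is Noetherian with $\widehat{A}=T$ by Proposition \ref{cpm}. Since $A/(M\cap A)\cong T/M$ we get $|A|\ge|T/M|$, and since $A$ is infinite while $|A|\le\max(\aleph_{0},|T/M^{2}|)$, Lemma \ref{residue field powers} yields (i). For (ii), I would argue as in Theorem \ref{main theorem}: $T$ is faithfully flat over $A$, so $\Min(A)$ consists of the minimal elements of $\{\,Q\cap A : Q\in\Min(T)\,\}=\{\,\C_{1}\cap A,\dots,\C_{m}\cap A\,\}$; since $\overline{R}$ (hence $A$) satisfies the converse of condition (iii) of Definition \ref{Min-T subring}, these ideals are pairwise distinct, and Lemma \ref{General Poset Property} together with condition (ii) of a minfeasible partition rules out a strict inclusion among them, so all of them are minimal.

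The heart of the statement is (iii), and here the work has essentially been done already. Fix $X\subseteq\{1,\dots,m\}$ with $|X|\ge 2$; I claim $P\mapsto P\cap A$ restricts to a bijection $\S_T^1(X)\to\S_A^1(X)$. It lands in $\S_A^1(X)$: for $P\in\S_T^1(X)\subseteq\S_T(X)$, Theorem \ref{success} gives $P\cap A\in\S_A(X)$, so $P\cap A\supseteq\sum_{i\in X}\C_{i}\cap A$ and hence contains some $p_{0}\in\S_A^1(X)$; by Corollary \ref{height-one map} we may write $p_{0}=P''\cap A$ with $P''\in\S_T^1(X)$, and if $P\cap A\ne p_{0}$ then $P''\ne P$ yet $P''\cap\overline{R}\subseteq P\cap\overline{R}$, contradicting the incomparability guaranteed by Lemma \ref{countable set-up}; thus $P\cap A=p_{0}\in\S_A^1(X)$. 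The map is injective because distinct $P,P'\in\S_T^1(X)$ already have $P\cap\overline{R}\ne P'\cap\overline{R}$ by Lemma \ref{countable set-up}, so $P\cap A\ne P'\cap A$. Finally, surjectivity onto $\S_A^1(X)$ is exactly the conclusion of Corollary \ref{height-one map}. This gives (iii).

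The main obstacle I anticipate is bookkeeping rather than ideas: one must verify that restricting the closing-up to cosets of $M^{2}$ still forces $\widehat{A}=T$ — which is precisely the role of Proposition \ref{cpm} together with property (iv) of Lemma \ref{old 3.8} — and that cardinalities never reach $|T|$ during the recursion, so that Lemmas \ref{old 3.8} and \ref{old 3.1} stay applicable to every intermediate ring. The one genuinely delicate case is $|T/M|=|T|$, handled by choosing the well-order on the cosets of $M^{2}$ to have order type the cardinal $|T/M|$, so that every proper initial segment is strictly smaller than $|T|$. Everything concerning the spectra, by contrast, falls straight out of Lemma \ref{countable set-up} and Corollary \ref{height-one map}.
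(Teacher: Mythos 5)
Your proposal is correct and matches the paper's own proof essentially step for step: seed the recursion with the base ring from Lemma \ref{countable set-up}, run the transfinite construction of Theorem \ref{main theorem} restricted to cosets of $M^2$ (so that the cardinality of the union stays at $\max(\aleph_0,|T/M|)$), invoke Proposition \ref{cpm} for $\widehat{A}=T$, and derive the bijection in (iii) from the incomparability of $P\cap R_0$ and $P'\cap R_0$ together with Theorem \ref{success} and Corollary \ref{height-one map}. The only cosmetic difference is that you phrase the cardinality bookkeeping in terms of a chosen order type $\kappa\le\max(\aleph_0,|T/M|)$ where the paper sets $\Omega=T/M^2$ with a well-order in which every element has fewer than $|\Omega|$ predecessors; the argument is the same.
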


\begin{proof}
The proof will be similar to the proof of Theorem \ref{main theorem}. By Lemma \ref{residue field powers}, if $T/M$ is infinite, then $|T/M| = |T/M^2|$ and otherwise $T/M^2$ is finite. Let $\Omega = T / M^2$. Well-order $\Omega$ so that every element has strictly fewer than $|\Omega|$ predecessors.
\par For each $\alpha \in \Omega$, we let $|\alpha| = |\{ \beta \in \Omega \mid \beta \leq \alpha \}|$. Let 0 denote the first element of $\Omega$, and let $R_0$ be the JMT subring constructed in Lemma \ref{countable set-up}. For each $\lambda \in \Omega$ after the first, we define $R_{\lambda}$ recursively as follows: assume $R_{\beta}$ is defined for all $\beta < \lambda$ such that $R_{\beta}$ is a JMT subring, and $|R_{\beta}| \leq |\beta||R_0|$ for all $\beta < \alpha$. As before, let $\gamma(\lambda) = u + J$ denote the least upper bound of the set of predecessors of $\lambda$. If $\gamma(\lambda) < \lambda$, we use Lemma \ref{old 3.8} with $R = R_{\gamma(\lambda)}$ to find a JMT subring $R_{\lambda}$ such that
\begin{enumerate}[label={(\roman*)}]
    \item $R_{\gamma(\lambda)} \subseteq R_{\lambda} \subseteq T$;
    \item if $u \in J$, then $R_{\lambda} \cap J$ contains an element $t \notin \bigcup_{Q \in \Min(T)} Q$;
    \item the coset $\gamma(\lambda) = u+M^2$ is in the image of the map $R_{\lambda} \rightarrow T/M^2$;
    \item for every finitely generated ideal $I$ of $R_{\lambda}$, we have $IT \cap R_{\lambda} = I$;
\end{enumerate}
In this case,
\[
|R_{\lambda}| = |R_{\gamma(\lambda)}| \leq |\gamma(\lambda)||R_0| \leq |\lambda||R_0|
\]
On the other hand, if $\gamma(\lambda) = \lambda$, we let
\[
R_{\lambda} = \bigcup_{\beta < \lambda} R_\beta.
\]
Then $|\lambda| < |\Omega||R_0| = |T/M^2||R_0| \leq |T|$, and $|R_{\lambda}| \leq |\lambda||R_0|$. By Lemma \ref{old 3.1}, $R_{\lambda}$ is a JMT subring of $T$.
\par In either case, if $T/M$ is infinite then $|R_{\lambda}| \leq |T/M|$ and otherwise $R_{\lambda}$ is countable.
\par Let
\[
A = \bigcup_{\alpha \in \Omega} R_\alpha.
\]
Then $(A, A \cap M)$ is a Min-T subring of $T$.
\par Note that the map $A \to T / M^2$ is onto by construction. Next, let $I = (a_1, \ldots, a_n)A$ be a finitely generated ideal of $A$ and $c \in IT \cap A$. Then, for some $\beta \in \Omega$, where $\beta$ has a predecessor, we have $\{ c, a_1, \ldots, a_n \} \subseteq R_{\beta}$. In particular, this yields $c \in (a_1, \dots ,a_n)R_{\beta} \subseteq I$. Hence, $IT \cap A = I$ for all finitely generated ideals $I$ of $A$. By Proposition \ref{cpm}, $A$ is Noetherian and $\widehat{A} = T$. Moreover, since $T$ is faithfully flat over $A$, the ideals $\C_i \cap A$ are the minimal prime ideals of $A$.
\par If $T/M$ is finite, then $A$ is a finite union of countable subrings, so $A$ is countable. If $T/M$ is infinite, then $A$ is a union of cardinality $|T/M|$ of subrings with cardinality $\leq |T/M|$, so $|A| \leq |T/M|$, and thus the isomorphism $A/(M \cap A) \cong T/M$ implies that $|T/M| = |A/(M \cap A)| \leq |A| \leq |T/M|$, so $|A|=|T/M|$.
\par Now, note that for all $X \subseteq \{ 1, \ldots, m \}$ with $|X| \ge 2$, $P \cap R_0$ and $P' \cap R_0$ are incomparable for any distinct $P, P' \in \S_T^1(X)$, so $P \cap A$ and $P' \cap A$ are incomparable for any distinct $P, P' \in \mathcal{S}_T^1(X)$. 
Let $P \in \S_T^1(X)$. We first show that $P \cap A \in \mathcal{S}_A^1(X)$.  Note that $P \in \S_T(X)$, so $P \cap A \in \S_A(X)$, and in particular $P \cap A \supseteq \sum_{i \in X} \C_i \cap A$ so that $P \cap A$ contains a minimal prime ideal $P' \cap A$ over $\sum_{i \in X} \C_i \cap A$. 
Then $P' \cap A \in \S_A^1(X)$, so that we can assume $P' \in \S_T^1(X)$ by Corollary \ref{height-one map}. 
But then $P,P'$ are in $\S_T^1(X)$ so $P \cap A \supseteq P' \cap A$ implies $P = P'$ and it follows that $P \cap A \in \mathcal{S}_A^1(X)$.
Note that the map $P \mapsto P \cap A$ is a surjection from $\mathcal{S}_T^1(X)$ to $\mathcal{S}_A^1(X)$ and that, because $P \cap A \neq P' \cap A$ for any distinct $P, P' \in \mathcal{S}_T^1(X)$, this map is a bijection.
\end{proof}

\begin{cor} \label{countable case cor}
Let the setup be as in Theorem \ref{main theorem}. Suppose that $T/M$ is countable. Then $T$ is the completion of a countable subring $A$ such that
\begin{enumerate}[label={(\roman*)}]
    \item $\Min(A) = \{ \C_1 \cap A, \ldots, \C_m \cap A \}$
    \item For all $X \subseteq \{ 1, \ldots, m \}$ with $|X| \ge 2$, there is a bijection between the sets $\S_T^1(X)$ and $\S_A^1(X)$
\end{enumerate}
\end{cor}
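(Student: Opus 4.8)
The plan is to derive this statement directly from Theorem \ref{countable case}, of which it is a specialization. First I would apply Theorem \ref{countable case} to the given complete local ring $(T,M)$ and minfeasible partition $\P$ (the hypothesis ``the setup as in Theorem \ref{main theorem}'' is assumed verbatim in the corollary, so nothing needs to be rechecked there), obtaining a subring $A$ with $\widehat{A} = T$ that satisfies conclusions (i)--(iii) of that theorem.

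The one observation to record is that the added hypothesis ``$T/M$ is countable'' means $T/M$ is either finite or countably infinite, and in either case conclusion (i) of Theorem \ref{countable case} forces $A$ to be countable: if $T/M$ is finite then $A$ is countable outright, and if $T/M$ is countably infinite then $|A| = |T/M| = \aleph_0$. Having established that $A$ is countable, conclusion (i) of the present corollary is exactly conclusion (ii) of Theorem \ref{countable case}, namely $\Min(A) = \{\C_1 \cap A, \ldots, \C_m \cap A\}$, and conclusion (ii) of the present corollary, the bijection between $\S_T^1(X)$ and $\S_A^1(X)$ for every $X \subseteq \{1,\ldots,m\}$ with $|X| \ge 2$, is exactly conclusion (iii) of Theorem \ref{countable case}. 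Thus no further argument is required.

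There is essentially no obstacle; the ``hard'' work has all been done in Lemma \ref{countable set-up} and Theorem \ref{countable case}. The only point warranting a sentence of care is the case split on whether $T/M$ is finite or infinite, since Theorem \ref{countable case}(i) is phrased differently in the two cases but yields countability of $A$ in both. One may also note in passing that, because $A$ is countable, $\Spec(A)$ is countable, which contrasts with the general construction of Theorem \ref{main theorem}, whose precompletion can have uncountable spectrum.
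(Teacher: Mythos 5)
Your proposal is correct and matches the paper's (implicit) reasoning exactly: the corollary is stated without a separate proof precisely because, as you observe, it follows immediately from Theorem \ref{countable case} once one notes that ``$T/M$ countable'' forces $A$ to be countable under either branch of conclusion (i) of that theorem. The case split on $T/M$ finite versus countably infinite is the only point of substance, and you handle it correctly.
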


\begin{rmk}
Note that given any complete local ring $(T,M)$ with minimal prime ideals $Q_1,...,Q_n$, we can construct a minfeasible partition on $T$ given by $\C_i = \{ Q_i \}$ for all $i$. Then, by Theorem \ref{countable case}, we see that, if $T/M$ is infinite, $(T,M)$ has a precompletion $A$ with $n$ minimal prime ideals where $|A|=|T/M|$, and if $T/M$ is finite, then $(T,M)$ has a precompletion $A$ with $n$ minimal prime ideals where $A$ is countable.  
\end{rmk}

\begin{ex}\label{standardexample}
Consider the complete local ring $\frac{\mathbb{Q}[[x,y,z]]}{( xyz )}$ from Example \ref{spec diagram ex}, and the minfeasible partition $\C =  \{ ( x ), ( y ), ( z ) \}$, $\C_1 =  \{ ( x ), ( y ) \}$, and $\C_2 =  \{( z ) \}$ from Example \ref{minfeasible ex}. Using Theorem \ref{main theorem} and Corollary \ref{countable case cor}, we can construct a countable precompletion $A$ and uncountable precompletion $A'$ of $\frac{\mathbb{Q}[[x,y,z]]}{( xyz )}$ such that $A$ and $A'$ have two minimal prime ideals and where $|\S_A^1(1,2)|=2= |\S_{A'}^1(1,2)|$. The Spec diagrams of $A$ and $A'$ are below:

\begin{center}

  \begin{tikzpicture}
        \node[label=above:{$( x,y,z ) \cap A$}] (A) at (3, 3) {};
        \node[shape=rectangle,draw=black,minimum size=0.5cm,label={[label distance=-0.5cm,align=center]\small$\aleph_0$}] (B) at (1, 1) {};
        \node[shape=rectangle,draw=black,minimum height=1.0cm,minimum width =1.6cm,label={[label distance=-1.05cm,align=center]\scriptsize$( x,z ) \cap A$  \\ \scriptsize$\neq ( y,z ) \cap A$}] (C) at (3, 1) {};
        \node[shape=rectangle,draw=black,minimum size=0.5cm,label={[label distance=-0.5cm,align=center]\small$\aleph_0$}] (D) at (5, 1) {};
        \node[label={[label distance=-1.4cm,align=center]\scriptsize $( x ) \cap A$ \\ \scriptsize $( y ) \cap A$}] (E) at (2, -1) {};
        \node[label=below:\scriptsize $( z ) \cap A$] (F) at (4, -1) {};
            
        \foreach \n in {A,E,F}
            \node at (\n)[circle,fill,inner sep=1.5pt]{};
        
        \path [-] (A) edge node[left] {} (B);
        \path [-] (A) edge node[left] {} (C);
        \path [-] (A) edge node[left] {} (D);
        \path [-] (B) edge node[left] {} (E);
        \path [-] (C) edge node[left] {} (E);
        \path [-] (C) edge node[left] {} (F);
        \path [-] (D) edge node[left] {} (F);

        \node[label=above:{$( x,y,z ) \cap A'$}] (A) at (10, 3) {};
        \node[shape=rectangle,draw=black,minimum size=0.5cm,label={[label distance=-0.55cm,align=center]$\mathfrak{c}$}] (B) at (8, 1) {};
        \node[shape=rectangle,draw=black,minimum height=1.0cm,minimum width =1.6cm,label={[label distance=-1.05cm,align=center]\scriptsize$( x,z ) \cap A'$  \\ \scriptsize$\neq ( y,z ) \cap A'$}] (C) at (10, 1) {};
        \node[shape=rectangle,draw=black,minimum size=0.5cm,label={[label distance=-0.55cm,align=center]$\mathfrak{c}$}] (D) at (12, 1) {};
        \node[label={[label distance=-1.4cm,align=center]\scriptsize $( x ) \cap A'$ \\ \scriptsize $( y ) \cap A'$}] (E) at (9, -1) {};
        \node[label=below:\scriptsize $( z ) \cap A'$] (F) at (11, -1) {};
            
        \foreach \n in {A,E,F}
            \node at (\n)[circle,fill,inner sep=1.5pt]{};
        
        \path [-] (A) edge node[left] {} (B);
        \path [-] (A) edge node[left] {} (C);
        \path [-] (A) edge node[left] {} (D);
        \path [-] (B) edge node[left] {} (E);
        \path [-] (C) edge node[left] {} (E);
        \path [-] (C) edge node[left] {} (F);
        \path [-] (D) edge node[left] {} (F);

    \end{tikzpicture}
\end{center}
\end{ex}

\section{Maximal Chains}
A ``maximal chain" in a local ring $(T,M)$ is a saturated chain of prime ideals whose minimal element is a minimal prime ideal of $T$ and whose maximal element is the maximal ideal $M$ of $T$. In this section, we examine the relationship between maximal chains in a complete local ring $T$ and maximal chains in precompletions of $T$. We also examine the behavior of maximal chains under the construction of the previous section.
\par We start by building on the following result from \cite{small17}.

\begin{lem}[\cite{small17} Lemma 2.7] \label{2017 2.7}
Let $(T,M)$ be a complete local ring and let $A$ be a local domain such that $\widehat{A} = T$. Suppose $\mathcal{C}_T$ is a chain of prime ideals of $T$ of the form $P_0 \subsetneq \ldots \subsetneq P_{n - 1} \subsetneq M$, where $n \ge 2$ and $P_0 \cap A = (0)$. If the chain $\mathcal{C}_A$ given by $(0) = P_0 \cap A \subseteq \ldots \subseteq P_{n - 1} \cap A \subseteq M \cap A$ has length less than $n$, then $\mathcal{C}_A$ is not saturated.
\end{lem}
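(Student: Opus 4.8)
The plan is to argue by contradiction: assume $\mathcal{C}_A$ is saturated and produce a prime of $A$ that can be inserted into it. First I would make a reduction that is essentially free. Pick a minimal prime $Q^*$ of $T$ with $Q^*\subseteq P_0$; since $A$ is a domain and $Q^*\cap A\subseteq P_0\cap A=(0)$, we get $Q^*\cap A=(0)$. Adjoin $Q^*$ below $P_0$ and refine the resulting finite chain to a saturated chain $Q^*=P_0\subsetneq P_1\subsetneq\cdots\subsetneq P_N=M$ of $T$; a refinement only adds primes, so $N\ge n>\ell$. Contract this saturated chain to $A$: it contains every $P_i\cap A$, hence refines $\mathcal{C}_A$, so either it has more than $\ell$ distinct members — in which case $\mathcal{C}_A$ is already shown to be non‑saturated and we are done — or we have reduced to the situation in which $\mathcal{C}_T$ itself is saturated (and still $\mathcal{C}_A$ has length $\ell<n$). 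Throughout I will use that $T$ is faithfully flat over $A$, so the going‑down theorem holds, that $\dim A=\dim T$, and that, combining Lemma \ref{prelim} with the dimension formula for a flat local map, $\operatorname{ht}_A(\mathfrak a)=\operatorname{ht}_T(\mathfrak a T)$ for every ideal $\mathfrak a$ of $A$ and every minimal prime of $\mathfrak a T$ has the same height as $\mathfrak a$; I will also use that the complete ring $T$ is catenary.

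Next I would locate a collapsed step. Since $\mathcal{C}_A$ has length $\ell<n$, among the consecutive pairs of the contracted chain $(0)=P_0\cap A\subseteq\cdots\subseteq P_{n-1}\cap A\subseteq M\cap A$ some pair must be an equality, say $P_k\cap A=P_{k+1}\cap A=:\mathfrak p$ with $P_k\subsetneq P_{k+1}$ in $T$ (here $P_n:=M$; one also checks $\operatorname{ht}_A(P_{n-1}\cap A)<\dim A$, so the collapse is not at the top and $\mathfrak p\ne M\cap A$). From $\mathfrak p T\subseteq P_k\subsetneq P_{k+1}$ and height preservation, $\operatorname{ht}_T(P_{k+1})\ge\operatorname{ht}_A(\mathfrak p)+1$, so $P_{k+1}$ is not minimal over $\mathfrak p T$: the chain $\mathcal{C}_T$ has entered the formal fiber of $\mathfrak p$ just above the collapsed step, and it remains there until it leaves at the least index $m>k$ with $P_m\cap A\supsetneq\mathfrak p$, where necessarily $m\ge k+2$. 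Put $\mathfrak q:=P_m\cap A$; then $P_k,\dots,P_{m-1}$ all contract to $\mathfrak p$, $P_{m-1}\subsetneq P_m$, and $\mathfrak p$ and $\mathfrak q$ are consecutive in $\mathcal{C}_A$. Using Lemma \ref{prelim} one then chooses a minimal prime $\mathfrak Q$ of $\mathfrak q T$ with $\mathfrak Q\subseteq P_m$ (so $\mathfrak Q\cap A=\mathfrak q$ and $\operatorname{ht}_T(\mathfrak Q)=\operatorname{ht}_A(\mathfrak q)$) and a minimal prime $\mathfrak p_0$ of $\mathfrak p T$ with $\mathfrak p_0\subseteq\mathfrak Q$ (so $\mathfrak p_0\cap A=\mathfrak p$, $\operatorname{ht}_T(\mathfrak p_0)=\operatorname{ht}_A(\mathfrak p)$), and one checks $\mathfrak p_0\subsetneq\mathfrak Q$ — otherwise $\mathfrak Q$ would be minimal over $\mathfrak p T$, forcing $\mathfrak Q\cap A=\mathfrak p$, contradicting $\mathfrak Q\cap A=\mathfrak q\supsetneq\mathfrak p$.

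The crux — and the step I expect to be the main obstacle — is to upgrade $\mathfrak p\subsetneq\mathfrak q$ to the strict inequality $\operatorname{ht}_A(\mathfrak q/\mathfrak p)\ge 2$, equivalently to the existence of a prime $\mathfrak r$ of $A$ with $\mathfrak p\subsetneq\mathfrak r\subsetneq\mathfrak q$; since $\mathfrak p$ and $\mathfrak q$ are consecutive in $\mathcal{C}_A$, such an $\mathfrak r$ contradicts saturation and finishes the proof. The idea is that the detour $P_k\subsetneq\cdots\subsetneq P_{m-1}$ through the formal fiber of $\mathfrak p$, which has length at least one, cannot be absorbed without cost: one wants to show, working inside $T_{P_m}$ and exploiting the catenarity of $T$ together with a going‑down argument from $P_m$ over the inclusion $\mathfrak p\subsetneq\mathfrak q=P_m\cap A$, that $\operatorname{ht}_T(\mathfrak Q)\ge\operatorname{ht}_A(\mathfrak p)+2$ in a way that descends to $A$ — or, equivalently, that the fibre of $A_\mathfrak q\to T_{\mathfrak Q}$ over $\mathfrak p$ has positive dimension, forcing $\operatorname{ht}_{A_\mathfrak q}(\mathfrak q A_\mathfrak q/\mathfrak p A_\mathfrak q)\ge 2$ and hence a prime of $A$ strictly between $\mathfrak p$ and $\mathfrak q$. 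This is the delicate point precisely because $A$ need not be catenary, so one must track carefully which chains in $\operatorname{Spec}(T)$ lie over which primes of $A$ (many chains in $T$ between the formal fibres of $\mathfrak p$ and $\mathfrak q$ contract trivially); once $\operatorname{ht}_A(\mathfrak q/\mathfrak p)\ge 2$ is in hand the remainder of the argument is immediate.
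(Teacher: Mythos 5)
This lemma is not proved in the paper under review: it is quoted verbatim from \cite{small17} and used as a black box in Lemma \ref{chain length}, so there is no in-paper proof to compare against. Evaluated on its own, your proposal has a genuine gap at exactly the step you flag as ``the crux.''

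Your reductions (saturating $\mathcal{C}_T$ after adjoining a minimal prime, locating the first collapsed index $k$, passing to $m = \min\{j > k : P_j \cap A \supsetneq \mathfrak{p}\}$ with $m \ge k+2$, and checking the collapse is not at the top) are all fine. The problem is the final step, where you need a prime of $A$ strictly between $\mathfrak{p}$ and $\mathfrak{q}$. You acknowledge that showing $\operatorname{ht}_T(\mathfrak{Q}) \ge \operatorname{ht}_A(\mathfrak{p})+2$ is insufficient because $A$ may not be catenary and that you really need $\operatorname{ht}_A(\mathfrak{q}/\mathfrak{p}) \ge 2$, but you do not prove either statement --- you only describe what ``one wants to show.'' That step is where all the content of the lemma lives, and leaving it as an intention means the argument is incomplete.

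Beyond incompleteness, I am not convinced the strategy of inserting specifically between the first collapse $\mathfrak{p}$ and the next nonequal contraction $\mathfrak{q}$ can always succeed. Passing to $\bar A = A/\mathfrak{p}$ and $\bar T = T/\mathfrak{p}T$, your claim becomes: if $\bar T$ has a saturated chain $\bar P_k \subsetneq \cdots \subsetneq \bar P_m$ of length $\ge 2$ with $\bar P_i \cap \bar A = (0)$ for $i < m$ and $\bar P_m \cap \bar A = \bar{\mathfrak{q}}$, then $\operatorname{ht}_{\bar A}(\bar{\mathfrak{q}}) \ge 2$. Flatness only gives $\operatorname{ht}_{\bar T}(\bar Q) = \operatorname{ht}_{\bar A}(\bar{\mathfrak{q}})$ for $\bar Q$ a \emph{minimal} prime of $\bar{\mathfrak{q}}\bar T$ inside $\bar P_m$, and $\bar Q$ need not be comparable to $\bar P_{m-1}$; in fact $\bar Q \not\supseteq \bar P_{m-1}$ precisely because $\bar P_{m-1}$ has positive height in the generic fibre, and nothing forces $\operatorname{ht}_{\bar T}(\bar Q) \ge 2$ just because $\operatorname{ht}_{\bar T}(\bar P_m) \ge 2$. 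Unless $\bar P_m = \bar M$ (in which case $\bar{\mathfrak{q}}$ is the maximal ideal and its height is all of $\dim \bar A$, and you are fine), the height of $\bar Q$ can genuinely be smaller than the height of $\bar P_m$. So it is possible that no prime can be inserted between this particular pair $\mathfrak{p}, \mathfrak{q}$, and the insertion must happen elsewhere in $\mathcal{C}_A$. A correct proof either has to argue more globally (for instance, by induction on $n$ together with a careful case analysis on where collapses occur, or by comparing $\dim A$ with the length of $\mathcal{C}_A$ after first collapsing everything in the generic fibre) or has to choose the collapse point differently (e.g., the \emph{last} collapse, where the top of the chain constrains the height). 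As written, the proposal does not constitute a proof.
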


The next lemma generalizes the previous lemma for non-domain precompletions.

\begin{lem} \label{chain length}
Let $(T,M)$ be a complete local ring and let $A$ be a local ring such that $\widehat{A} = T$. Suppose $\mathcal{C}_T$ is a chain of prime ideals of $T$ of the form $P_0 \subsetneq \ldots \subsetneq P_{n - 1} \subsetneq M$, where $n \ge 2$ and $P_0 \cap A \in \Min(A)$. If the chain $\mathcal{C}_A$ given by $P_0 \cap A \subseteq \ldots \subseteq P_{n - 1} \cap A \subseteq M \cap A$ has length less than $n$, then $\mathcal{C}_A$ is not saturated.
\end{lem}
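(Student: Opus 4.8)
The plan is to reduce to the domain case already handled by Lemma \ref{2017 2.7} by passing to the quotient of $A$ by the relevant minimal prime. Set $\mathfrak{p} = P_0 \cap A$, which lies in $\Min(A)$ by hypothesis, so $B := A/\mathfrak{p}$ is a local domain. Completion commutes with passing to the quotient by an ideal contained in the maximal ideal, so the completion of $B$ at its maximal ideal is $\widehat{A}/\mathfrak{p}\widehat{A} = T/\mathfrak{p}T$. Since $\mathfrak{p} = P_0 \cap A \subseteq P_0$ we have $\mathfrak{p}T \subseteq P_0$, hence $\mathfrak{p}T$ is contained in every $P_i$ and in $M$; reducing $\mathcal{C}_T$ modulo $\mathfrak{p}T$ therefore produces a chain $\overline{P}_0 \subsetneq \cdots \subsetneq \overline{P}_{n-1} \subsetneq M/\mathfrak{p}T$ of prime ideals of $T/\mathfrak{p}T$, still of length $n$ (all inclusions remain strict because $\mathfrak{p}T \subseteq P_0 \subsetneq P_1 \subsetneq \cdots \subsetneq M$), and $\overline{P}_0 \cap B = (P_0 \cap A)/\mathfrak{p} = (0)$.

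Next I would transfer the chain $\mathcal{C}_A$ to $B$. The map $\mathfrak{q} \mapsto \mathfrak{q}/\mathfrak{p}$ is an order isomorphism from $\{\mathfrak{q} \in \Spec(A) : \mathfrak{q} \supseteq \mathfrak{p}\}$ onto $\Spec(B)$. Every term of $\mathcal{C}_A$ contains $P_0 \cap A = \mathfrak{p}$, and a direct check shows that the image of $\mathcal{C}_A$ under this map is exactly the chain $\mathcal{C}_B : (0) = \overline{P}_0 \cap B \subseteq \cdots \subseteq \overline{P}_{n-1} \cap B \subseteq (M/\mathfrak{p}T) \cap B$ associated to the reduced chain in the sense of Lemma \ref{2017 2.7}; indeed $(M/\mathfrak{p}T) \cap B = (M \cap A)/\mathfrak{p}$ and $\overline{P}_i \cap B = (P_i \cap A)/\mathfrak{p}$. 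Since this is an order isomorphism, and since any prime lying strictly between two terms of $\mathcal{C}_A$ automatically contains $\mathfrak{p}$ and hence corresponds to a prime of $B$, the chains $\mathcal{C}_A$ and $\mathcal{C}_B$ have the same length and $\mathcal{C}_A$ is saturated if and only if $\mathcal{C}_B$ is saturated.

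Finally I would apply Lemma \ref{2017 2.7} to the complete local ring $T/\mathfrak{p}T$, the local domain $B$, and the reduced chain $\overline{P}_0 \subsetneq \cdots \subsetneq \overline{P}_{n-1} \subsetneq M/\mathfrak{p}T$ (with $n \ge 2$ and $\overline{P}_0 \cap B = (0)$): since $\mathcal{C}_B$ has length less than $n$ — its length equals that of $\mathcal{C}_A$ — the lemma yields that $\mathcal{C}_B$ is not saturated, and therefore $\mathcal{C}_A$ is not saturated. The only steps that are not purely formal are verifying that $\widehat{A/\mathfrak{p}} = T/\mathfrak{p}T$ and that length and saturatedness are preserved under the bijection $\Spec(A/\mathfrak{p}) \cong V(\mathfrak{p}) \subseteq \Spec(A)$; once these are in place the statement follows immediately from the domain case, so I do not expect a serious obstacle.
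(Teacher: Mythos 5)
Your proposal is correct and follows essentially the same route as the paper: both pass to the quotient rings $A/(P_0 \cap A)$ and $T/(P_0 \cap A)T$, observe that the reduced chain in $T/(P_0 \cap A)T$ retains length $n$ while the image of $\mathcal{C}_A$ has the same length and saturation status as $\mathcal{C}_A$, and then invoke Lemma \ref{2017 2.7} for the domain case. You spell out more of the routine verifications (strictness of inclusions after reduction, the order isomorphism $\Spec(A/\mathfrak{p}) \cong V(\mathfrak{p})$), but the underlying argument is identical.
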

\begin{proof}
Examine the chains
\[
\frac{P_0}{(P_0 \cap A)T} \subsetneq \ldots \subsetneq \frac{P_{n-1}}{(P_0 \cap A)T} \subsetneq \frac{M}{(P_0 \cap A)T}
\]
and
\[
(0) = \frac{P_0 \cap A}{P_0 \cap A} \subseteq \ldots \subseteq \frac{P_{n-1} \cap A}{P_0 \cap A} \subseteq \frac{M \cap A}{P_0 \cap A}
\]
in the rings
\[
\frac{A}{P_0 \cap A} \text{ and } \widehat{\frac{A}{P_0 \cap A}} = \frac{T}{(P_0 \cap A)T}
\]
The first chain still has length $n$, and the second chain has length less than $n$, so by Lemma \ref{2017 2.7} the second chain is not saturated. Therefore, the chain $\mathcal{C}_A$ given by $P_0 \cap A \subseteq \ldots \subseteq P_{n - 1} \cap A \subseteq M \cap A$ is not saturated.
\end{proof}

The following lemma essentially states that chain length cannot decrease under the natural map from $\Spec(T)$ to $\Spec(A)$.

\begin{lem} \label{chain lower bound} 
Let $(T,M)$ be a complete local ring and $A$ a local ring with $\widehat{A} = T$. Let $\mathcal{C}_T$ be a chain of prime ideals of $T$ of the form $P_0 \subsetneq \ldots \subsetneq P_{n - 1} \subsetneq M$ with $P_0 \cap A \in \Min(A)$. Then there exists a chain $\mathcal{C}_T'$ in $T$ of the form $P_0' \subsetneq \ldots \subsetneq P_{n - 1}' \subsetneq M$ with length $n$ and $P_0' \cap A \in \Min(A)$ such that the chain $\mathcal{C}_A'$ given by $P_0' \cap A \subsetneq \ldots \subsetneq P_{n - 1}' \cap A \subsetneq M \cap A$ is a chain of length $n$. Moreover, the chain $\mathcal{C}_A$ given by $P_0 \cap A \subseteq \ldots \subseteq P_{n - 1} \cap A \subseteq M \cap A$ is contained in $\mathcal{C}_A'$, so that $\mathcal{C}_A$ is contained in a saturated chain of length at least $n$.
\end{lem}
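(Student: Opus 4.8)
The plan is to reduce the whole statement to a single claim: the longest saturated chain of $\Spec(A)$ running from $P_0\cap A$ up to $M\cap A$ and passing through every prime $P_i\cap A$ has length at least $n$. Granting this, the chains $\mathcal{C}_T'$ and $\mathcal{C}_A'$ are produced by the short descent argument in the last paragraph.

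First dispose of the easy case. If the contraction chain $\mathcal{C}_A$ already has length $n$ --- that is, every inclusion $P_0\cap A\subseteq\cdots\subseteq M\cap A$ is strict --- take $\mathcal{C}_T'=\mathcal{C}_T$ and $\mathcal{C}_A'=\mathcal{C}_A$, and refine $\mathcal{C}_A'$ to a saturated chain with the same endpoints to get the ``moreover'' clause, since refinement never shortens a chain. Note that the top inclusion $P_{n-1}\cap A\subsetneq M\cap A$ is automatically strict: $(M\cap A)T$ is $M$-primary, so $P_{n-1}\supseteq(M\cap A)T$ would force $P_{n-1}=M$.

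Now suppose $\mathcal{C}_A$ has length strictly less than $n$; here I would induct on the ``defect'' $n-\text{length}(\mathcal{C}_A)$. Since $\widehat{A}=T$, the map $A\to T$ is faithfully flat, so Going Down holds for it. By Lemma~\ref{chain length}, $\mathcal{C}_A$ is not saturated, so there are a prime $\mathfrak{q}\in\Spec(A)$ and consecutive distinct terms $\mathfrak{p}=P_a\cap A\subsetneq\mathfrak{p}'=P_{a+1}\cap A$ of $\mathcal{C}_A$ with $\mathfrak{p}\subsetneq\mathfrak{q}\subsetneq\mathfrak{p}'$. Applying Going Down to $A\to T$: from $P_{a+1}$ we obtain $Q\subsetneq P_{a+1}$ with $Q\cap A=\mathfrak{q}$, and descending further along the distinct terms of $\mathcal{C}_A$ below $\mathfrak{p}$ we rebuild the bottom of the chain, producing a chain $\mathcal{C}_T^{\mathrm{new}}$ of $T$ that begins at a prime contracting into $\Min(A)$, passes through $Q$, and then continues $P_{a+1}\subsetneq\cdots\subsetneq P_{n-1}\subsetneq M$. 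Its contraction contains every prime of $\mathcal{C}_A$ together with $\mathfrak{q}$, hence is strictly longer than $\mathcal{C}_A$; and --- this is the delicate point --- by choosing the gap $(\mathfrak{p},\mathfrak{p}')$ appropriately (arranging, after first redistributing the repeated primes of $\mathcal{C}_T$, that no collapse of $\mathcal{C}_A$ lies below it) one keeps the length of $\mathcal{C}_T^{\mathrm{new}}$ in $T$ at least $n$. Iterating, the length of the contraction strictly increases each time and is bounded by $\dim A=\dim T$, so after finitely many steps we reach a chain of $\Spec(A)$, from $P_0\cap A$ to $M\cap A$, containing $\mathcal{C}_A$ and of length at least $n$. (Equivalently: if the process halted with a saturated contraction of length $<n$ while the $T$-chain still had length $\ge n$, a length-$n$ subchain of that $T$-chain whose contraction retains all distinct terms would violate Lemma~\ref{chain length}.)

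Finally, the extraction. Let $\mathcal{D}_A$ be a saturated chain of $\Spec(A)$ from $P_0\cap A$ to $M\cap A$, containing $\mathcal{C}_A$, of length $\ge n$. Pick a subchain $\mathcal{C}_A'$ of $\mathcal{D}_A$ of length exactly $n$ still containing all primes of $\mathcal{C}_A$ (possible since $\mathcal{C}_A$ involves at most $n+1$ primes), and lift $\mathcal{C}_A'$ to $T$ by Going Down from $M$; the bottom of the lift contracts to $P_0\cap A\in\Min(A)$, which gives the required $\mathcal{C}_T'$, while $\mathcal{C}_A'$ refines to a saturated chain of length $\ge n$ through $\mathcal{C}_A$, yielding the last assertion. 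The principal obstacle is the point flagged above: because distinct primes of $T$ in the formal fiber of $P_0\cap A$ (or of $\mathfrak{p}$) need not be comparable, one cannot simply slip a prime over $\mathfrak{q}$ in above $P_a$, but must rebuild the lower part of the chain via Going Down, and must check that this rebuilding can be organized so the total length never drops below $n$ while the defect strictly decreases.
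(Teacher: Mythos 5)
Your overall strategy matches the paper's: use Lemma~\ref{chain length} to find a missing prime, lift it into the $T$-chain via Going Down (the paper's Lemma~\ref{General Poset Property}), rebuild the lower part of the chain, and iterate. The final extraction step is also fine. But there is a concrete gap exactly at the point you flag as ``the delicate point,'' and your proposed resolution does not close it. You rebuild ``along the distinct terms of $\mathcal{C}_A$ below $\mathfrak{p}$,'' which shortens the $T$-chain whenever collapses $P_j\cap A = P_{j+1}\cap A$ occur below the gap; and the fix you suggest, arranging that ``no collapse of $\mathcal{C}_A$ lies below'' the chosen gap, is not always achievable --- Lemma~\ref{chain length} only tells you $\mathcal{C}_A$ fails to be saturated \emph{somewhere}, and it can easily happen that the only available gap sits above all the collapses (e.g.\ a collapse at the bottom of $\mathcal{C}_A$ with the unique non-saturation near $M\cap A$). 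Your parenthetical argument explicitly presupposes ``the $T$-chain still had length $\ge n$,'' which is precisely the thing in doubt.

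The paper resolves this cleanly with a bookkeeping device you don't state: when inserting the new prime $P^*\cap A$ into the gap, \emph{simultaneously} delete one redundant prime $P_j$ of $\mathcal{C}_T$ with $P_j\cap A=P_{j+1}\cap A$ (such a $j$ exists because $\mathrm{length}(\mathcal{C}_A)<n$), and then rebuild downward from the insertion point through \emph{all} the remaining indices of $\mathcal{C}_T$, not only through the distinct contractions. The net count is $(n+1)+1-1=n+1$ primes, so the $T$-chain stays at length exactly $n$ while the contraction's length increases by one; iterating terminates with defect $0$. Alternatively, one can salvage your version: rebuild through all $n+1$ primes of $\mathcal{C}_T$ (letting the $T$-chain grow by one each step), iterate until the contraction is saturated, and only then invoke the parenthetical --- since the $T$-chain length never decreased from $n$, a length-$n$ subchain retaining all distinct contractions, together with Lemma~\ref{chain length}, forces the final contraction to have length $\ge n$. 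Either repair works; the proof as written does not.
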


\begin{proof}
Examine the chain $\mathcal{C}_A$ given by $P_0 \cap A \subseteq \ldots \subseteq P_{n - 1} \cap A \subseteq M \cap A$. If $\mathcal{C}_A$ has length $n$, we are done. If $\mathcal{C}_A$ has length $m<n$, then by Lemma \ref{chain length}, $\mathcal{C}_A$ is not saturated, so we can insert some prime ideal $P^* \cap A$ into the chain and increase its length. Simultaneously we can remove a prime ideal $P_j$ such that $P_j \cap A = P_{j+1} \cap A$. Then we have the chain (note the order of $j,k$ is ambiguous):
\[
P_0 \cap A \subseteq \ldots\subseteq P_{k-1} \cap A \subsetneq P^* \cap A \subsetneq P_k \cap A \subseteq \ldots \subseteq P_{j-1} \cap A \subseteq P_{j+1} \cap A \subseteq \ldots \subseteq P_{n - 1} \cap A \subseteq M \cap A
\]
of length $m + 1$. If $P^* \nsubseteq P_k$, then by Lemma \ref{General Poset Property} there exists a prime ideal $P^{**}$ in T such that $P^{**} \subseteq P_k$ and $P^{**} \cap A = P^* \cap A$. 
Then, again by Lemma \ref{General Poset Property}, if $P^{**} \nsupseteq P_{k - 1}$, there exists a prime ideal $P_{k - 1}'$ in T such that $P_{k - 1}' \subseteq P^{**}$ and $P_{k - 1}' \cap A = P_{k - 1} \cap A$.
Continuing in this manner, we construct a chain $\mathcal{C}_T'$ in $T$ of the form $P_0' \subsetneq \ldots \subsetneq P_{n - 1}' \subsetneq M$ of length $n$ such that the chain $\mathcal{C}_A'$ given by $P_0' \cap A \subseteq \ldots \subseteq P_{n - 1}' \cap A \subseteq M \cap A$ has length $m + 1$. 
If $m + 1 < n$, repeat the above process to eventually obtain a chain $\mathcal{C}_T'$ in $T$ of the form $P_0' \subsetneq \ldots \subsetneq P_{n - 1}' \subsetneq M$ of length $n$ such that the chain $\mathcal{C}_A'$ given by $P_0' \cap A \subsetneq \ldots \subsetneq P_{n - 1}' \cap A \subsetneq M \cap A$ has length $n$. 
Note that $\mathcal{C}_A'$ is constructed so that $\mathcal{C}_A$ is contained in $\mathcal{C}_A'$, and hence $\mathcal{C}_A$ is contained in a saturated chain of length at least $n$.
\end{proof}

The next lemma states that given a chain of prime ideals of length $n$ in $A$, it must have come from a chain of prime ideals of length at least $n$ in $T$.
\begin{lem} \label{chain upper bound}
Let $(T, M)$ be a complete local ring and $A$ a local ring such that $\widehat{A} = T$. Let $P_0 \cap A \subsetneq \ldots \subsetneq P_{n - 1} \cap A \subsetneq M \cap A$ where $P_i \in \Spec (T)$ be a chain of prime ideals in $A$ of length $n$ where $P_0 \cap A \in \Min(A)$. Then there exists a chain of length $n$ in $T$ of the form $P_0' \subsetneq \ldots \subsetneq P_{n - 1}' \subsetneq M$, such that $P_0' \in \Min(T)$ and $P_i' \cap A = P_i \cap A$ for all $i$.
\end{lem}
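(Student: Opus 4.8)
The plan is to build the desired chain in $T$ from the top down, using Lemma \ref{General Poset Property} at each step to pull back the strict containments occurring in $\Spec(A)$ to strict containments in $\Spec(T)$, and then to make one adjustment at the bottom so that the chain starts at a minimal prime of $T$ rather than merely at some prime lying over $P_0 \cap A$.

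Write $\mathfrak{p}_i = P_i \cap A$, so that $\mathfrak{p}_0 \subsetneq \mathfrak{p}_1 \subsetneq \cdots \subsetneq \mathfrak{p}_{n-1} \subsetneq M \cap A$ with $\mathfrak{p}_0 \in \Min(A)$. First I would set $P_{n-1}' := P_{n-1}$; since $P_{n-1} \cap A = \mathfrak{p}_{n-1} \neq M \cap A$, we have $P_{n-1}' \subsetneq M$. Then, proceeding downward, suppose $P_j'$ has been constructed with $P_j' \cap A = \mathfrak{p}_j$ for some $j \ge 1$. Because $P_j' \cap A = \mathfrak{p}_j \supseteq \mathfrak{p}_{j-1} = P_{j-1} \cap A$, Lemma \ref{General Poset Property} yields a prime $P_{j-1}'$ of $T$ with $P_{j-1}' \subseteq P_j'$ and $P_{j-1}' \cap A = P_{j-1} \cap A = \mathfrak{p}_{j-1}$; and since $\mathfrak{p}_{j-1} \neq \mathfrak{p}_j$, the containment $P_{j-1}' \subsetneq P_j'$ is strict. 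Iterating down to $j = 1$ produces a chain $P_0' \subsetneq P_1' \subsetneq \cdots \subsetneq P_{n-1}' \subsetneq M$ of length $n$ in $T$ with $P_i' \cap A = P_i \cap A$ for every $i$.

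It remains only to arrange $P_0' \in \Min(T)$. Choose a minimal prime $P_0''$ of $T$ with $P_0'' \subseteq P_0'$ (possible since every prime of a Noetherian ring contains a minimal prime). Then $P_0'' \cap A$ is a prime ideal of $A$ contained in $P_0' \cap A = \mathfrak{p}_0$, and since $\mathfrak{p}_0$ is a minimal prime of $A$, this forces $P_0'' \cap A = \mathfrak{p}_0$. Replacing $P_0'$ by $P_0''$ preserves all the intersection conditions, and $P_0'' \subseteq P_0' \subsetneq P_1'$ keeps the chain strictly increasing of length $n$; renaming $P_0''$ as $P_0'$ completes the argument.

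The one genuinely delicate point is this final adjustment: Lemma \ref{General Poset Property} only guarantees some prime of $T$ lying over $\mathfrak{p}_0$, not a minimal one, so one must separately pass to a minimal prime of $T$ beneath it and invoke the minimality of $\mathfrak{p}_0$ in $\Spec(A)$ to see that this loses nothing. Everything else is a routine top-down application of the correspondence between containments in $\Spec(A)$ and $\Spec(T)$ established earlier.
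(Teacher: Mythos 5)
Your proof is correct and follows essentially the same route as the paper's: a top-down application of Lemma~\ref{General Poset Property} to pull each strict containment in $\Spec(A)$ back to a strict containment in $\Spec(T)$, followed by a final replacement of $P_0'$ by a minimal prime of $T$ beneath it, using minimality of $P_0 \cap A$ in $\Spec(A)$ to see the intersection is unchanged. No substantive differences.
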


\begin{proof}
Start with the set $\{ P_0, \ldots, P_{n - 1}, M \}$. It is clear that $P_{n - 1} \subsetneq M$. Let $P_{n - 1}' = P_{n - 1}$. If $P_{n - 2} \nsubseteq P_{n - 1}'$, then by Lemma \ref{General Poset Property}, we can find $P_{n - 2}'$ such that $P_{n - 2}' \cap A = P_{n - 2} \cap A$ and $P_{n - 2}' \subseteq P_{n - 1}'$. Because $P_{n - 2}' \cap A = P_{n - 2} \cap A \neq P_{n-1} \cap A = P_{n-1}' \cap A$, we have that $P_{n - 2}' \subsetneq P_{n - 1}'$. Continuing in this manner, we can obtain the desired chain except that $P_0'$ is not necessarily in $\Min (T)$. Then note that $P_0'$ must contain some $Q \in \Min (T)$, and that $P_0' \cap A = Q \cap A$, so we can replace $P_0$ with $Q$ to obtain the desired chain.
\end{proof}

The following theorem states that a maximal chain of length $n$ can only exist in the precompletion of $T$ if there exists a chain of length $n$ in $T$. In addition, using the results of the previous section, we can construct precompletions that control the minimal prime ideals in the manner of Theorem \ref{main theorem} and preserve all of the chain lengths of $T$.

\begin{thm} \label{chain preservation thm}
Let $(T,M)$ be a complete local ring of dimension at least two with $n$ minimal prime ideals $\{ Q_1, \ldots ,Q_n \}$ such that the coheight of $Q_i$ is $c_i$. Then, for every subring $A$ of $T$ such that $\widehat{A} = T$
\begin{enumerate}[label={(\roman*)}]
    \item every saturated chain $P_0 \cap A \subsetneq \ldots \subsetneq P_{n - 1} \cap A \subsetneq M \cap A$ with $P_i \in \Spec (T)$ in $A$ such that $P_0 \cap A \in \Min(A)$ has length $c_i$ for some $i$. 
    \item Every minimal prime ideal $q$ of $A$ has coheight equal to $\textup{max}\{\textup{coht} (Q) \mid Q \in \Min(T), Q \cap A = q \}$.
    \end{enumerate}
Furthermore, given any minfeasible partition $\mathcal{P} = (\C, \{ \C_i \}_{i = 1}^{m})$ on $T$, if $T$ satisfies the conditions of Theorem \ref{main theorem}, there exists a local subring $A$ of $T$ with $\widehat{A} = T$ such that:
\begin{enumerate}[label={(\roman*)}]
    \item $\Min(A) = \{ \C_1 \cap A, \ldots, \C_m \cap A \}$, and the formal fiber of $\C_i \cap A$ has countably many maximal elements, which are precisely the elements of $\C_i$
    \item If $J$ is an ideal of $T$ such that $J \subsetneq P$ for every $P \in \C$, then the natural map $A \rightarrow T/J$ is onto so that there is a bijection between $\{ J \in \Spec(T) \mid J \not \subseteq P \text{ for every } P \in \C \}$ and $\{ j \in \Spec(A) \mid j \not \subseteq q \text{ for every } q \in \Min(A) \}$.
    \item Using the notation of Theorem \ref{success}, for all $X \subseteq \{ 1, \ldots, m \}$ we have a bijection between $\mathcal{S}_T(X)$ and $\mathcal{S}_A(X)$.
    \item For every $Q_i$, there is a chain $Q_i \subsetneq P_1  \subsetneq \ldots \subsetneq P_{c_i-1} \subsetneq M$ of prime ideals of $T$ such that the chain $Q_i \cap A \subsetneq P_1 \cap A \subsetneq \ldots \subsetneq P_{c_i-1} \cap A \subsetneq M \cap A$ has length $c_i$.
\end{enumerate}
\end{thm}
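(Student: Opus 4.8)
The plan is to separate the two universal assertions (i) and (ii), valid for any precompletion $A$, from the ``furthermore,'' which I would handle by checking that the ring produced by Theorem~\ref{main theorem} already satisfies the extra property (iv). Throughout I use that a complete local ring is catenary (being a homomorphic image of a regular local ring), so that in the complete local domain $T/Q$ every saturated chain from $(0)$ to the maximal ideal has length $\dim(T/Q)$, and $\text{ht}(\mathfrak p)+\dim((T/Q)/\mathfrak p)=\dim(T/Q)$ for every prime $\mathfrak p$.

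\emph{Assertion (i).} Given a saturated chain $P_0\cap A\subsetneq\cdots\subsetneq P_{\ell-1}\cap A\subsetneq M\cap A$ of length $\ell$ with $P_0\cap A\in\Min(A)$, I would first invoke Lemma~\ref{chain upper bound} to lift it to a chain $Q_i\subsetneq P_1'\subsetneq\cdots\subsetneq P_{\ell-1}'\subsetneq M$ of length $\ell$ in $T$ with $Q_i\in\Min(T)$ and $P_j'\cap A=P_j\cap A$; this already gives $\ell\le\cht(Q_i)=c_i$. For the reverse inequality, suppose $\ell<c_i$. Then the lifted chain is not saturated in $T$, so it refines to a saturated chain $Q_i=\tilde P_0\subsetneq\cdots\subsetneq\tilde P_{c_i-1}\subsetneq M$ of length $c_i$. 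Contracting this chain to $A$ gives a chain that contains the original one; if it is strictly longer, a prime of $A$ has been inserted strictly between two consecutive terms of the original chain, and if it has length exactly $\ell$, then by Lemma~\ref{chain length} (with $n=c_i$) this contracted chain, which then coincides with the original one, is not saturated. Either way this contradicts saturation of the original chain, so $\ell=c_i$. (The degenerate possibility $c_i\le 1$ with $\ell<c_i$ would force $P_0\cap A=M\cap A$, contradicting $\dim A=\dim T\ge 2$.)

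\emph{Assertion (ii).} Here I would avoid chain combinatorics and pass to completions: $\cht(q)=\dim(A/q)=\dim\widehat{A/q}=\dim(T/qT)=\max\{\cht(Q'):Q'\in\Min(qT)\}$. It then suffices to show $\Min(qT)=\{Q\in\Min(T):Q\cap A=q\}$. If $Q'\in\Min(qT)$, then $Q'\cap A=q$ by Lemma~\ref{prelim}, and $Q'$ contains some $Q\in\Min(T)$ with $Q\cap A\subseteq q$, hence $Q\cap A=q$ by minimality of $q$; thus $qT\subseteq Q\subseteq Q'$, and minimality of $Q'$ over $qT$ forces $Q=Q'\in\Min(T)$. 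Conversely, if $Q\in\Min(T)$ with $Q\cap A=q$, then $qT\subseteq Q$ and $Q$, being minimal in $T$, is minimal over $qT$. This yields (ii).

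\emph{The ``furthermore''.} Let $A$ be a precompletion of $T$ produced by Theorem~\ref{main theorem}, so that (i)--(iii) of the final statement hold automatically; it remains to verify (iv) for this $A$. Fix $Q_i$ and let $k$ be the unique index with $Q_i$ under $\C_k$, so $Q_i\cap A=\C_k\cap A\in\Min(A)$. If $c_i\le 1$ the required chain is trivial (when $c_i=1$ one uses (ii) to see $\C_k\cap A\neq M\cap A$). If $c_i\ge 2$, work in the complete local domain $\overline T=T/Q_i$ of dimension $c_i$. The images in $\overline T$ of the (at most countably many) elements of $\C$ that contain $Q_i$ are non-maximal primes; let $C$ be the set of their maximal members, a countable incomparable family. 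Applying Lemma~\ref{old 3.4} with ambient ring $\overline T$, subset $\{0\}$, family $C$, and ideal $\mathfrak m_{\overline T}$ (not contained in any member of $C$) produces a nonzero $x\in\mathfrak m_{\overline T}$ lying in no member of $C$. Let $P_1$ be the preimage in $T$ of a minimal prime of $x\overline T$; by Krull's principal ideal theorem $\text{ht}(P_1/Q_i)=1$, so $Q_i\subsetneq P_1$, and $P_1\not\subseteq P'$ for every $P'\in\C$ (if $Q_i\subseteq P'$ this is the avoidance of $C$; if $Q_i\not\subseteq P'$ it is automatic since $Q_i\subseteq P_1$). By catenariness $\cht(P_1)=c_i-1$, so $Q_i\subsetneq P_1$ extends to a saturated chain $Q_i\subsetneq P_1\subsetneq\cdots\subsetneq P_{c_i-1}\subsetneq M$ of length $c_i$ in $T$, and each $P_j$ with $j\ge 1$ contains $P_1$ and hence lies in no element of $\C$. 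Contracting to $A$: by the bijection of Theorem~\ref{main theorem}(ii), $P_1\cap A,\dots,P_{c_i-1}\cap A, M\cap A$ are pairwise distinct primes of $A$ not lying under any minimal prime of $A$, so none equals $Q_i\cap A$; therefore $Q_i\cap A\subsetneq P_1\cap A\subsetneq\cdots\subsetneq P_{c_i-1}\cap A\subsetneq M\cap A$ has length $c_i$, which is (iv).

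\emph{Main obstacle.} The delicate point is the ``furthermore'': the construction underlying Theorem~\ref{main theorem} gives no a priori control over how chains in $T$ meet the family $\C$, so the height-one prime $P_1$ over $Q_i$ that escapes every element of $\C$ must be manufactured by hand, and the essential tool is the countable prime-avoidance statement Lemma~\ref{old 3.4}. A secondary point requiring care is squeezing both inequalities $\ell\le c_i$ and $\ell\ge c_i$ in (i) simultaneously out of Lemmas~\ref{chain upper bound} and~\ref{chain length} together with the catenariness of $T$.
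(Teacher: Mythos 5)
Your argument is correct, but it departs from the paper's at assertion (ii) and at the ``furthermore.'' For (ii) the paper stays inside chain combinatorics: Lemma~\ref{chain lower bound} applied to a maximal chain from a suitably chosen $Q$ gives $\cht(q)\ge\cht(Q)$, and Lemma~\ref{chain upper bound} gives the reverse. You sidestep this entirely with dimension theory, writing $\cht(q)=\dim(A/q)=\dim(T/qT)$ and identifying $\Min(qT)=\{Q\in\Min(T):Q\cap A=q\}$ via Lemma~\ref{prelim}; this is shorter and cleaner. For the ``furthermore,'' the paper is much more economical than you are: it takes any saturated chain $Q_i\subsetneq P_1\subsetneq\cdots\subsetneq M$ of length $c_i$, applies Lemma~\ref{chain lower bound}, and then uses the bijection in conclusion (ii) together with Lemma~\ref{General Poset Property} to conclude the contracted chain is saturated of length $c_i$. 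You instead hand-build the $T$-chain so that every member above $Q_i$ avoids every element of $\C$, producing the height-one prime over $Q_i$ by countable prime avoidance (Lemma~\ref{old 3.4}) in $T/Q_i$ and extending by catenariness; the bijection then forces the contractions to be distinct automatically. Your route is longer, but it has the virtue of literally producing a chain starting at $Q_i$ as the statement requests (the paper's $P_0'$ from Lemma~\ref{chain lower bound} is only guaranteed to satisfy $P_0'\cap A=Q_i\cap A$), and it avoids Lemma~\ref{chain lower bound} altogether. Your proof of (i) is essentially the paper's: both use Lemma~\ref{chain upper bound} and catenariness to get $\ell\le c_i$; for $\ell\ge c_i$ you apply Lemma~\ref{chain length} to a saturated refinement of the lift, where the paper invokes Lemma~\ref{chain lower bound}, but the latter is just an iterated form of the former, so this is the same idea.
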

\begin{proof}
Let the chain $P_0 \cap A \subsetneq \ldots \subsetneq P_{m - 1} \cap A \subsetneq M \cap A$ where $P_i \in \Spec(T)$ and $P_0 \cap A \in \Min(A)$ be a saturated chain in $A$ of length $m$. 
We will show that $m = c_i$ for some $i$. By Lemma \ref{chain upper bound}, there is a chain $P_0' \subsetneq \ldots \subsetneq P_{m - 1}' \subsetneq M$ of prime ideals of $T$ of length $m$ such that $P_0' \in \Min(T)$ and $P_j' \cap A = P_j \cap A$ for all $j$. 
This chain is contained in a saturated chain $P_0' \subsetneq \ldots \subsetneq P_{c_i - 1}' \subsetneq M$, which must have length $c_i \geq m$ for some $i$ because $P_0' \in \Min(T)$ and $T$ is catenary. 
Then the chain $P_0' \cap A \subseteq \ldots \subseteq P_{c_i - 1}' \cap A \subsetneq M \cap A$ contains the chain $P_0 \cap A \subsetneq \ldots \subsetneq P_{m - 1} \cap A \subsetneq M \cap A$ and is contained in a chain of length $c_i$ by Lemma \ref{chain lower bound}. 
But $P_0 \cap A \subsetneq \ldots \subsetneq P_{m - 1} \cap A \subsetneq M \cap A$ is saturated, so $m = c_i$.

Now let $q \in \Min(A)$ and let $Q \in \Min(T)$ be such that $Q \cap A = q$ and $Q$ has maximal coheight among the elements of the formal fiber of $q$. 
Let $\cht(Q) = k$. Then there exists a chain $Q = P_0 \subsetneq \ldots \subsetneq P_{k - 1} \subsetneq M$ in $T$ of length $k$. 
Then by Lemma \ref{chain lower bound}, there exists a chain $P_0' \subsetneq \cdots \subsetneq P_{k-1}' \subsetneq M$ in $T$ such that the chain $q = P_0' \cap A \subsetneq \ldots \subsetneq P_{k - 1}' \cap A \subsetneq M \cap A$ has length $k$, so that $\cht(q) \geq k$. 
Now assume that $\cht(q) > k$. Then by Lemma \ref{chain upper bound}, there exists a chain $P_0' \subsetneq \ldots \subsetneq P_{l - 1}' \subsetneq M$ of length $\cht(q)$, such that $P_0' \in \Min (T)$ and $P_0' \cap A = q$ so that $\cht(P_0') > k$, a contradiction. Thus $\cht(q) = k = \cht(Q)$.

For the second claim, note that the ring constructed in Theorem \ref{main theorem} already satisfies properties (i)-(iii), and the bijection in (ii) implies (iv) as follows: Let $Q_i \in \Min(T)$. $Q_i$ has coheight $c_i$ so there exists a chain $Q_i \subsetneq P_1  \subsetneq \ldots \subsetneq P_{c_i-1} \subsetneq M$ of prime ideals of $T$ of length $c_i$. Then by Lemma \ref{chain lower bound} there exists a chain $P_0' \subsetneq P_1'  \subsetneq \ldots \subsetneq P_{c_i-1}' \subsetneq M$ of prime ideals of $T$ such that $P_0' \cap A = Q_i \cap A$ and $P_i' \cap A = P_i \cap A$ for all $i$ and such that $P_0' \cap A \subsetneq P_1' \cap A \subsetneq \ldots \subsetneq P_{c_i-1}' \cap A \subsetneq M \cap A$ is a chain of length $c_i$. Then the bijection in (ii) along with Lemma \ref{General Poset Property} implies that the chain is saturated.
\end{proof}

As in the last section, we can preserve chains and construct a countable precompletion of $T$. We will state this corollary generally for all cardinalities, but it can be simplified to the case where $A$ is countable.

\begin{cor} \label{countable chain thm}
Let $(T, M)$ be a complete local ring.
Let $\P = (\C, \{ \C_i \}_{i = 1}^{m})$ be a minfeasible partition and define 
\[
c_i = \textup{max}(\{\textup{\cht}(Q) \, | \, Q \textup{ is under }\C_i\})
\]
If $T$ satisfies the conditions of Remark \ref{conditions}, then there exists a local subring $A$ of $T$ such that $\widehat{A} = T$ and:
\begin{enumerate}[label={(\roman*)}]
    \item If $|T/M|$ is infinite, then $|A| = |T/M|$. If $|T/M|$ is finite, then $A$ is countable.
    \item $\Min(A) = \{ \C_1 \cap A, \ldots, \C_m \cap A \}$
    \item For all $X \subseteq \{ 1, \ldots, m \}$ with $|X| \ge 2$, there is a bijection between $\S_T^1(X)$ and $\S_A^1(X)$.
    \item $\C_i \cap A$ has coheight $c_i$ for all $i$, so that, in particular, $A$ has maximal chains of prime ideals of length $c_i$ for all $i$.
    \item If $\C_i \cap A \subsetneq ... \subsetneq M \cap A$ is a maximal chain of prime ideals of $A$, then its length is equal to $\cht(Q)$ for some minimal prime ideal $Q$ under $\C_i$. 
\end{enumerate}
\end{cor}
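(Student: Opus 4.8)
The plan is to let $A$ be the ring produced by the construction in the proof of Theorem \ref{countable case} (which already starts from the base ring of Lemma \ref{countable set-up}), and then to verify the two chain conditions (iv) and (v) on top of the properties it is already known to satisfy. Items (i), (ii) and (iii) of the corollary are precisely the conclusion of Theorem \ref{countable case}, so no new work is needed for them; everything reduces to (iv) and (v), for which we only use that $\widehat A = T$, that $\Min(A)=\{\C_1\cap A,\dots,\C_m\cap A\}$, and that $A$ contains a JMT subring $R_0$ (so that $\C_i\cap A=\C_j\cap A$ forces $\C_i\cap R_0=\C_j\cap R_0$, hence $i=j$ by the converse of condition (iii) of Definition \ref{Min-T subring}). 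It follows that for $Q'\in\Min(T)$ the condition $Q'\cap A=\C_i\cap A$ is equivalent to ``$Q'$ is under $\C_i$'': indeed $Q'$ is under some $\C_k$ by Definition \ref{minfeasible}, so $\C_k\cap A=Q'\cap A=\C_i\cap A$ and thus $k=i$.

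For (iv), fix $i$. For the lower bound, I would choose $Q$ under $\C_i$ with $\cht(Q)=c_i$ and a saturated chain $Q=P_0\subsetneq\cdots\subsetneq P_{c_i-1}\subsetneq M$ in $T$; feeding this chain to Lemma \ref{chain lower bound} yields a chain in $T$ whose bottom still contracts to $\C_i\cap A$ and whose image $\C_i\cap A\subsetneq\cdots\subsetneq M\cap A$ in $A$ has length $c_i$, so $\cht(\C_i\cap A)\ge c_i$. For the upper bound, take any saturated chain of $A$ of length $\ell$ starting at $\C_i\cap A$ and lift it by Lemma \ref{chain upper bound} to a chain of length $\ell$ in $T$ starting at some $Q'\in\Min(T)$ with $Q'\cap A=\C_i\cap A$, so $Q'$ is under $\C_i$; since $T$ is catenary, $\ell\le\cht(Q')\le c_i$. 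Hence $\cht(\C_i\cap A)=c_i$, and any chain of that length from $\C_i\cap A$ to $M\cap A$ cannot be refined, so it is a maximal chain of length $c_i$.

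For (v), let $\C_i\cap A\subsetneq P_1\cap A\subsetneq\cdots\subsetneq P_{\ell-1}\cap A\subsetneq M\cap A$ be a maximal chain in $A$ with each $P_j\in\Spec(T)$. Lift it by Lemma \ref{chain upper bound} to a chain of length $\ell$ in $T$ from some $Q'\in\Min(T)$ under $\C_i$, and extend that to a saturated chain $\mathcal C_T$ from $Q'$ to $M$; since $T$ is catenary, $\mathcal C_T$ has length $\cht(Q')$, so $\ell\le\cht(Q')$. Applying Lemma \ref{chain lower bound} to $\mathcal C_T$ shows that the image of $\mathcal C_T$ in $A$ --- which contains the original chain --- lies inside a saturated chain of $A$ of length at least $\cht(Q')$; since the original chain is already saturated it must equal that chain, forcing $\ell=\cht(Q')$. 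Thus every maximal chain through $\C_i\cap A$ has length $\cht(Q')$ for some $Q'$ under $\C_i$.

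The cardinality and spectrum parts are inherited verbatim from Theorem \ref{countable case}, so the only genuinely delicate point is (v): one must correctly interleave Lemma \ref{chain upper bound} (to realize the chain upstairs), the catenary property of $T$ (to bound $\ell$ above by $\cht(Q')$), and Lemma \ref{chain lower bound} (to bound $\ell$ below by $\cht(Q')$), carefully tracking which primes are preserved at each step. This is essentially the chain argument in the proof of Theorem \ref{chain preservation thm}, run here without the dimension $\ge 2$ hypothesis, which is not used in those parts.
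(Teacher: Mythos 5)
Your proof is correct and follows essentially the same approach as the paper's. The paper disposes of (i)--(iii) exactly as you do, via Theorem \ref{countable case}, and then obtains (iv) and (v) simply by citing Theorem \ref{chain preservation thm} (part (ii) of its first half gives $\cht(\C_i\cap A)=\max\{\cht(Q)\mid Q\in\Min(T),\,Q\cap A=\C_i\cap A\}=c_i$, and part (i) gives (v)); you instead unroll that theorem's chain argument directly from Lemmas \ref{chain upper bound} and \ref{chain lower bound} and catenarity of $T$. Your observation that this sidesteps the $\dim T\geq 2$ hypothesis in Theorem \ref{chain preservation thm} --- which the corollary does not impose, since Remark \ref{conditions} only requires $\dim T\geq 1$ --- is a legitimate refinement over the paper's one-line citation, though in dimension one the corollary is nearly vacuous anyway, so the gap in the paper's argument is harmless.
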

\begin{proof}
Use the minfeasible partition above and Theorem \ref{countable case} to construct a precompletion $A$ of $T$ that satisfies conditions (i)-(iii).
Then by
Theorem \ref{chain preservation thm}, every minimal prime ideal $\C_i \cap A$ of $A$ has coheight $c_i$, proving (iv).
Condition (v) follows from Theorem \ref{chain preservation thm}. 
\end{proof}

\section*{Acknowledgements}
We would like to thank the National Science Foundation (grant DMS-1659037); Williams College; and the Clare Boothe Luce Program for providing funding for this research.

\bibliographystyle{plain}
\bibliography{references}

\end{document}